\newcommand{\mb}{\mathbf}
\newcommand{\mc}{\mathcal}
\renewcommand{\Re}{\mathrm{Re}\,}
\renewcommand{\Im}{\mathrm{Im}\,}
\newcommand{\N}{\mathbb{N}}
\newcommand{\R}{\mathbb{R}}
\newcommand{\C}{\mathbb{C}}
\newcommand{\B}{\mathbb{B}}
\DeclareMathOperator{\supp}{supp}
\DeclareMathOperator{\range}{rg}
\newtheorem{lemma}{Lemma}[section]
\newtheorem{theorem}[lemma]{Theorem}
\newtheorem{proposition}[lemma]{Proposition}
\theoremstyle{remark}
\newtheorem{remark}[lemma]{Remark}
\theoremstyle{definition}
\newtheorem{definition}[lemma]{Definition}
\numberwithin{equation}{section}
\title[]{Co-Dimension One Stable Blowup for the Quadratic Wave Equation Beyond the light cone}
\author{Po-Ning Chen}
\address{University of California, Riverside, 900 University Ave, Riverside, CA 92521}
\email{poningc@ucr.edu}
\author{Roland Donninger}
\address{Universit\"at Wien, Fakult\"at f\"ur Mathematik, Oskar-Morgenstern-Platz 1, 1090 Vienna, Austria}
\email{roland.donninger@univie.ac.at}
\author{Irfan Glogi\'c}
\address{Universit\"at Wien, Fakult\"at f\"ur Mathematik, Oskar-Morgenstern-Platz 1, 1090 Vienna, Austria}
\email{irfan.glogic@univie.ac.at}
\author{Michael McNulty}
\address{Michigan State University, 619 Red Cedar Road, East Lansing, MI 48824}
\email{mcnult50@msu.edu}
\author{Birgit Sch\"orkhuber}
\address{Universit\"at Innsbruck, Institut f\"ur Mathematik, Technikerstrasse 13, 6020 Innsbruck, Austria}
\email{Birgit.Schoerkhuber@uibk.ac.at}
\thanks{P.C. is supported by the Simons Foundation collaboration award \#584785. R.D. acknowledges support by the Austrian Science Fund FWF via Projects P 30076 and P 34560. I.G. acknowledges support by the Austrian Science Fund FWF, Projects P 30076 and P 34378.}
\begin{document}
\begin{abstract}
We study the stability of an explicitly known, non-trivial self-similar blowup solution of the quadratic wave equation in the lowest energy supercritical dimension $d=7$. This solution blows up at a single point and extends naturally away from the singularity. By using hyperboloidal similarity coordinates, we prove the conditional nonlinear asymptotic stability of this solution under small, compactly supported radial perturbations in a region of spacetime which can be made arbitrarily close to the Cauchy horizon of the singularity. To achieve this, we rigorously solve the underlying spectral problem and show that the solution has exactly one genuine instability. The unstable nature of the solution requires a careful construction of suitably adjusted initial data at $t= 0$, which, when propagated to a family of spacelike hypersurfaces of constant hyperboloidal time, takes the required form to guarantee convergence. By this, we introduce a new canonical method to investigate unstable self-similar solutions for nonlinear wave equations within the framework of hyperboloidal similarity coordinates. 
\end{abstract}

\maketitle

	\section{Introduction}
		This paper concerns the radial quadratic wave equation
			\begin{equation}
						\Big(\partial_t^2-\partial_r^2-\frac{d-1}{r}\partial_r\Big)u(t,r)=u(t,r)^2  \label{qwe}
			\end{equation}
for $(t,r)\in I\times[0,\infty)$, $I\subset\mathbb R$ an interval containing zero, and $r=|x|$ for $x\in\mathbb R^d$. Equation \eqref{qwe} exhibits the scaling symmetry $u\mapsto u_\lambda$,
			$$
				u_\lambda(t,r):=\lambda^{-2}u(t/\lambda,r/\lambda)
			$$
		for any $\lambda>0$. This rescaling leaves invariant the energy norm $\dot{H}^1(\mathbb R^d)\times L^2(\mathbb R^d)$ precisely when $d=6$ which defines the energy critical case.  Equation ~\eqref{qwe} exhibits finite-time blowup in all space dimensions as is obvious from the existence of the ODE blowup solution
 \begin{equation}\label{Def:ODEblowup}
	u_T^{ODE}(t,r) := \frac{6}{(T-t)^2}, \quad T>0,
  \end{equation} 
which is known to be stable under small perturbations locally in backward light cones, see
 \cite{DS2017}, \cite{CGS21}. Remarkably, there is an explicit non-trivial radial self-similar solution that exists in all supercritical dimensions $d\geq7$ and is given by
			\begin{equation}
				u_T^*(t,r):=\frac{1}{(T-t)^2}U\Big(\frac{r}{T-t}\Big),\quad U(\rho)=\frac{c_1 - c_2 \rho^2}{(c_3 + \rho^2)^2}, \label{ss soln}
			\end{equation}
for $T >0$ 	with
						\begin{align*}
c_1=\frac{4}{25}((3d-8)d_0+8d^2-56d+48), \quad c_2=\frac{4}{5} d_0,  \quad c_3=\frac{1}{15}(3d-18+d_0),
\end{align*}
and $d_0 =\sqrt{6(d-1)(d-6)}$.
This solution was recently introduced in \cite{CGS21} by Csobo, Glogi\'c, and Sch\"orkhuber, who established in $d=9$ its \textit{conditional asymptotic stability} without symmetry assumptions  locally in backward light cones. Rearranging the right hand side of Equation~\eqref{ss soln} yields
\begin{align}\label{Sol_contin}
	u_T^*(t,r)=\frac{c_1(T-t)^2-c_2 r^2}{\big(r^2+c_3(T-t)^2\big)^2},
\end{align}
from which it is evident that $u^{*}_T$ is well-defined for all $(t,r) \in \R \times [0,\infty)$, $(t,r)  \neq (T,0)$. Hence, in contrast to the ODE blowup and localized versions of it,
$u_T^*$ extends naturally past the blowup time and converges to zero for $t \to \infty$ in a self-similar manner. 

As the result of \cite{CGS21} is local in nature, it leaves completely open the evolution of perturbations of $u_T^*$ outside of the backward light cone and in particular, past the blowup time. We address these questions in the lowest supercritical dimension, $d=7$, where the profile in Equation~\eqref{ss soln} is given by 
 \[ U(\rho) = \frac{24(21 - 5 \rho^2)}{(3+5 \rho^2)^2}. \]
Furthermore, we restrict ourselves to the radial case. The key ingredient allowing us to access a larger region of spacetime is the coordinate system called \textit{hyperboloidal similarity coordinates}. These were first introduced by Biernat, Donninger and Sch\"orkhuber in \cite{BDS19} for the investigation of stable blowup in wave maps outside of backward light cones in $d=3$. Recently, the framework has been generalized to higher odd space dimensions by Donninger and Ostermann in \cite{DO21}. Hyperboloidal similarity coordinates are well-adapted to self-similarity much like standard similarity coordinates typically used in the study of self-similar blowup. However, they have the significant advantage that they cover regions of spacetime past the blowup time. This property is precisely what we utilize to access this larger region of spacetime.
%We emphasize that in our analysis, we can rigorously solve the underlying spectral problem and thus extend the corresponding results of \cite{CGS21} to $d=7$.

				\subsection{Hyperboloidal Similarity Coordinates}
					In this paper, we consider \textit{radial, hyperboloidal similarity coordinates}. Namely, given $T>0$, we define the map
				\begin{align*}
					\eta_T:\mathbb R\times[0,\infty)&\to\mathbb R\times[0,\infty)
					\\
					(s,y)&\mapsto\big(T+e^{-s}h(y),e^{-s}y\big)
				\end{align*}
			where
				$$
					h(y)=\sqrt{2+y^2}-2
				$$
is referred to as the \textit{height function}. We remark that $\eta_T$ defines a diffeomorphism onto its image. The specific form of $h$ is arbitrary except for the fact that the level sets
				$$
					\{(s,y)\in\mathbb R\times[0,\infty):s=c\},\;c\in\mathbb R,
				$$
			are Cauchy surfaces which asymptote to forward light cones. Note that for the above choice of height function, $y=\frac{1}{2}$ corresponds to backward light cones. It is precisely due to the nontrivial nature of the height function that the coordinates cover a region of spacetime outside of the backward light cone of $(T,0)$, see Figure \ref{HSC_Spacetime_Diagram}. Observe that taking $h(y)=-1$ returns standard similarity coordinates.
			
				\begin{figure}
					\includegraphics[scale=0.8]{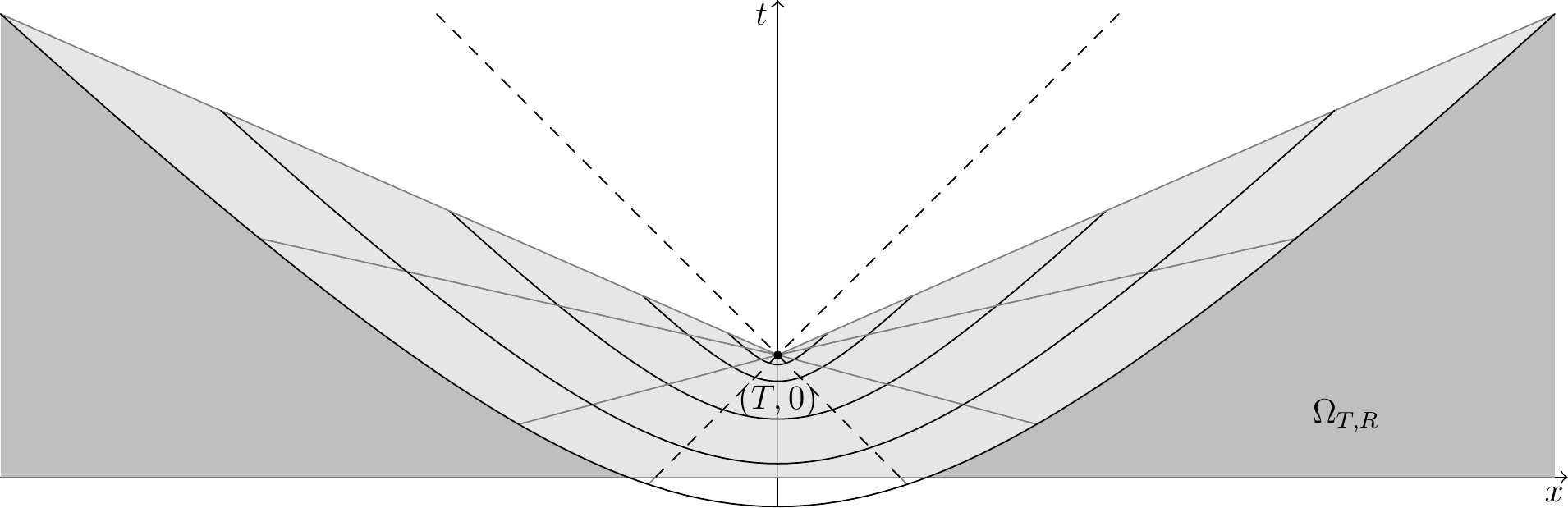}
					\centering
					\caption{A spacetime diagram depicting hyperboloidal similarity coordinates and the region $\Omega_{T,R}$. The dashed lines are the boundary of the forward and backward light cones of the spacetime point $(T,0)$. The hyperboloids correspond to level sets of $s$ and the rays emanating from $(T,0)$ correspond to level sets of $y$. The shaded region depicts the spacetime region $\Omega_{T,R}$. As $R$ increases, the upper portion of the boundary approaches the forward light cone of $(T,0)$. Within the light grey region we have convergence of the solution $u$ to $u_T^*$ according to Equation \eqref{convergence}.}
					\label{HSC_Spacetime_Diagram}
				\end{figure}
			
				\subsection{Statement of the Main Result} \label{Statement of the Main Result}
We are now ready to state our main result, which establishes the conditional asymptotic stability of $u_T^*$ in hyperboloidal similarity coordinates in $d=7$. More precisely, we prove that for every smooth, small, and compactly supported perturbation $(f,g)$ of the blowup initial data $u_1^*[0] = (u_1^*(0,\cdot), \partial_0 u_1^*(0,\cdot))$, there is a correction in terms  of fixed functions $(Y_1,Y_2)$, such that the corresponding solution $u$ blows up at $(T,0)$ for some $T >0$, exists as a smooth function in the complement of the forward light cone of $(T,0)$, and converges to $u_T^*$ on hyperboloidal time-slices as $s \to \infty$. For the precise statement, we define for $R \geq \frac12$ and $T > 0$,
\begin{align}\label{Def:OmegaTR}
							\Omega_{T,R}:=\{(t,x)\in\mathbb R\times \R^7 :0\leq t<T+ \tfrac{h(R)}{R}|x| \},
\end{align}
see Figure \ref{HSC_Spacetime_Diagram}.	We note that for large values of $R$, this region extends arbitrarily close to the forward light cone of $(T,0)$.

\begin{theorem} \label{stability on whole space}
Let $d=7$ and fix $R\geq\frac{1}{2}$. There exist positive constants $\delta,r_0,M_0,\omega_0$ and a fixed pair of radial functions $\big(Y_1,Y_2\big)\in C^{\infty}(\R^7)^2$ such that the following holds. For any pair of radial functions $(f,g)\in C^{\infty}(\R^7)^2$ supported on $\B^7_{r_0}$ and satisfying
								$$
									\|(f,g)\|_{H^{10}(\mathbb R^7)\times H^9(\mathbb R^7)}\leq\frac{\delta}{M_0^2},
								$$
							there exists $\alpha\in[-\frac{\delta}{M_0},\frac{\delta}{M_0}]$, $T\in[1-\frac{\delta}{M_0},1+\frac{\delta}{M_0}]$, and a unique solution $u\in C^\infty(\Omega_{T,R})$ to 
								\begin{equation}
								\begin{cases}
									\big(\partial_t^2-\Delta_x \big)u(t,x)=u(t,x)^2&(t,x)\in\Omega_{T,R},
									\\
									u(0,x)=u_1^*(0,|x|)+f(x)+\alpha Y_1(x) & x \in \R^7,
									\\
									\partial_0u(0,x)=\partial_0u_1^*(0,|x|)+g(x)+\alpha Y_2(x) &x\in  \R^7.
								\end{cases} \label{qwe adjusted CP}
								\end{equation}
Moreover, $u$ is radial, it blows up at $(T,0)$ and converges to $u_T^*$ in the sense that 
								\begin{align}
								\begin{split}
									e^{-2s}\|(\tilde u - u_T^*) \circ\eta_T(s,|\cdot|)\|_{H^6(\mathbb B^7_R)}\leq\delta e^{-\omega_0s}
									\\
									e^{-2s}\|\partial_s(\tilde u - u_T^*) \circ\eta_T(s,|\cdot|)\|_{H^5(\mathbb B^7_R)}\leq\delta e^{-\omega_0s} \label{convergence}
								\end{split}
								\end{align}
	for all $s\geq0$, where $u= \tilde u(|\cdot|)$.

				\end{theorem}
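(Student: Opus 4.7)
\emph{Strategy.} The plan is to reformulate the perturbed Cauchy problem~\eqref{qwe adjusted CP} in hyperboloidal similarity coordinates, linearize around the profile $u_T^*$, carry out a rigorous spectral analysis of the resulting generator, and close a nonlinear Banach fixed point argument on the stable subspace while using the blowup time $T$ and the parameter $\alpha$ as two modulation parameters to cancel the unstable directions.

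\emph{Reformulation and semigroup generation.} First, I would introduce the rescaled perturbation $\phi(s,y) := e^{-2s}\tilde u \circ \eta_T(s,y) - U(y)$. A direct computation then shows that $\Phi := (\phi, \partial_s \phi)$ satisfies an abstract semilinear Cauchy problem $\partial_s \Phi = \tilde L \Phi + \mb{N}(\Phi)$ on the Hilbert space $\mc{H} := H^6(\B^7_R) \times H^5(\B^7_R)$, where $\tilde L$ is the hyperboloidal free-wave generator plus the multiplication potential $2U$ and $\mb{N}$ is quadratic. Following the framework of \cite{BDS19, DO21}, the next step is to verify that $\tilde L$ generates a strongly continuous semigroup $\{\mb{S}(s)\}_{s\geq 0}$ on $\mc{H}$; this is obtained via Lumer-Phillips in an equivalent inner product that exploits the outflow character of the boundary $\{|y| = R\}$, so that boundary contributions in the energy estimate carry the correct sign.

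\emph{Spectral analysis.} The crucial and most delicate step is to determine $\sigma(\tilde L) \cap \{\Re \lambda \geq -\omega_0\}$. The eigenvalue equation $(\lambda - \tilde L)\mb{f} = 0$ reduces to a scalar second-order ODE on $[0,R]$ with regular singular points at the origin and at the backward light cone value $y = \tfrac{1}{2}$. Frobenius theory at each singular point selects a one-parameter family of admissible local solutions, and existence of an $\mc{H}$-valued eigenfunction becomes the vanishing of an entire connection coefficient $W(\lambda)$. The eigenvalue $\lambda = 1$, associated to time translation with eigenfunction obtained by rewriting $\partial_T u_T^*|_{T=1}$ in hyperboloidal coordinates, is identified explicitly as a zero of $W$. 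I must then prove that $W$ has exactly one further zero $\lambda_\star$ with $\Re\lambda_\star > -\omega_0$ and $\lambda_\star \neq 1$, with eigenfunction $\mb{g}_\star$. I expect this counting argument to be the main obstacle: large $|\lambda|$ zeros will be excluded by a WKB-type asymptotic analysis of $W$ combined with a perturbative bound against the free-wave resolvent, while zeros in a bounded region of the right half-plane must be ruled out by an exploitation of the hypergeometric structure of the spectral ODE in the spirit of \cite{CGS21}.

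\emph{Linear decay, modulation, and nonlinear closure.} Given the spectral picture, define the rank-two Riesz projection $P_u$ onto the unstable subspace, and use uniform resolvent estimates on vertical lines to derive the decay bound $\|\mb{S}(s)(I - P_u)\|_{\mc{H} \to \mc{H}} \leq M e^{-\omega_0 s}$. The nonlinear problem is then recast via Duhamel in the weighted space $\{\Phi : \sup_{s \geq 0} e^{\omega_0 s}\|\Phi(s)\|_{\mc{H}} \leq \delta\}$. A key subtlety is that initial data is prescribed at $t = 0$ rather than on the hyperboloid $\{s = 0\}$; I would therefore first solve the Cauchy problem on the transition region between $t = 0$ and $\eta_T(\{0\}\times[0,R])$ by classical energy methods, producing a $(T,\alpha,f,g)$-dependent initial datum $\Phi_0 \in \mc{H}$. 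Choosing the pair $(Y_1, Y_2)$ so that the map $(T, \alpha) \mapsto P_u \Phi_0$ is a local diffeomorphism at $(T,\alpha,f,g) = (1,0,0,0)$, an implicit function theorem argument selects $T$ and $\alpha$ in the prescribed intervals with $P_u \Phi_0 = 0$. A Banach fixed point on the stable complement then yields a unique global-in-$s$ solution satisfying \eqref{convergence}, and $C^\infty$ regularity on $\Omega_{T,R}$ follows from the hyperboloidal estimates combined with local well-posedness on the transition region.
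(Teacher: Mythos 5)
Your high-level outline — reformulation in hyperboloidal coordinates, semigroup generation, spectral analysis, and a modulated nonlinear fixed point — matches the skeleton of the paper. But two of the central difficulties, which in fact constitute the main novelties of this work, are glossed over in a way that would leave genuine gaps.

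\textbf{The adjustment pair $(Y_1,Y_2)$.} You write ``choosing the pair $(Y_1,Y_2)$ so that the map $(T,\alpha)\mapsto P_u\Phi_0$ is a local diffeomorphism'' as if this were a matter of linear algebra. It is not. The natural candidate — the physical-space mode $F_4^*$ generating the genuine instability — is \emph{not} compactly supported, so adding $\alpha F_4^*[0]$ to the initial data produces a perturbation whose physical evolution cannot be confined to a ball on a hyperboloid, defeating the transition from the $t=0$ slice to the hyperboloidal slice $\{s = s_0\}$. A naive cutoff $\chi F_4^*[0]$ destroys the property of solving the linearized equation, so you lose the uniform derivative control needed to run the implicit-function argument, and you also cannot cheaply compute the projection of the resulting hyperboloidal datum onto the unstable mode. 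The paper resolves this by a two-step construction: first evolve the truncated datum $\chi_{t_0}\mathbf f_4^*$ \emph{forward in hyperboloidal time} via the linearized semigroup, then exploit finite speed of propagation to show the corresponding physical solution has compact support at $t=0$, extend by zero, and re-solve the linearized Cauchy problem in physical coordinates to obtain a single smooth solution $u_{\text{lin}}$ with $(Y_1,Y_2)=u_{\text{lin}}[0]$. On top of this, one still needs the nonvanishing of $(\mathbf P_4(\chi_{t_0}\mathbf f_4^*)|\mathbf f_4^*)$, proved by an explicit variation-of-parameters computation showing a definite-sign integral (Lemma 5.3). None of this is captured by your sketch, and without it the implicit-function/Brouwer argument has nothing to run on.

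\textbf{The spectral analysis.} Your plan to exclude unstable spectrum via a connection-coefficient function $W(\lambda)$, large-$|\lambda|$ WKB asymptotics, and a perturbative resolvent bound is a genuinely different route from what the paper does, and it is far from clear it would close. The spectral ODE here is \emph{not} hypergeometric (it has more than three finite singular points), so the ``hypergeometric structure in the spirit of \cite{CGS21}'' does not directly apply — the favorable algebraic structure exploited in \cite{CGS21} was tied to $d=9$ and conformal invariance, as the paper itself stresses. Instead the paper passes to standard similarity coordinates inside the light cone, performs \emph{two} supersymmetric factorizations to strip off the eigenvalues $\lambda=1$ and $\lambda=4$, reduces the result to a Heun-type equation, and excludes analytic solutions via a quasisolution analysis of the Frobenius recurrence, with a non-canonical quasisolution whose lower-order corrections are essential (Remark 3.5). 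This is the technical heart of the proof; a WKB-plus-resolvent-bound strategy would need to be spelled out and would almost certainly run into the same bounded-region counting problem that the paper addresses by other means.

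The remaining pieces of your outline — using \cite{DO21} for semigroup generation (rather than re-deriving Lumer-Phillips), the Riesz projection decay via the compactness of $\mathbf L'$ and the spectral mapping theorem, the Duhamel fixed point in an exponentially weighted space, and upgrading regularity to conclude $u\in C^\infty(\Omega_{T,R})$ — are consistent with the paper's Sections 3, 4, and 6 and are correct in spirit.
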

				
Equation \eqref{convergence} implies that the convergence takes place on shrinking hyperboloidal time-slices. These hyperboloids foliate the compact region depicted in Figure \ref{HSC_Spacetime_Diagram}. We note that
\begin{align*}
(u_T^*\circ\eta_T)(s,y)=\frac{24e^{2s}\big(21h(y)^2 - 5y^2\big)}{\big(3h(y)^2+5y^2\big)^2}.
\end{align*} 
Hence, the normalizing factor in Equation \eqref{convergence} appears naturally and reflects the behavior of the blowup solution itself. Our techniques necessitate the use of various Sobolev embeddings and this imposes the high degree of regularity appearing in Theorem \ref{stability on whole space}.

To put our result into perspective, we note that compared to
 \cite{BDS19}, \cite{DO21} which prove \textit{stable} blowup in hyperboloidal similarity coordinates for related equations, see Section \ref{Sec:FurhterResults}, the unstable nature of $u_T^*$ introduces new difficulties.  First, in order to  quantify the degree of instability of $u_T^*$, we rigorously analyze the underlying spectral problem in hyperboloidal similarity coordinates. In fact, we prove that apart from an instability induced by time-translation invariance, there is exactly one \textit{genuine} unstable direction when perturbing around $u_T^*$. The second problem concerns the design of a suitable correction of the initial data at $t = 0$, namely the pair of radial functions $(Y_1,Y_2)$ in Theorem \ref{stability on whole space},  which is able to account for this instability when being propagated to an initial hyperboloidal time-slice. These two aspects comprise the main novelties of the paper and will be elaborated on in more detail in the following remarks.

\begin{remark}\label{Rem::Codimension}
By adapting the techniques of \cite{CGS21}, one can prove a genuine co-dimension two stability result. One of these co-dimensions is due to time translation invariance of the equation while the other has no connection to any symmetry. In this way, it is possible to prove a \textit{co-dimension one, modulo symmetries} stability result. However, this requires one to restrict themselves to evolve data specified on a fixed hyperboloid in spacetime. We aim to instead prove a conditional stability result starting with data specified at $t=0$. The condition for stability is that an arbitrary perturbation needs to be suitably adjusted along a one-dimensional subspace spanned by $(Y_1,Y_2)$.
\end{remark}

\begin{remark}\label{Rem:Spec}
\textit{The spectral problem and the extension of the result to higher space dimensions.}
The spectral problem underlying the stability analysis of non-trivial self-similar solutions is very delicate, particularly in the presence of genuine instabilities. In \cite{CGS21}, perturbations of $u^*_T$ were analyzed in $d=9$ using \textit{local self-similar coordinates}, which cover the backward light cone of the blowup point. The choice of the space dimension was by no means arbitrary; by exploiting the conformal invariance of the linearized equation, a genuine unstable eigenvalue, namely $\lambda^* = 3$, and its eigenfunction could be computed \textit{explicitly} from the time-translation symmetry mode,
see also \cite{GMS20}.  This allowed to rigorously exclude the existence of other unstable spectral points. In other  supercritical space dimensions, the situation is different and a similar approach does not work. In fact, our numerics indicate that $\lambda^*$ is real, larger than two and non-integer in general, except for $d=7$, where it appears that $\lambda ^*=4$. This is exactly what we verify in the present paper.  Moreover, even without an obvious underlying symmetry, we are able to find a closed form expression for the corresponding eigenfunction, see Remark \ref{Rem:Y} below. This allows us to extend the methods of  \cite{CGS21}, \cite{GS21} to rigorously solve the spectral problem in hyperboloidal similarity coordinates. We emphasize that the spectral analysis is the key difficulty in the extension of Theorem \ref{stability on whole space} to higher space dimensions. With the results of \cite{CGS21} it is straightforward for $d=9$.
\end{remark}

\begin{remark}\label{Rem:Y}
\textit{On the adjustment terms $(Y_1,Y_2)$.}
We first observe that the linearized equation, 
			\begin{align}\label{Eq:QWE_Linearized}
				\Big(\partial_t^2-\partial_r^2-\frac{d-1}{r}\partial_r\Big)u(t,r)=2u_T^*(t,r)u(t,r),
			\end{align}
		has the following two solutions given explicitly by
			\begin{align}\label{Eq:Symm_Mode}
				F_1^*(t,r)=\frac{(T-t)\big(7(T-t)^2-15r^2\big)}{\big(5r^2+3(T-t)^2\big)^3}
			\end{align}
		and
		\begin{align}\label{Eq:Unst_Mode}
				F_4^*(t,r)=\frac{1}{\big(5r^2+3(T-t)^2\big)^3}
		\end{align}
		for $(t,r)\neq(T,0)$. After we reformulate the problem in hyperboloidal similarity coordinates, we will see that these two solutions correspond to exponentially growing solutions of the linearized equation. By rigorously analyzing the underlying spectral problem, we prove that these are the only instabilities we have to consider, see Remark \ref{Rem:Spec} above. In the nonlinear evolution, they can be accounted for in a systematic way. First, notice that the existence of the first solution $F_1^*$ is precisely due to the time translation symmetry of Equation \eqref{qwe}. More precisely, observe that $\partial_Tu_T^*=-432F_1^*$. Consequently, we can account for this instability by suitably adjusting the blowup time. On the other hand, $F_4^*$ does not appear to have a connection with any spacetime symmetry and is a genuine instability of the blowup profile. Thus, one might expect that adjusting perturbations of $u_T^*$ by some multiple of $F_4^*$ could stabilize the evolution. Unfortunately, our techniques rely crucially on the perturbation having compact support and $F_4^*$ does not have this property. Multiplying by a cutoff could fix this issue, however, doing so carelessly poses major issues in controlling the evolution of data along hyperboloids. Hence, one of the main novelties of our work lies in the construction of a suitable correction in terms of functions $(Y_1,Y_2)$ which stabilize the evolution of data close to $u_T^*$. This construction requires a delicate interplay between the standard Cauchy evolution of data along hypersurfaces of constant physical time and that of data along hypersurfaces of constant hyperboloidal time. Before carrying out the construction, we will motivate and outline it in Section \ref{Construction of an Adjustment Term}.
\end{remark}

\subsection{Further results and discussion of related problems}\label{Sec:FurhterResults}
For Equation \eqref{qwe}, a family of self-similar solutions has been constructed by Dai and Duyckaerts \cite{DD21} in $d=7$,  with radial profiles being strictly positive and globally smooth.  However, the stability of these solutions is unknown. Also, we note that this family does not include $u_T^*$ as the profile $U$ obviously changes sign. In $d \geq 8$, our solution is so far the only non-trivial example of self-similar blowup for Equation \eqref{qwe}, although a similar result as in \cite{DD21} is expected to hold at least in a certain range of dimensions. Moreover, for $d$ sufficiently  large, the existence of non self-similar blowup solutions is anticipated, see the work of Collot \cite{Collot2018}.

As outlined above, our approach for proving conditional asymptotic stability of self-similar solutions using hyperboloidal similarity coordinates hinges critically on the spectral problem, which in turn reduces to the analysis of an ODE. Using the corresponding results of \cite{GS21} our methods can be used to obtain an analogue of Theorem \ref{stability on whole space} for the radial wave equation with cubic nonlinearity 
\begin{align}\label{Eq:NLW3}
\Big(\partial_t^2-\partial_r^2-\frac{d-1}{r}\partial_r\Big)u(t,r)=  u(t,r)^3,
\end{align}
in $d=7$ and the corresponding self-similar blowup solution
\begin{align*}
u_T^{cub}(t,r) = \frac{4(T-t)}{(T-t)^2+r^2},
\end{align*}
which is known to be conditionally (co-dimension one) stable in backward light cones.

We note that Equations \eqref{qwe} and \eqref{Eq:NLW3} can be viewed as toy models for the radial Yang-Mills Equation on $\R^{1,d}$ ,
\begin{align}\label{Eq:YM}
\Big(\partial_t^2-\partial_r^2-\frac{d+1}{r}\partial_r\Big)u(t,r)=  (d-4) u(t,r)^2  ( 3  - r^2 u(t,r)),
\end{align}
which is supercritical in $d \geq 5$, and for supercritical co-rotational wave maps from $\R^{1,d}$ to $\mathbb S^d$ for $d \geq 3$,  governed by 
\begin{align}\label{Eq:WM}
\Big(\partial_t^2-\partial_r^2-\frac{d+1}{r}\partial_r\Big)u(t,r)= u(t,r)^3 G(r,u(t,r)),
\end{align}
with $G$ a globally bounded function, non-zero at the origin. For both problems stable self-similar blowup is well-known, see \cite{BDS19} and \cite{DO21} for corresponding results in hyperboloidal similarity coordinates. More interestingly, unstable self-similar solutions have been observed  numerically as intermediate attractors close to the threshold for singularity formation, for Equation \eqref{Eq:YM} in $d=5$ by Bizo\'n \cite{Bizon2002} and for Equation \eqref{Eq:WM} in $3 \leq d \leq 6$ by Biernat, Bizo\'n, and Maliborski \cite{BiernatBizonMaliborski2016}. These \textit{critical} self-similar solutions, which are not known in closed form, are supposed to have exactly one genuine unstable direction. From an analytic point of view, the threshold problem is completely open for energy supercritical wave equations. This further motivates the study of simpler toy models such as Equation \eqref{qwe}. In fact, our results on the conditional stability of $u_T^*$ support the conjecture that this solution plays an important role in the study of threshold phenomena for Equation \eqref{qwe}.

\subsection{Notation and Conventions}
We denote by $\mathbb N$, $\mathbb Z$, $\mathbb R$, and $\mathbb C$ the sets of natural numbers, integers, real numbers, and complex numbers respectively. By $\mathbb N_0:=\mathbb N\cup\{0\}$ we denote the nonnegative integers. Furthermore, we will denote by $\mathbb R_{+}:=(0,\infty)$. Given a complex number $z\in\mathbb C$, we denote by $\Re z$ and $\Im z$ its real and imaginary parts respectively. We denote by $\mathbb H:=\{z\in\mathbb C:\Re z>0\}$ the open right-half complex plane. Given $R>0$ and $d\in\mathbb N$, we denote by $\mathbb B_R^d:=\{x\in\mathbb R^d:|x|<R\}$ the open ball in $\mathbb R^d$ of radius $R$ centered at the origin. 

Given $x,y\in\mathbb R_+$, we say $x\lesssim y$ if there exists a constant $C>0$ such that $x\leq Cy$. Furthermore, we say that $x\simeq y$ if $x\lesssim y$ and $y\lesssim x$. For a one-parameter family of positive numbers $x_\lambda,y_\lambda$, we say that $x_\lambda\lesssim y_\lambda$ if there exists a constant $C>0$, independent of the parameter $\lambda$, such that $x_\lambda\leq Cy_\lambda$ for all $\lambda$.

On a Hilbert space $\mathcal H$, we denote by $\mathcal B(\mathcal H)$ the space of bounded linear operators. For a closed operator $\big(L,\mathcal D(L)\big)$ on the Hilbert space $\mathcal H$ with domain $\mathcal D( L)$, we denote the resolvent set by $\rho(L)$ and by $R_L(\lambda):=(\lambda I- L)^{-1}$ the resolvent operator for $\lambda\in\rho( L)$. Furthermore, we denote by $\sigma( L):=\mathbb C\setminus\rho( L)$ the spectrum of $\big(L,\mathcal D(L)\big)$. In particular we denote by $\sigma_p( L):=\{\lambda\in\sigma(L):\exists  u\in\mathcal D( L)\setminus\{0\}\text{ such that }  u\in\ker(\lambda I- L)\}$ the point spectrum of $\big(L,\mathcal D(L)\big)$. As we will only work with \textit{strongly continuous semigroups $\big(S(s)\big)_{s\geq0}$ of bounded operators on} $\mathcal H$, we will instead refer to these simply as \textit{semigroups on} $\mathcal H$ whenever necessary.
		
On a domain $\Omega\subset\mathbb R$ and given functions $f,g\in C^1(\Omega)$, we denote their Wronskian by $W(f,g)(x)=f(x)g'(x)-f'(x)g(x)$. Furthermore, for a function of multiple variables, we define $u[t_0]:=\big(u(t_0,\cdot),\partial_0u(t_0,\cdot)\big)$ where $t_0\in\mathbb R$.

%Throughout, we will adopt the Einstein summation convention, i.e., repeated upper and lower indices indicate an implied sum over those indices.

\subsection{Function spaces} \label{Functional Setting}
For $k\in\mathbb N_0$ and $U\subset\mathbb R^d$ open and bounded, we define  Sobolev norms by 
	\[
				\|f\|^2_{H^k(U)}:=\sum_{|\kappa|\leq k}\|\partial^\kappa f\|^2_{L^2(U)},\;\|f\|^2_{\dot{H}^k(\Omega)}:=\sum_{|\kappa|=k}\|\partial^\kappa f\|^2_{L^2(U)}
	\]
		for all $f\in C^\infty(\overline U)$ where $\kappa=(\kappa_1,\dots,\kappa_d)\in\mathbb N^d$ is a multi-index with $|\kappa|:=\sum_{i=1}^d\kappa_i$. For $k \in \N_0$, the Sobolev space $H^k(U)$ is then defined as the completion of $C^\infty(\overline U)$ with respect to the norm $\|\cdot\|_{H^k(U)}$. 
Since we restrict ourselves to the radial setting, we define for 
$R>0$, $k\in\mathbb N_0$, and $d\in\mathbb N$, the following space of functions
			$$
				H^k_\text{rad}(\mathbb B^d_R):=\{f:(0,R)\to\mathbb C:f(|\cdot|)\in H^k(\mathbb B^d_R)\}.
			$$
		We set
			$$
				\mathcal H^k_R:=H^k_\text{rad}(\mathbb B^d_R)\times H^{k-1}_\text{rad}(\mathbb B^d_R)
			$$
		which is a Hilbert space with inner product 
			$$
				(\mathbf f|\mathbf g)_{\mathcal H^k_R}:= \left (f_1(|\cdot|)|g_1(|\cdot|) \right )_{H^k(\mathbb B^d_R)}+(f_2(|\cdot|)|g_2(|\cdot|))_{H^{k-1}(\mathbb B^d_R)},
			$$
for  $\mathbf f=(f_1,f_2)$ and $\mathbf g=(g_1,g_2)$ and norm
			$$
				\|\mathbf f\|_{\mathcal H^k_R}^2:= (\mathbf f|\mathbf f)_{\mathcal H^k_R} = \|f_1(|\cdot|)\|_{H^k(\mathbb B^d_R)}^2+\|f_2(|\cdot|)\|_{H^{k-1}(\mathbb B^d_R)}^2
			$$
We also consider the space of radial functions which are smooth up to the boundary of $\overline{\mathbb B_R^d}$
			$$
				C^\infty_\text{rad}(\overline{\mathbb B_R^d}):=\{f\in C^\infty(\overline{\mathbb B_R^d}):f\text{ is radial}\}
			$$
		and the space of smooth ``even'' functions
			$$
				C_e^\infty[0,R]:=\{f\in C^\infty[0,R]:f^{(2j+1)}(0)=0\text{ for }j\in\mathbb N_0\}.
			$$
		By Lemma 2.1 of \cite{G21}, there is a one-to-one correspondence between $C^\infty_\text{rad}(\overline{\mathbb B_R^d})$ and $C_e^\infty[0,R]$. For ease of reading, we attempt to avoid switching between $C^\infty_\text{rad}(\overline{\mathbb B_R^d})$ and $C_e^\infty[0,R]$ and stick only with $C_e^\infty[0,R]$ whenever possible. We remark that $C_e^\infty[0,R]$ is dense in $H^k_\text{rad}(\mathbb B^d_R)$ which implies $C_e^\infty[0,R]^2$ is dense in $\mathcal H_R^k$.

		\subsection{Short overview of the paper}
Establishing Theorem \ref{stability on whole space} proceeds in two steps. First, we reformulate the equation for small perturbations of $u_T^*$ as an abstract evolution equation in hyperboloidal similarity coordinates 
				\begin{align}\label{Eq:Hyperbolic}
					\begin{cases}
						\partial_s\Phi(s)=( \mathbf L_0 + \mb L')\Phi(s) + \mb N(\Phi(s)),
						\\
						\Phi(0)=\Phi_0,
					\end{cases}
				\end{align}
where $\mb L_0$ represents the free wave evolution, $\mb L'$ the perturbation arising from linearization around $u_T^*$, and $\mb N$ the remaining nonlinearity. For precise definitions of these objects, see Section \ref{The Quadratic Wave Equation in Hyperboloidal Similarity Coordinates}. In Section \ref{Linear Stability Analysis}, we study the operator $\mb L = \mb L_0 + \mb L'$ on the radial Sobolev space $\mc H^6_R$ for $R \geq \frac12$ fixed. By using the results of \cite{DO21} on the free operator $\mb L_0$, which we summarize in  Section \ref{The Wave Equation in Hyperboloidal Similarity Coordinates}, it is straightforward to infer that $\mathbf L$ generates a semigroup $\big(\mathbf S(s)\big)_{s\geq0}$ on $\mc H^6_R$. A  careful spectral analysis carried out in Section \ref{Spectral Analysis_qwe} reveals that there is an $\omega_0 > 0$ such that 
\[ \sigma(\mb L) \subset \{ \lambda \in \C: \mathrm{Re} \lambda \leq - \omega_0 \} \cup \{1,4\}. \]
Consequently, according to a spectral mapping theorem that applies to our setting, we prove linear stability for a co-dimension two subspace of initial data in $\mathcal H^6_R$. In Section \ref{Nonlinear Stability Analysis}, we study Equation \eqref{Eq:Hyperbolic} via Duhamel's formula. That is, we study the integral equation
\[ \Phi(s)=\mathbf S(s)\Phi_0+\int_0^s\mathbf S(s-s')\mathbf N\big(\Phi(s')\big)ds'. \]
By removing the unstable contributions of the initial data, we prove the existence of an  exponentially decaying solution by a standard fixed point argument. Though we do not carry it out explicitly, this can be formulated as saying that there is a co-dimension two manifold of initial data on an initial hyperboloid which leads to blowup via $u^*_T$ along hyperboloidal time-slices as $s \to \infty$. 

The second step in the proof of Theorem \ref{stability on whole space}, carried out in Sections \ref{Preparation of Hyperboloidal Initial Data} and \ref{Evolution of Physical Initial Data}, consists of constructing the adjustment term $(Y_1,Y_2)$ and evolving initial data from $t =0$ of the form 
\[ u_1^*[0]+(f,g)+\alpha(Y_1,Y_2)\]
for arbitrary, small $(f,g)\in C^{\infty}(\B^7)^2$ and  $\alpha\in\mathbb R$. By properly choosing the blowup time $T$ and the parameter $\alpha$, we find a hyperboloid to which we can restrict the physical evolution of our data in a way that allows us to continue its evolution in a hyperboloidal region (see Figure \ref{HSC_Spacetime_Diagram}) via Equation \eqref{Eq:Hyperbolic}. From this, we infer the convergence claimed in Equation \eqref{convergence}.

	\section{The Wave Equation in Hyperboloidal Similarity Coordinates} \label{The Wave Equation in Hyperboloidal Similarity Coordinates}
		In this section, we reformulate Equation \eqref{qwe} as a first-order system in hyperboloidal similarity coordinates. First, we review the corresponding well-posedness theory for the free radial wave equation developed in \cite{DO21} by Donninger and Ostermann.
		
		\subsection{Free Wave Evolution in Hyperboloidal Similarity Coordinates} \label{Free Wave Evolution in Hyperboloidal Similarity Coordinates}
			Unless otherwise stated, all functions are assumed to be radial. Let $d\in\mathbb N$ and $u\in C^\infty(\mathbb R\times[0,\infty))$. With $v=u\circ\eta_T$, we infer by the chain rule that the quantity
				$$
					\Big(\partial_t^2-\partial_r^2-\frac{d-1}{r}\partial_r\Big)u(t,r)
				$$
			transforms into
				$$
					-g^{00}(s,y)\Big(\partial_s^2-c_{11}^d(y)\partial_y-c_{12}(y)\partial_y^2-c_{20}^d(y)\partial_s-c_{21}(y)\partial_y\partial_s\Big)v(s,y),
				$$
			where
				\begin{align*}
					g^{00}(s,y)=&-e^{2s}\frac{1-h'(y)^2}{\big(yh'(y)-h(y)\big)^2},
					\\
					c_{11}^d(y)=&-\frac{d-1}{y}\frac{\big(y  h'(y)- h(y)\big) h(y)}{1- h'(y)^2}+\frac{y^2- h(y)^2}{1- h'(y)^2}\frac{y  h''(y)}{y  h'(y)- h(y)}+2\frac{ h(y) h'(y)-y}{1- h'(y)^2},
					\\
					c_{12}(y)=&\frac{ h(y)^2-y^2}{1- h'(y)^2},
					\\
					c_{20}^d(y)=&-1-\frac{d-1}{y}\frac{\big(y  h'(y)- h(y)\big) h'(y)}{1- h'(y)^2}+\frac{y^2- h(y)^2}{1- h'(y)^2}\frac{ h''(y)}{y  h'(y)- h(y)},
					\\
					c_{21}(y)=&2\frac{ h(y) h'(y)-y}{1- h'(y)^2}.
				\end{align*}
			
			\begin{definition}
				Let $R>0$ and $d,k\in\mathbb N$. We define the \textit{free radial wave evolution} as the unbounded operator $\big(\tilde{\mathbf L}_d,\mathcal D(\tilde{\mathbf L}_d)\big)$, with $\mathcal D(\tilde{\mathbf L}_d)=C^\infty_e[0,R]^2$, on $\mathcal H_R^k$ by
					$$
						\tilde{\mathbf L}_d\mathbf f(y):=
						\begin{pmatrix}
							f_2(y)
							\\
							c_{11}^d(y)f'_1(y)+c_{12}(y)f''_1(y)+c_{20}^d(y)f_2(y)+c_{21}(y)f'_2(y)
						\end{pmatrix}.
					$$
			\end{definition}
			
			The linear, radial wave equation in hyperboloidal similarity coordinates is equivalent to the first-order system
				$$
					\partial_s\mathbf v(s,\cdot)=\tilde{\mathbf L}_d\mathbf v(s,\cdot)
				$$
			where
				$$
					\mathbf v(s,\cdot)=\begin{pmatrix}v(s,\cdot)\\\partial_ sv(s,\cdot)\end{pmatrix}.
				$$
			In \cite{DO21}, it was shown that $\big(\tilde{\mathbf L}_d,\mathcal D(\tilde{\mathbf L}_d)\big)$ is closable and its closure, $\big(\mathbf L_d,\mathcal D(\mathbf L_d)\big)$, generates a semigroup which we recall here.
				\begin{lemma}[\cite{DO21}, Theorem 2.1] \label{generation}
					Let $R\geq\frac{1}{2}$, $d,k,\in\mathbb N$, such that $d\geq3$ is odd and $k\geq\frac{d-1}{2}$. The operator $\big(\tilde{\mathbf L}_d,\mathcal D(\tilde{\mathbf L}_d)\big)$ is closable and its closure, $\big(\mathbf L_d,\mathcal D(\mathbf L_d)\big)$, is the generator of a semigroup $\big(\mathbf S_d(s)\big)_{s\geq0}$ on $\mathcal H_R^k$ with the property that there exists $M\geq1$ such that
						$$
							\|\mathbf S_d(s)\mathbf f\|_{\mathcal H_R^k}\leq Me^{\frac{s}{2}}\|\mathbf f\|_{\mathcal H_R^k}
						$$
					for all $s\geq0$ and $\mathbf f\in\mathcal H_R^k$.
				\end{lemma}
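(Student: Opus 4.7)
The plan is to apply the Lumer--Phillips generation theorem: equip $\mathcal H^k_R$ with an inner product equivalent to its standard one in which $\tilde{\mathbf L}_d - \omega$ is dissipative for some $\omega \geq \tfrac12$, and verify the range condition for the resolvent for some $\lambda$ with $\Re \lambda > \omega$. Closability follows automatically because every densely defined dissipative operator on a Hilbert space is closable, and Lumer--Phillips applied to the closure yields the semigroup together with a growth bound $\|\mathbf S_d(s)\|\leq M e^{\omega s}$; the sharp value turns out to be $\omega = \tfrac12$, reflecting the self-similar scaling of the wave operator.

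For dissipativity, I would compute $\Re (\tilde{\mathbf L}_d \mathbf f | \mathbf f)_{\mathcal H^k_R}$ by integrating by parts in $y$. The key structural observations are: (i) since $|h'(y)| < 1$ the principal part is strictly hyperbolic with spacelike level sets of $s$, so the natural energy flux has a definite sign; (ii) at $y = R$ the characteristics of the original wave equation flow outward through the hyperboloid, so the boundary terms there contribute with the correct sign without imposing any boundary condition; (iii) at $y = 0$ the singular factor $1/y$ in $c^d_{11}$ is tamed by the parity constraint built into $C^\infty_e[0,R]$, which guarantees that $\mathbf f(|\cdot|)$ extends smoothly as a radial function on the ball, and by the Sobolev embedding $H^k_{\mathrm{rad}}(\mathbb B^d_R) \hookrightarrow C^0$ valid once $k \geq (d-1)/2$. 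Combining these, a careful integration by parts together with Hardy-type inequalities to absorb the singular lower-order commutators produced by differentiating the equation $j$ times for $1 \leq j \leq k$ yields the desired energy inequality in an equivalent norm.

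For the range condition, given $\mathbf g = (g_1,g_2) \in C^\infty_e[0,R]^2$ I would solve $(\lambda - \tilde{\mathbf L}_d) \mathbf f = \mathbf g$ by ODE methods. Setting $f_2 = \lambda f_1 - g_1$ reduces the system to a linear second-order ODE for $f_1$ of the form $c_{12}(y) f_1'' + \bigl(c_{11}^d(y) + \lambda c_{21}(y)\bigr) f_1' + \lambda\bigl(\lambda - c_{20}^d(y)\bigr) f_1 = \tilde g(y)$. This equation has a regular singular point at $y = 0$ with indicial exponents $0$ and $2 - d$ (the Frobenius exponents of the radial wave operator) and is regular on $(0,R]$ since $c_{12}(y) \neq 0$ there. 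Because $d$ is odd, a Frobenius fundamental system $\{\phi_1,\phi_2\}$ is available in which $\phi_1$ is analytic and even at the origin; variation of parameters then produces a solution $f_1 \in C^\infty_e[0,R]$ whenever $\lambda$ lies outside a discrete set of resonances. Hence $C^\infty_e[0,R]^2 \subset \rg(\lambda - \tilde{\mathbf L}_d)$ for all sufficiently large $\lambda$, and density of this space in $\mathcal H^k_R$ completes the verification.

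The main obstacle I anticipate is coordinating these two steps at higher regularity $k$. Each differentiation of the equation produces commutators with the coefficients $c^d_{11}, c_{12}, c^d_{20}, c_{21}$; the singular coefficient $c^d_{11}$ in particular yields terms behaving like $y^{-1}$ times a derivative of $\mathbf f$, which are not directly controlled by the plain $\mathcal H^k_R$-norm. The remedy, foreshadowed by the parity condition built into $C^\infty_e[0,R]$, is a combination of Hardy's inequality at $y = 0$ with a mildly weighted inner product on $\mathcal H^k_R$ that damps the borderline boundary contributions at $y = R$ without altering the underlying topology. Ensuring that these adjustments remain simultaneously compatible with the resolvent estimate is the delicate technical point, and is precisely where the threshold $k \geq (d-1)/2$ enters.
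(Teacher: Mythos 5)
This lemma is \emph{cited} from \cite{DO21}, Theorem~2.1; the present paper offers no proof of it, so there is no ``paper's own proof'' to compare against directly. Your overall framework --- Lumer--Phillips generation via an equivalent inner product making $\tilde{\mathbf L}_d - \omega$ dissipative, closability of a densely defined dissipative operator, and a density/range check by solving the spectral ODE --- is indeed the standard template used throughout the Donninger school of papers (including \cite{BDS19} and \cite{DO21}), so the outline is in the right spirit.

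However, there is a genuine gap in your range-condition step. You assert that the second-order ODE for $f_1$ ``is regular on $(0,R]$ since $c_{12}(y) \neq 0$ there.'' This is false precisely for the parameter regime in the statement, $R \geq \tfrac12$: with $h(y) = \sqrt{2+y^2}-2$ one computes
\[
h(y)^2 - y^2 = 6 - 4\sqrt{2+y^2},
\]
which vanishes at $y = \tfrac12$, so $c_{12}(\tfrac12) = 0$. This is not incidental: $y = \tfrac12$ is the trace of the backward light cone of $(T,0)$, and the degeneration of the principal part there is exactly the new feature that distinguishes hyperboloidal similarity coordinates from ordinary similarity coordinates, where the analogous singular point sits on the boundary. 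For $R > \tfrac12$ one therefore has an \emph{interior} regular singular point of the resolvent ODE, and one cannot simply run variation of parameters with a fundamental system smooth on all of $(0,R]$. One must perform Frobenius analysis at $y = \tfrac12$ as well (where one index is $0$ and the other is $\lambda$-dependent), select the solution branch that is smooth across the light cone, and verify that the particular solution built from the smooth-at-$0$ branch on $[0,\tfrac12)$ matches smoothly onto the smooth-at-$\tfrac12$ branch and continues regularly to $y = R$. This connection problem at the light cone is the heart of the matter in \cite{BDS19} and \cite{DO21}, and your proposal skips it entirely. A secondary, minor point: after substituting $f_2 = \lambda f_1 - g_1$ into the second component, the zeroth-order coefficient comes out with the opposite sign from what you wrote (it should be $-\lambda(\lambda - c_{20}^d)f_1$ on the same side as the principal part), and the right-hand side acquires $g_1,g_1'$ terms beyond a bare $\tilde g$; neither changes the analysis, but the sign slip is worth correcting.
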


		\subsection{The Quadratic Wave Equation in Hyperboloidal Similarity Coordinates} \label{The Quadratic Wave Equation in Hyperboloidal Similarity Coordinates}
			Now, we can reformulate Equation \eqref{qwe} as a first-order system in hyperboloidal similarity coordinates. For the remainder of this paper, we fix $d=7$. We look for solutions of the form $u=u_T^*+\tilde u$ where $\tilde u$ represents some perturbation of $u_T^*$ with $T>0$ to be determined later. With this ansatz, Equation \eqref{qwe} becomes
				$$
					\bigg(\partial_t^2-\partial_r^2-\frac{6}{r}\partial_r+V_T(t,r)\bigg)\tilde u(t,r)=\tilde u(t,r)^2
				$$
			where $V_T:=-2u_T^*$. Setting $\tilde v=\tilde u\circ\eta_T$, we obtain the equation
				\begin{align*}
					\partial_s^2\tilde v(s,y)=&c_{11}(y)\partial_y\tilde v(s,y)+c_{12}(y)\partial_y^2\tilde v(s,y)+c_{20}(y)\partial_s\tilde v(s,y)+c_{21}(y)\partial_y\partial_s\tilde v(s,y)
					\\
					&+\frac{(V_T\circ\eta_T)(s,y)}{g^{00}(s,y)}\tilde v(s,y)-\frac{\tilde v(s,y)^2}{g^{00}(s,y)}
				\end{align*}
			where $c_{11}:=c_{11}^7$ and $c_{20}:=c_{20}^7$. Observe that the function 
				$$
					V(y):=\frac{V_T\big(\eta_T(s,y)\big)}{g^{00}(s,y)}=\frac{48\big(21h(y)^2-5y^2\big)}{\big(5y^2+3h(y)^2\big)^2}\frac{\big(yh'(y)-h(y)\big)^2}{\big(1-h'(y)^2\big)}.
				$$
			is in $C_e^\infty[0,R]$ for any $R>0$. Furthermore, we write
				$$
					-\frac{\tilde v(s,y)^2}{g^{00}(s,y)}=e^{2s}N(y,e^{-2s}\tilde v)
				$$
			where
				$$
					N(y,x):=\frac{\big(yh'(y)-h(y)\big)^2}{1-h'(y)^2}x^2.
				$$
			Upon setting
				$$
					\tilde{\mathbf v}(s,\cdot):=\begin{pmatrix}\tilde v(s,\cdot)\\\partial_ s\tilde v(s,\cdot)\end{pmatrix},
				$$
			the quadratic wave equation, as a first-order system in hyperboloidal similarity coordinates, takes the form
				$$
					\partial_s\tilde{\mathbf v}(s,\cdot)=(\tilde{\mathbf L}_7+\mathbf L')\tilde{\mathbf v}(s,\cdot)+e^{2s}\mathbf N\big(e^{-2s}\tilde{\mathbf v}(s,\cdot)\big)
				$$
			where $\mathbf L'$ is defined by
				$$
					\mathbf L'\mathbf f(y):=
					\begin{pmatrix}
						0
						\\
						V(y)f_1(y)
					\end{pmatrix}
				$$
			and
				$$
					\mathbf N\big(\mathbf f\big)(y):=
					\begin{pmatrix}
						0
						\\
						N(y,f_1(y))
					\end{pmatrix}.
				$$
			As $V\in C_e^\infty[0,R]$ for any $R>0$, we see that $\mathbf L'\in\mathcal B(\mathcal H_R^k)$ for any $R>0$ and $k\in\mathbb N$. An autonomous equation is obtained by setting $\Phi(s):=e^{-2s}\tilde{\mathbf v}(s,\cdot)$ which yields
				\begin{equation}
					\partial_s\Phi(s)=(\tilde{\mathbf L}_7-2\mathbf I+\mathbf L')\Phi(s)+\mathbf N\big(\Phi(s)\big). \label{NL eqn}
				\end{equation}
			In what follows, we set 
			
\[\tilde{\mathbf L}:=\tilde{\mathbf L}_7-2\mathbf I+\mathbf L'\] in which case $\big(\tilde{\mathbf L},\mathcal D(\tilde{\mathbf L})\big)$ is an unbounded, densely defined operator on $\mathcal H_R^k$ with $\mathcal D(\tilde{\mathbf L}):=\mathcal D(\tilde{\mathbf L}_7)$ for $R\geq\frac{1}{2}$. From this point on, we refrain from referring to the domains of the various operators unless absolutely necessary. Furthermore, $R$ will always denote an arbitrary real number satisfying $R\geq\frac{1}{2}$. To simplify notation, we set $\mathcal H_R:=\mathcal H_R^6$.

\section{Linear Stability Analysis} \label{Linear Stability Analysis}
		\subsection{Well-Posedness of the Linearized Evolution} \label{Analysis of the Linearized Evolution}
			First, we show that $\tilde{\mathbf L}$ is closable and its closure, $\mathbf L$, is the generator of a semigroup $\big(\mathbf S(s)\big)_{s\geq0}$ on $\mathcal H_R$. In fact, this is a very simple consequence of Lemma \ref{generation}.
				\begin{lemma} \label{well-posed}
					The operator $\tilde{\mathbf L}$ is closable and its closure, denoted by $\mathbf L$, is the generator of a semigroup $\big(\mathbf S(s)\big)_{s\geq0}$ on $\mathcal H_R$ and satisfies the estimate
						\begin{equation}
							\|\mathbf S(s)\mathbf f\|_{\mathcal H_R}\leq Me^{\big(-\frac{3}{2}+M\|\mathbf L'\|_{\mathcal H_R}\big)s}\|\mathbf f\|_{\mathcal H_R} \label{qwe growth bound}
						\end{equation}
					for all $s\geq0$, $\mathbf f\in\mathcal H_R$ and for $M\geq1$ as in Lemma \ref{generation}.
				\end{lemma}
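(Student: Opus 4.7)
The plan is to derive this lemma as a direct consequence of Lemma \ref{generation} together with two standard semigroup facts: rescaling and bounded perturbation.

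First, I would verify the hypotheses of Lemma \ref{generation} are satisfied: here $d=7$ is odd with $d \geq 3$, and $k=6 \geq \frac{d-1}{2}=3$, so $\tilde{\mathbf L}_7$ is closable on $\mathcal H_R$ and its closure $\mathbf L_7$ generates a semigroup $\big(\mathbf S_7(s)\big)_{s\geq 0}$ with growth bound $\|\mathbf S_7(s)\mathbf f\|_{\mathcal H_R} \leq M e^{s/2}\|\mathbf f\|_{\mathcal H_R}$. Next, since both $-2\mathbf I$ and $\mathbf L'$ are everywhere-defined bounded operators on $\mathcal H_R$ (the latter was noted in Section \ref{The Quadratic Wave Equation in Hyperboloidal Similarity Coordinates}), the operator $\tilde{\mathbf L}=\tilde{\mathbf L}_7 - 2\mathbf I + \mathbf L'$ is a bounded perturbation of $\tilde{\mathbf L}_7$ defined on the same domain. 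A standard fact about closures then gives that $\tilde{\mathbf L}$ is closable and its closure is $\mathbf L := \mathbf L_7 - 2\mathbf I + \mathbf L'$ with $\mathcal D(\mathbf L)=\mathcal D(\mathbf L_7)$.

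For the semigroup statement, I would first observe that rescaling by a scalar shift gives that $\mathbf L_7 - 2\mathbf I$ generates the semigroup $e^{-2s}\mathbf S_7(s)$, for which the growth bound from Lemma \ref{generation} improves to
\[ \|e^{-2s}\mathbf S_7(s)\mathbf f\|_{\mathcal H_R} \leq M e^{-\frac{3}{2}s}\|\mathbf f\|_{\mathcal H_R}. \]
Then I invoke the bounded perturbation theorem for strongly continuous semigroups (see, e.g., Engel--Nagel): if a generator $A$ yields a semigroup bounded by $M e^{\omega s}$ and $B \in \mathcal B(\mathcal H_R)$, then $A+B$ generates a semigroup bounded by $M e^{(\omega + M\|B\|)s}$. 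Applying this with $A = \mathbf L_7 - 2\mathbf I$, $\omega = -3/2$, and $B = \mathbf L'$ yields precisely the desired estimate \eqref{qwe growth bound}.

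There is no real obstacle here; the statement is essentially a bookkeeping exercise building on the nontrivial free-evolution result of \cite{DO21}. The only minor care needed is to ensure that the constant $M$ in \eqref{qwe growth bound} is the same $M$ from Lemma \ref{generation}, which follows because the rescaling step does not alter $M$ and the bounded perturbation theorem likewise preserves it in the form stated above.
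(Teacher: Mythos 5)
Your proof is correct and follows essentially the same route as the paper: invoke Lemma \ref{generation} for the free evolution, shift by $-2\mathbf{I}$ to get the improved growth bound $Me^{-3s/2}$, and then apply the bounded perturbation theorem for the compactly/boundedly perturbing potential $\mathbf{L}'$. The only addition you make beyond the paper's proof is the explicit verification of the hypotheses of Lemma \ref{generation} and the remark on closability under bounded perturbation, both of which the paper leaves implicit.
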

				\begin{proof}
					From Lemma \ref{generation}, we infer the existence of a semigroup $\big(\mathbf S_7(s)\big)_{s\geq0}$ generated by $\mathbf L_7$ on $\mathcal H_R$ satisfying the estimate $\|\mathbf S_7(s)\|_{\mathcal H_R}\leq Me^{\frac{s}{2}}$ for some $M\geq1$ and all $s\geq0$. As a consequence, the operator $\mathbf L_7-2\mathbf I$, with $\mathcal D(\mathbf L_7-2\mathbf I)=\mathcal D(\mathbf L_7)$, generates the semigroup $\big(\mathbf S_0(s)\big)_{s\geq0}$ on $\mathcal H_R$ given by $\mathbf S_0(s)=e^{-2s}\mathbf S_7(s)$ which satisfies $\|\mathbf S_0(s)\|_{\mathcal H_R}\leq Me^{-\frac{3}{2}s}$ for all $s\geq0$. Since $\mathbf L'\in\mathcal B(\mathcal H_R)$, the bounded perturbation theorem (see \cite{EN00}, Theorem III.1.3) implies that $\mathbf L$ generates a semigroup $\big(\mathbf S(s)\big)_{s\geq0}$ on $\mathcal H_R$ satisfying the claimed estimate.
				\end{proof}
				
		\subsection{Spectral Analysis} \label{Spectral Analysis_qwe}	
			Observe that Lemma \ref{well-posed} does not necessarily exclude exponential growth of the semigroup. More precisely, \eqref{qwe growth bound} implies that the growth bound for the semigroup $\big(\mathbf S(s)\big)_{s\geq0}$ is at most $-\frac{3}{2}+M\|\mathbf L'\|_{\mathcal H_R}$. Without further information, the sign of this upper bound could be positive. To conclude our analysis of the linear evolution, it will be necessary to improve this upper bound. The improvement we seek would involve showing that, for a sufficiently large subspace of $\mathcal H_R$, this upper bound is indeed negative. In fact, since $V\in C_e^\infty[0,R]$ and the embedding $H_\text{rad}^6(\mathbb B^7)\hookrightarrow H_\text{rad}^5(\mathbb B^7)$ is compact, we infer that $\mathbf L'$ is a compact operator on $\mathcal H_R$. Thus, we can apply Theorem B.1 of \cite{G21} to obtain the desired improvement provided we have a sufficient characterization of $\sigma(\mathbf L)$. According to the following lemma, we can restrict our attention to understanding $\sigma_p(\mathbf L)$.
				\begin{lemma} \label{restrict to point spectrum}
					Let $\epsilon>0$. The set $S_\epsilon:=\sigma(\mathbf L)\cap\{\lambda\in\mathbb C:\Re\lambda\geq-\frac{3}{2}+\epsilon\}$ consists of finitely many eigenvalues of $\mathbf L$, all of which have finite algebraic multiplicity.
				\end{lemma}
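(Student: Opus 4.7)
The plan is to combine the compactness of $\mathbf L'$ (already noted above from $V\in C_e^\infty[0,R]$ and Rellich--Kondrachov) with the resolvent bound for the free part $\mathbf L_0:=\mathbf L_7-2\mathbf I$ supplied by Lemma \ref{generation}, and then run an analytic Fredholm argument on the half-plane $\{\Re\lambda>-3/2\}$.

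First, I would translate the semigroup bound $\|\mathbf S_0(s)\|_{\mathcal H_R}\le Me^{-3s/2}$ into the Hille--Yosida resolvent bound
\[
\{\lambda\in\mathbb C:\Re\lambda>-3/2\}\subset\rho(\mathbf L_0),\qquad \|R_{\mathbf L_0}(\lambda)\|_{\mathcal H_R}\le\frac{M}{\Re\lambda+3/2},
\]
and record the factorization
\[
\lambda\mathbf I-\mathbf L=(\lambda\mathbf I-\mathbf L_0)\bigl(\mathbf I-K(\lambda)\bigr),\qquad K(\lambda):=R_{\mathbf L_0}(\lambda)\mathbf L',
\]
valid on $\mathcal D(\mathbf L)$. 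Since $\mathbf L'$ is compact, $K(\lambda)$ is a compact operator on $\mathcal H_R$ that depends analytically on $\lambda$ throughout the connected open set $\{\Re\lambda>-3/2\}$. For $\lambda_0$ large and positive real, the resolvent bound forces $\|K(\lambda_0)\|<1$, so a Neumann series gives invertibility of $\mathbf I-K(\lambda_0)$. The analytic Fredholm theorem then shows that $\mathbf I-K(\lambda)$ is invertible off a discrete subset of $\{\Re\lambda>-3/2\}$, and at each exceptional point $\lambda$ the Fredholm index-zero property forces $\ker(\lambda\mathbf I-\mathbf L)\ne\{0\}$ and finite algebraic multiplicity via the associated Riesz projection.

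The step I expect to be the main obstacle is upgrading \emph{discrete} to \emph{finite} on $S_\epsilon$. By Lemma \ref{well-posed}, $\sigma(\mathbf L)$ is contained in the vertical strip $\{-3/2+\epsilon\le\Re\lambda\le-3/2+M\|\mathbf L'\|_{\mathcal H_R}\}\cap S_\epsilon$, so the only way discreteness could fail to imply finiteness is if eigenvalues accumulated as $|\Im\lambda|\to\infty$. I would rule this out by establishing $\|K(\lambda)\|_{\mathcal H_R}\to 0$ as $|\Im\lambda|\to\infty$ with $\Re\lambda\ge -3/2+\epsilon$, which uses the concrete structure of $\mathbf L_0$: the multiplier $\mathbf L'$ gains one derivative (its image sits in $H^5_{\mathrm{rad}}$ while the source lies in $\mathcal H_R^6$), and combined with the dispersive resolvent bounds for $\tilde{\mathbf L}_7$ developed in \cite{DO21} this yields the required decay. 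This is precisely the content of \cite[Theorem B.1]{G21}, whose hypotheses I would verify and then cite. Once the decay is in place, $\mathbf I-K(\lambda)$ is invertible outside a bounded subset of the strip, so $S_\epsilon$ is bounded; combined with discreteness this forces finiteness, and finite algebraic multiplicity at each point follows from the Fredholm analysis above.
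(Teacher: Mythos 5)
Your argument and the paper's proof both reduce the lemma to the compactness of $\mathbf L'$, the half-plane confinement of $\sigma(\mathbf L_7-2\mathbf I)$, and a single citation of Theorem B.1.i of \cite{G21}; the paper simply cites that theorem directly, while you also sketch the analytic-Fredholm mechanism behind it. That reconstruction is accurate except for the final ingredient: the decay $\|K(\lambda)\|\to0$ as $|\Im\lambda|\to\infty$ in the strip does not rely on any ``dispersive resolvent bounds'' from \cite{DO21}, nor on $\mathbf L'$ gaining a derivative. The mechanism is softer. The identity $R_{\mathbf L_0}(\lambda)\mathbf f=\lambda^{-1}\mathbf f+\lambda^{-1}R_{\mathbf L_0}(\lambda)\mathbf L_0\mathbf f$ on the dense domain, combined with the uniform Hille--Yosida bound on $\{\Re\lambda\geq -\tfrac32+\epsilon\}$, gives $R_{\mathbf L_0}(\lambda)\to 0$ in the strong operator topology as $|\lambda|\to\infty$ in the strip; composing a uniformly bounded, strongly null family with the fixed compact operator $\mathbf L'$ upgrades this to norm convergence $\|R_{\mathbf L_0}(\lambda)\mathbf L'\|\to0$ by a standard precompactness covering argument. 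That purely operator-theoretic step is what \cite[Theorem B.1.i]{G21} packages, so your reliance on structure-specific PDE estimates there is both unnecessary and, as stated, not actually available in \cite{DO21}.
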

				\begin{proof}
					Theorem II.1.10.ii of \cite{EN00} implies that $\sigma(\mathbf L_7-2\mathbf I)\subseteq\{\lambda\in\mathbb C:\Re\lambda\leq-\frac{3}{2}\}$. Since $\mathbf L'$ is compact, Theorem B.1.i of \cite{G21} implies the claim.
				\end{proof}
			
			By Theorem B.1.ii and B.1.iii of \cite{G21}, a characterization of the unstable portion of the point spectrum, namely $\sigma_p(\mathbf L)\cap\overline{\mathbb H}$, is sufficient to obtain an improvement of \eqref{qwe growth bound} on the remaining stable subspace. This is achieved by the following key proposition.
			
				\begin{proposition} \label{spectrum}
					We have that
						$$
							\sigma_p(\mathbf L)\cap\overline{\mathbb H}=\{1,4\}.
						$$
					Furthermore, $\ker(\mathbf I-\mathbf L)=\langle\mathbf f_1^*\rangle$ and $\ker(4\mathbf I-\mathbf L)=\langle\mathbf f_4^*\rangle$ where
						$$
							\mathbf f_1^*(y):=\frac{(7h(y)^2-15y^2)h(y)}{(5y^2+3h(y)^2)^3}
								\begin{pmatrix}
									1
									\\
									3
								\end{pmatrix},\;\;
							\mathbf f_4^*(y):=\frac{1}{(5y^2+3h(y)^2)^3}
								\begin{pmatrix}
									1
									\\
									6
								\end{pmatrix}.
						$$
				\end{proposition}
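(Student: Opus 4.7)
The plan is to reduce the eigenvalue problem $\lambda \mathbf{f} = \mathbf{L}\mathbf{f}$ to a scalar second-order linear ODE for the first component $f_1$. Writing out the first component of the eigenvalue equation gives $f_2 = (\lambda + 2)f_1$, and inserting this into the second component produces an ODE of the form
\begin{equation*}
c_{12}(y)\, f_1''(y) + p_\lambda(y)\, f_1'(y) + q_\lambda(y)\, f_1(y) = 0, \qquad y \in (0, R),
\end{equation*}
whose coefficients are smooth in $y$ on $(0, R)$ and polynomial in $\lambda$. A direct computation gives $c_{12}(y) = (3 - 2\sqrt{2 + y^2})(2 + y^2)$, which vanishes precisely at $y = 1/2$ (the backward light cone); hence the ODE has regular singular points at $y = 0$ (from spherical symmetry) and $y = 1/2$, while $y = R$ is an ordinary point.

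The first step is to verify that $\mathbf{f}_1^*$ and $\mathbf{f}_4^*$ are indeed eigenfunctions. This is a direct substitution into the eigenvalue equation using the explicit formulas for $V$ and the coefficients $c_{11}, c_{12}, c_{20}, c_{21}$, together with elementary identities involving $h$ and $h'$; both pairs manifestly belong to $C_e^\infty[0, R]^2 \subset \mathcal{D}(\mathbf{L})$. Conceptually, $\mathbf{f}_1^*$ is the pullback under $\eta_T$ of the symmetry mode $F_1^*$ from \eqref{Eq:Symm_Mode} (with the $-2$ shift of the operator accounting for the eigenvalue $\lambda = 1$ rather than $3$), while $\mathbf{f}_4^*$ is the pullback of the genuine unstable mode $F_4^*$ from \eqref{Eq:Unst_Mode}.

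The second step, which is the crux of the argument, is to exclude any other eigenvalue in $\overline{\mathbb{H}}$. Following the strategy of \cite{CGS21, GS21}, I would restrict attention first to the backward-light-cone interval $[0, 1/2]$ and seek a substitution $f_1(y) = w_\lambda(y)\, \varphi(z(y))$, with an appropriate prefactor $w_\lambda$ and a change of variable $z = z(y)$ sending $\{0, 1/2\}$ to $\{0, 1\}$, that transforms the ODE into a Gauss hypergeometric equation with parameters $a(\lambda), b(\lambda), c(\lambda)$ depending polynomially on $\lambda$. Frobenius analysis at $y = 0$ singles out the solution proportional to ${}_2F_1(a, b; c; z)$; the further requirement that the resulting $f_1$ lie in $H^6$ through $y = 1/2$ then forces the vanishing of a classical connection coefficient expressed as an explicit ratio of $\Gamma$-functions in $a, b, c$. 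A careful analysis of this transcendental equation, combining the termination conditions of the hypergeometric series with estimates on $\Gamma$-function ratios in the closed right half-plane, should show that its only zeros with $\mathrm{Re}\,\lambda \geq 0$ occur at $\lambda = 1$ and $\lambda = 4$. Finally, a solution on $[0, 1/2]$ that is smooth across $y = 1/2$ extends uniquely by standard ODE theory to a smooth solution on all of $[0, R]$, since the coefficients are smooth on $(1/2, R]$ and $R$ is an ordinary point; hence the spectral problem on $\mathcal{H}_R$ reduces to the light-cone problem.

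The main obstacle lies in Step 2: identifying a change of variables that converts an ODE whose coefficients involve the square root in $h(y) = \sqrt{2 + y^2} - 2$ into explicit hypergeometric form, and then rigorously excluding all zeros of the resulting connection coefficient in $\overline{\mathbb{H}} \setminus \{1, 4\}$. As emphasized in Remark \ref{Rem:Spec}, unlike the $d = 9$ case of \cite{CGS21}, where conformal invariance yields the unstable eigenfunction as a direct algebraic consequence of the time-translation mode, no such symmetry shortcut is available here. The exclusion of spurious unstable eigenvalues therefore rests on delicate analytic estimates on $\Gamma$-function combinations along vertical lines in the complex plane, which is where the bulk of the technical work will be concentrated.
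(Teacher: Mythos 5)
The central step of your proposal---a substitution reducing the spectral ODE to a Gauss hypergeometric equation and an analysis of the resulting connection coefficient---cannot go through. After passing to standard similarity coordinates $\rho$ inside the backward light cone (the step the paper also performs), the coefficient ODE becomes
\[
(1-\rho^2)f''(\rho)+\Big(\tfrac{6}{\rho}-2(\lambda+3)\rho\Big)f'(\rho)-\Big((\lambda+2)(\lambda+3)-\tfrac{48(21-5\rho^2)}{(5\rho^2+3)^2}\Big)f(\rho)=0,
\]
and the potential term contributes additional regular singular points at $\rho=\pm i\sqrt{3/5}$. Together with $\rho=0,\pm1,\infty$, this equation has \emph{seven} regular singularities on the Riemann sphere, not three; after the further change of variable $\rho=\sqrt{3x/(8-5x)}$ used in the paper, one obtains a Heun-type equation with four regular singular points at $x=0,1,\tfrac85,\infty$. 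For Heun equations there is no known closed-form expression for the connection coefficients between Frobenius solutions at different singular points, so the ``Gamma-function ratio'' you want to analyze simply does not exist here. This is precisely the obstruction that forces the paper to adopt a different strategy: a two-fold \emph{supersymmetric} (Darboux-type) transformation first removes the known eigenvalues $\lambda=1$ and $\lambda=4$ by mapping the corresponding eigenfunctions to the trivial solution, and then the power-series coefficients $a_n(\lambda)$ of the Frobenius solution at $x=0$ are controlled through a three-term recurrence and a carefully chosen \emph{quasi-solution} $\tilde r_n(\lambda)$ for the ratios $r_n=a_{n+1}/a_n$; an inductive bound $|r_n/\tilde r_n-1|\le\tfrac14$ (established via Phragm\'en--Lindel\"of for the relevant rational functions of $(n,\lambda)$) forces $r_n\to1$, i.e.\ radius of convergence exactly $1$, so no nontrivial $C^\infty[0,1]$ solution exists for $\lambda\in\overline{\mathbb H}$.

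Two further points. First, your acknowledgment that ``identifying a change of variables that converts an ODE whose coefficients involve the square root \dots into explicit hypergeometric form'' is the main obstacle misidentifies the difficulty: the square root is eliminated by passing to $\rho$-coordinates; the irreducible obstacle is the potential, which destroys the three-singularity structure. Second, even if one could write the connection data in closed form, the analysis around $\lambda=1,4$ would need to be treated specially, since the series does then define an analytic solution on $[0,1]$; the supersymmetric removal in the paper's Step 2 is exactly what makes the subsequent exclusion argument uniform over $\overline{\mathbb H}$. Your Step 1 (direct verification of $\mathbf f_1^*,\mathbf f_4^*$ as eigenfunctions, with $f_2=(\lambda+2)f_1$) and the observation that smoothness across $y=\tfrac12$ plus ODE continuation controls the full interval $[0,R]$ both match the paper, but the core of Step 2 needs to be replaced by a Heun-equation argument of the type the paper carries out.
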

				\begin{proof}
					For $\lambda=1,4$, direct calculations verify that $\mathbf f_\lambda^*\in\mathcal D(\tilde{\mathbf L})$ and $\langle\mathbf f_\lambda^*\rangle\subseteq\ker(\lambda\mathbf I-\mathbf L)$. The reverse inclusion follows from simple ODE arguments. In particular, $\{1,4\}\subseteq\sigma_p(\mathbf L)\cap\overline{\mathbb H}$.
					
					%For the reverse direction, observe that if $\mathbf f=(f_1,f_2)\in\mathcal D(\mathbf L)\setminus\{\mathbf 0\}$ and $(\lambda\mathbf I-\mathbf L)\mathbf f=\mathbf 0$, then another direct calculation shows that $f_1$ solves the ODE
					%	\begin{equation}
				 	%		f_1''+\frac{c_{11}(y)+(\lambda+2)c_{21}(y)}{c_{12}(y)}f_1'+\frac{(\lambda+2)(c_{20}(y)-\lambda-2)+ V(y)}{c_{12}(y)}f_1=0 \label{homogeneous spectral equation}
					%	\end{equation}
					%weakly on the interval $(0,R)$. Since $f_1\in H^6_\text{rad}(\mathbb B^7)$, Sobolev embedding implies $f_1\in C^2(0,R)$. Thus, $f_1$ is a classical solution of Equation \eqref{homogeneous spectral equation} on the interval $(0,R)$. Equation \eqref{homogeneous spectral equation} has two regular singular points: $y=0$ with Frobenius indices $\{0,-5\}$ and $y=\frac{1}{2}$ with Frobenius indices $\{0,1-\lambda\}$. The analysis in the proof of Lemma 4.4 of \cite{DO20} with $d=5$ allows us to conclude that $f_1\in C_e^\infty[0,R]$. Consequently, $\ker(\lambda\mathbf I-\mathbf L)=\langle\mathbf f_\lambda^*\rangle$ for $\lambda=1,4$ and $\{1,4\}\subseteq\sigma_p(\mathbf L)\cap\overline{\mathbb H}$.
					
					Now, we aim to show $\sigma_p(\mathbf L)\cap\overline{\mathbb H}\subseteq\{1,4\}$. This direction of the argument is highly nontrivial and constitutes one of the major novelties of this paper. In an effort to aid the reader, we will take a moment to summarize the remainder of this argument before proceeding. The first step is to establish a connection between the existence of eigenfunctions of $\mathbf L$ and the existence of analytic solutions of a particular ODE, where it suffices to restrict our attention to a backward light cone. After having established this connection, we transform this equation into another `supersymmetric' problem, where the solutions corresponding to the known eigenvalues $\lambda=1,4$ transform to trivial solutions. As will be seen, there is a correspondence between analytic solutions of both ODEs. The third step is to then analyze this new equation and show that it does not have any nontrivial analytic solutions.  As a consequence of the first step, we are able to then exclude the existence of eigenvalues with the exception of $\lambda=1,4$.\\
					
					\textit{Step 1: Reduction to an ODE Problem.} We argue by contradiction. Suppose $\lambda\in\sigma_p(\mathbf L)\cap\overline{\mathbb H}$, $\lambda \neq 1,4$. Thus, there exists $\mathbf f_\lambda=(f_{\lambda,1},f_{\lambda_2})\in\mathcal D(\mathbf L)\setminus\{\mathbf 0\}$ with $(\lambda\mathbf I-\mathbf L)\mathbf f_\lambda=\mathbf 0$. A direct calculation shows that $f_{\lambda,1}$ solves the ODE
						\begin{equation}
				 			f_{\lambda,1}''(y)+\frac{c_{11}(y)+(\lambda+2)c_{21}(y)}{c_{12}(y)}f_{\lambda,1}'(y)+\frac{(\lambda+2)(c_{20}(y)-\lambda-2)+ V(y)}{c_{12}(y)}f_{\lambda,1}(y)=0 \label{homogeneous spectral equation}
						\end{equation}
					weakly on the interval $(0,R)$ and $f_{\lambda,2}=(\lambda+2)f_{\lambda,1}$. Furthermore, since $f_{\lambda,1}\in H^6_\text{rad}(\mathbb B^7)$, Sobolev embedding implies $f_{\lambda,1}\in C^2(0,R)$. Thus, $f_{\lambda,1}$ is a classical solution of Equation \eqref{homogeneous spectral equation} on $(0,R)$. We transform this equation into standard form. Thereby, we restrict ourselves to $0 \leq y \leq \frac{1}{2}$ and use the correspondence between hyperboloidal similarity coordinates and standard similarity coordinates inside the backward light cone. We recall that standard similarity coordinates are defined via the map
						$$
							(\tau,\rho)\mapsto\big(T-e^{-\tau},e^{-\tau}\rho\big)
						$$
					which maps the infinite cylinder $\mathbb R\times[0,1]$ into the backward light cone with vertex $(T,0)$. First,  $v(s,y):=e^{(\lambda+2)s}f_{\lambda,1}(y)$ is in $C^2(\mathbb R\times(0,R))$ and is a classical solution of the equation
						$$
							\partial_s^2 v(s,y)=c_{11}(y)\partial_y v(s,y)+c_{12}(y)\partial_y^2 v(s,y)+c_{20}(y)\partial_s v(s,y)+c_{21}(y)\partial_y\partial_s v(s,y)+V(y)v(s,y)
						$$
					for $(s,y)\in\mathbb R \times(0,R)$. Upon setting
						$$
							V_0(y)=-\frac{48(21-5y^2)}{(5y^2+3)^2},
						$$
					and defining $v(s,y)=:w\Big(s-\log\big(-h(y)\big),-\dfrac{y}{h(y)}\Big)$ for $y \in (0,\frac{1}{2})$, we find that $w$ is a classical solution of the equation
						$$
							\Big(\partial_\tau^2+2\rho\partial_\rho\partial_\tau-\big(1-\rho^2\big)\partial_\rho^2-\frac{6}{\rho}\partial_\rho+\partial_\tau+2\rho\partial_\rho+V_0(\rho)\Big)w(\tau,\rho)=0
						$$
					on $\mathbb R \times(0,1)$. In terms of $f_{\lambda,1}$, we have
						\begin{align*}
							w(\tau,\rho)&=e^{(\lambda+2)\tau}\Big(\frac{2}{2+\sqrt{2(1+\rho^2)}}\Big)^{\lambda+2}f_{\lambda,1}\Big(\frac{2\rho}{2+\sqrt{2(1+\rho^2)}}\Big) =:e^{(\lambda+2)\tau}f(\rho).
						\end{align*}
					Thus, $f$ is a classical solution of the ODE
						\begin{equation}
							(1-\rho^2)f''(\rho)+\Big(\frac{6}{\rho}-2(\lambda+3)\rho\Big)f'(\rho)-\Big((\lambda+2)(\lambda+3)-\frac{48(21-5\rho^2)}{(5\rho^2+3)^2}\Big)f(\rho)=0. \label{qwe spectral ode}
						\end{equation}
					Smoothness of the coefficients implies $f\in C^\infty(0,1)$. Observe that $\rho=0$ is a regular singular point of Equation \eqref{qwe spectral ode} with Frobenius indices $\{0,-5\}$ and so is $\rho=1$ with Frobenius indices $\{0,1-\lambda\}$. The Frobenius analysis in the proof of Proposition 3.2 of \cite{G21} with $d=5$ allows us to conclude that $f\in C^\infty[0,1]$. Now, our goal is to show that for $\lambda \neq 1,4$, Equation \eqref{qwe spectral ode} does not have solutions in $C^\infty[0,1]$.\\
					
					\textit{Step 2: Supersymmetric Removal.} First, observe that the functions 
						\begin{align*}
							f(\rho;1):=\frac{7-15\rho^2}{(5\rho^2+3)^3}, \quad 
							f(\rho;4):=\frac{1}{(5\rho^2+3)^3}
						\end{align*}
					are indeed solutions in $C^\infty[0,1]$ with $\lambda=1$ and $\lambda=4$ respectively. To investigate $C^\infty[0,1]$ solutions of Equation \eqref{qwe spectral ode} with $\lambda\neq1,4$, we first `remove' the eigenvalues $\lambda=1$ and $\lambda=4$ by performing so-called \textit{supersymmetric} transformation. For an in-depth discussion of this procedure, we refer the reader to \cite{GS21}, Appendix B and \cite{G18}, Section 2.5. We begin by making the change of variables
					$$
						f(\rho)=:\rho^{-3}(1-\rho^2)^{-\frac{\lambda}{2}}g(\rho)
					$$
				which transforms Equation \eqref{qwe spectral ode} into
					\begin{equation}
						-g''(\rho)-\frac{2 \left(95\rho^6-729\rho^4+405\rho^2-27\right)}{\rho^2 \left(1-\rho^2\right)^2
   \left(5\rho^2+3\right)^2}g(\rho)=-\frac{(\lambda+2) (\lambda -4)}{\left(1-\rho^2\right)^2}g(\rho). \label{first transform qwe spectral ode}
					\end{equation}
				Consequently, $g(\rho;4)=\rho^3(1-\rho^2)^2f(\rho;4)$ is a solution of Equation \eqref{first transform qwe spectral ode} with $\lambda=4$. Our goal is to factor the left-hand side of Equation \eqref{first transform qwe spectral ode} using the solution $g(\rho;4)$. Following the standard procedure, see the above mentioned references,  the left-hand side can be factored as
					\begin{align*}
						-\partial_\rho^2&-\frac{2 \left(95\rho^6-729\rho^4+405\rho^2-27\right)}{\rho^2 \left(1-\rho^2\right)^2\left(5\rho^2+3\right)^2}=\Big(-\partial_\rho-b(\rho) \Big)\Big(\partial_\rho-b(\rho)\Big).
					\end{align*}
where $b(\rho):= \frac{9-36\rho^2 -5\rho^4}{3\rho+2\rho^3-5\rho^5}$. Setting $\tilde g:=g'-b g$ and defining $\tilde g(\rho)=:\rho^3(1-\rho^2)^{\frac{\lambda}{2}}\tilde f(\rho)$ produces the new equation
					$$
						(1-\rho^2)\tilde f''(\rho)+\Big(\frac{6}{\rho}-2(\lambda+3)\rho\Big)\tilde f'(\rho)-\Big((\lambda+2)(\lambda+3)-\frac{18 \left(5\rho^4+30 \rho^2-3\right)}{\rho^2 \left(5\rho^2+3\right)^2}\Big)\tilde f(\rho)=0. 
					$$
				Observe that for $\lambda =4$, the above  transformations yield $\tilde f(\rho;4) = 0$. In this sense, we have `removed' the eigenvalue $\lambda=4$ by transforming the corresponding solution, $f(\rho;4)$, into the trivial solution. For $\lambda = 1$, we obtain the solution $\tilde f(\rho;1):=-\frac{3\rho}{(3+5\rho^2)^2}$. Repeating the same transformations but with the factorization given by the solution $\tilde f(\rho;1)$ instead produces the new equation
					\begin{equation}
						(1-\rho^2)\hat f''(\rho)+\Big(\frac{6}{\rho}-2(\lambda+3)\rho\Big)\hat f'(\rho)-\Big((\lambda+2)(\lambda+3)-\frac{6 \left(35 \rho^4+18 \rho^2-21\right)}{\rho^2 \left(5 \rho^2+3\right)^2}\Big)\hat f(\rho)=0 \label{susy spectral eqn}
					\end{equation}
				for the corresponding new dependent variable $\hat f$. \\
					
				\textit{Step 3: Analysis of Eq.~\eqref{susy spectral eqn}.} Now, we show that Equation \eqref{susy spectral eqn} has no non-trivial analytic solutions for $\lambda\in\overline{\mathbb H}$. We achieve this by expanding any nontrivial, analytic solution around the regular singular point $\rho=0$ and showing that if $\lambda\in\overline{\mathbb H}$, then this solution cannot be analytically continued past $\rho=1$. 
				
				Observe that Equation \eqref{susy spectral eqn} has seven regular singular points: $\rho=0,\pm1,\pm i\sqrt{\frac{3}{5}},$ and $\pm\infty$. We begin our analysis by first reducing the number of regular singular points to four via the transformation
					$$
						\rho=\sqrt{\frac{3x}{8-5x}},\quad \hat f(\rho)=x(8-5x)^{\frac{\lambda+2}{2}}y(x)
					$$
				which transforms Equation \eqref{susy spectral eqn} into its Heun form
					\begin{equation}
						y''(x)+\Big(\frac{11}{2x}+\frac{\lambda}{x-1}+\frac{1}{2(x-\frac{8}{5})}\Big)y'(x)+\frac{5(\lambda+2)(\lambda+8)x-(\lambda+26)(3\lambda+4)}{20x(x-1)(x-\frac{8}{5})}y(x)=0 \label{heun spectral eqn}
					\end{equation}
				with the four regular singular points $x=0,1,\frac{8}{5},\infty$. Frobenius theory implies that any $y\in C^\infty[0,1]$ solving Equation \eqref{heun spectral eqn} is analytic on $[0,1]$. In addition, any analytic solution of Equation \eqref{heun spectral eqn} yields an analytic solution of Equation \eqref{susy spectral eqn} as well as the converse. Thus, to exclude the existence of analytic solutions of Equation \eqref{susy spectral eqn}, we exclude the existence of analytic solutions of Equation \eqref{heun spectral eqn}. Thereby, we apply a similar strategy as in  \cite{CDGH15}, \cite{G18}, \cite{GS21} and \cite{CGS21}.
				
				At $x=0$, the Frobenius indices are $\{0,-\frac{9}{2}\}$. Without loss of generality, we may assume that a solution for a fixed $\lambda$, denoted by $y(\cdot;\lambda)$, has the expansion
					\begin{equation}
						y(x;\lambda)=\sum_{n=0}^\infty a_n(\lambda)x^n,\;a_0(\lambda)=1 \label{series}
					\end{equation}
				near $x=0$. Since the finite regular singular points of Equation \eqref{heun spectral eqn} are $x=0,1,\frac{8}{5}$, $y(\cdot;\lambda)$ fails to be analytic at $x=1$ precisely when the radius of convergence of $\eqref{series}$ is equal to one. To that end, we derive a recurrence relation for the coefficients given by
					\begin{equation}
						a_{n+2}(\lambda)=A_n(\lambda)a_{n+1}(\lambda)+B_n(\lambda)a_n(\lambda) \label{recurrence}
					\end{equation}
				where
					$$
						A_n(\lambda)=\frac{3 \lambda ^2+114 \lambda +52 n^2+32 \lambda  n+348 n+400}{16 (n+2) (2 n+13)}
					$$
				and
					$$
						B_n(\lambda)=-\frac{5 (\lambda +2 n+2) (\lambda +2 n+8)}{16 (n+2) (2 n+13)}
					$$
				with $a_{-1}(\lambda)=0$. For $n\in\mathbb N_0$, we define 
					$$
						r_n(\lambda):=\frac{a_{n+1}(\lambda)}{a_n(\lambda)}.
					$$
				Since $\lim_{n\to\infty}A_n(\lambda)=\frac{13}{8}$ and $\lim_{n\to\infty}B_n(\lambda)=-\frac{5}{8}$, the so-called characteristic equation of Equation \eqref{recurrence} is
					$$
						t^2-\frac{13}{8}t+\frac{5}{8}=0
					$$
				which has solutions $t_1=\frac{5}{8}$ and $t_2=1$. Poincar\'e's theorem for difference equations, see \cite{E05} or \cite{GS21} Appendix A, implies that either $a_n(\lambda)$ is zero eventually in $n$ or 
					\begin{equation}
						\lim_{n\to\infty}r_n(\lambda)=\frac{5}{8} \label{antigoal}
					\end{equation}
				or
					\begin{equation}
						\lim_{n\to\infty}r_n(\lambda)=1. \label{goal}
					\end{equation}
				We aim to prove that Equation \eqref{goal} holds true. 
				
				First, observe that $a_n(\lambda)$ cannot eventually be zero since, otherwise, backwards substitution would allow us to conclude that $a_0(\lambda)=0$ which is in clear contradiction with $a_0(\lambda)=1$. To rule out Equation \eqref{antigoal}, we first derive a recurrence relation for $r_n(\lambda)$ given by
					\begin{equation}
						r_{n+1}(\lambda)=A_n(\lambda)+\frac{B_n(\lambda)}{r_n(\lambda)} \label{r recurrence}
					\end{equation}
				with initial condition
					$$
						r_0(\lambda)=\frac{a_1(\lambda)}{a_0(\lambda)}=A_{-1}(\lambda)=\frac{1}{176} (\lambda +26) (3 \lambda +4).
					$$
				Furthermore, we define an approximate solution of Equation \eqref{r recurrence} by
					$$
						\tilde r_n(\lambda):=\lambda ^2 \left(\frac{3}{16 (n+1) (2 n+11)}+\frac{9}{4000 n^2}\right)+\lambda 
   \left(\frac{16 n+41}{8 (n+1) (2 n+11)}-\frac{1}{13 n}\right)+\frac{4 n+19}{4 n+22} \label{quasisolution}
					$$
				for $n\in\mathbb N$ which we call a \textit{quasisolution}. This quasisolution is intended to mimic the behavior of the actual solution $r_n(\lambda)$ for large $n$. We note that this quasisolution is not the \textit{canonical} quasisolution one would consider following the methods in earlier works. We will discuss this point in detail after the conclusion of this proof. Observe that for fixed $\lambda\in\overline{\mathbb H}$, $\lim_{n\to\infty}\tilde r_n(\lambda)=1$. If indeed $r_n(\lambda)$ remains close to the quasisolution, then we can exclude Equation \eqref{antigoal} implying that Equation \eqref{goal} must hold. To prove this, we define
					$$
						\delta_n(\lambda):=\frac{r_n(\lambda)}{\tilde r_n(\lambda)}-1 \label{delta relation}
					$$
				to measure the difference between $r_n(\lambda)$ and the quasisolution and derive a recurrence relation for this difference given by
					$$
						\delta_{n+1}(\lambda)=\varepsilon_n(\lambda)-C_n(\lambda)\frac{\delta_n(\lambda)}{1+\delta_n(\lambda)}
					$$
				where
					$$
						\varepsilon_n(\lambda)=\frac{A_n(\lambda)\tilde r_n(\lambda)+B_n(\lambda)}{\tilde r_n(\lambda)\tilde r_{n+1}(\lambda)}-1
					$$
				and
					\begin{equation}
						C_n(\lambda)=\frac{B_n(\lambda)}{\tilde r_n(\lambda)\tilde r_{n+1}(\lambda)}. \label{Cn}
					\end{equation}
			
				For $n\geq5$, we have the following estimates
					\begin{align*}
				|\delta_5(\lambda)|\leq\frac{1}{4}, \quad 
						|\varepsilon_n(\lambda)|\leq\frac{64+5n}{120(4+n)}, \quad 
					|C_n(\lambda)|\leq\frac{56+25n}{40(4+n)}.
					\end{align*}
				We will prove the third estimate while the first and second are established analogously. First, we bring $C_n(\lambda)$ into the form of a rational function, namely $C_n(\lambda)=\frac{P_1(n,\lambda)}{P_2(n,\lambda)}$ for polynomials $P_1, P_2\in\mathbb Z[n,\lambda]$. Explicit expressions are provided in Appendix \ref{explicit}. We can prove the estimate by first establishing it on the imaginary line and then extending it to all of $\overline{\mathbb H}$. This extension can be achieved by showing that $C_n(\lambda)$ is analytic and polynomially bounded on $\overline{\mathbb H}$ at which point the Phragm\'en-Lindel\"of principle achieves the desired extension. 
				
				Observe that for $t\in\mathbb R$, The inequality 
				\[|C_n(it)|\leq\frac{56+25n}{40(4+n)}\] is equivalent to the inequality 
				\[(40(4+n))^2|P_1(n,it)|^2-(56+25n)^2|P_2(n,it)|^2\leq0.\] For $t\in\mathbb R$ and $n\geq5$, a direct calculation shows that the coefficients of 
				\[(40(4+n))^2|P_1(n,it)|^2-(56+25n)^2|P_2(n,it)|^2\] are manifestly negative which establishes the desired estimate on the imaginary line. Now, we aim to extend the estimate to all of $\overline{\mathbb{H}}$. As $C_n(\lambda)$ is a rational function of polynomials in $\mathbb Z[n,\lambda]$, it is polynomially bounded. Furthermore, a direct calculation of the zeros of $P_2(n,\lambda)$ shows that they are contained in $\mathbb C\setminus\overline{\mathbb H}$ implying the analyticity of $C_n(\lambda)$ in $\overline{\mathbb{H}}$. Thus, the Phragm\'en-Lindel\"of principle extends the estimate to all of $\overline{\mathbb{H}}$. 
				
				By an inductive argument, we establish
					$$
						|\delta_n(\lambda)|\leq\frac{1}{4}
					$$
				for all $n\geq5$. Now, suppose Equation \eqref{antigoal} holds true. Then
					$$
						\frac{1}{4}\geq\lim_{n\to\infty}|\delta_n(\lambda)|=\lim_{n\to\infty}\bigg|\frac{r_n(\lambda)}{\tilde r_n(\lambda)}-1\bigg|=\frac{3}{8}
					$$ 
				which is a clear contradiction. Thus, it must be the case that Equation \eqref{goal} holds. Thus, Equation \eqref{heun spectral eqn} does not have solutions which are analytic at one.  Consequently, we can exclude analytic solution of Equation \eqref{susy spectral eqn}, which contradicts our assumption.
\end{proof}
		
			\begin{remark} \label{spectral difficulty}
				A natural first guess for a quasisolution would be
					$$
						\tilde r_n(\lambda)=\lambda ^2 \left(\frac{3}{16 (n+1) (2 n+11)}\right)+\lambda 
   \left(\frac{16 n+41}{8 (n+1) (2 n+11)}\right)+\frac{4 n+19}{4 n+22}
					$$
				following the methods in \cite{CDGH15}, \cite{G18}, and \cite{GS21}. The quadratic and linear terms in $\lambda$ come from studying the large $|\lambda|$ behavior of $A_n(\lambda)$ while the constant in $\lambda$ term comes from fitting the first few iterates of $r_n(\lambda)$ for small $|\lambda|$. However, it appears that this quasisolution does not work when trying to obtain any reasonable estimates on $\delta_5(\lambda)$, $\varepsilon_n(\lambda)$, and $C_n(\lambda)$. Lower-order corrections to the linear and quadratic terms, to the best of our knowledge, appear to be essential in obtaining such estimates. This may be important for solving future spectral problems with this method.
			\end{remark}

		\subsection{Decay of the Linearized Flow} \label{Control of the Linearized Flow}
			Lemma \ref{spectrum} shows that $1,4\in\sigma_p(\mathbf L)$ are isolated. This allows us to define the following Riesz projections.
				
				\begin{definition} \label{Riesz projection defn}
					Let $\gamma_1:[0,2\pi]\to\mathbb C$ and $\gamma_4:[0,2\pi]\to\mathbb C$ be defined by $\gamma_1(t)=1+\frac{1}{2}e^{it}$ and $\gamma_4(t)=4+\frac{1}{2}e^{it}$. Then we set
						$$
							\mathbf P_j:=\frac{1}{2\pi i}\int_{\gamma_j}\mathbf R_\mathbf L(\lambda)d\lambda,\;j=1,4.
						$$
				\end{definition}
		
				\begin{proposition} \label{projection}
					The operators $\mathbf P_j\in\mathcal B(\mathcal H_R)$, $j=1,4$, commute with the semigroup $\big(\mathbf S(s)\big)_{s\geq0}$ and are mutually transversal, i.e., 
						$$
							\mathbf P_1\mathbf P_4=\mathbf P_4\mathbf P_1=\mathbf 0.
						$$
					Furthermore, we have
						$$
							\range\mathbf P_j=\langle\mathbf f_j^*\rangle
						$$
					and
						$$
							\mathbf S(s)\mathbf P_j\mathbf f=e^{js}\mathbf P_j\mathbf f,\;s\geq0,\;\mathbf f\in\mathcal H_R,\;j=1,4.
						$$
				\end{proposition}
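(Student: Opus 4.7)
The plan is to invoke the standard machinery of Riesz projections associated with isolated spectral points of a semigroup generator, combined with a targeted ODE analysis to pin down the algebraic multiplicities. By Lemma \ref{restrict to point spectrum} the points $1,4 \in \sigma_p(\mathbf L)$ are isolated and of finite algebraic multiplicity, so each contour $\gamma_j$ in Definition \ref{Riesz projection defn} encloses exactly one spectral point at which the resolvent $\mathbf R_{\mathbf L}(\lambda)$ has a pole. Hence the integrals defining $\mathbf P_j$ converge in operator norm and yield bounded projections on $\mathcal H_R$.

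For the commutation and transversality properties, I would proceed as follows. Since $(\mathbf S(s))_{s\geq 0}$ commutes with the resolvent $\mathbf R_{\mathbf L}(\lambda)$ for all $\lambda$ in the resolvent set, pulling $\mathbf S(s)$ through the contour integral gives $\mathbf S(s)\mathbf P_j = \mathbf P_j \mathbf S(s)$. For transversality, since $\gamma_1$ and $\gamma_4$ are disjoint and one can deform them in the resolvent set without crossing spectrum, a standard application of Cauchy's theorem together with the resolvent identity $\mathbf R_{\mathbf L}(\lambda)\mathbf R_{\mathbf L}(\mu) = (\mu - \lambda)^{-1}[\mathbf R_{\mathbf L}(\lambda) - \mathbf R_{\mathbf L}(\mu)]$ yields $\mathbf P_1 \mathbf P_4 = \mathbf P_4 \mathbf P_1 = \mathbf 0$.

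The key step, and where I expect the main work, is identifying $\range \mathbf P_j = \langle \mathbf f_j^*\rangle$. From general spectral theory $\range \mathbf P_j$ coincides with the generalized eigenspace $\bigcup_{n\geq 1}\ker(j\mathbf I - \mathbf L)^n$, and Proposition \ref{spectrum} already shows $\ker(j\mathbf I - \mathbf L) = \langle \mathbf f_j^*\rangle$. What remains is to exclude Jordan chains, that is, to show that the equation $(j \mathbf I - \mathbf L)\mathbf g = \mathbf f_j^*$ has no solution in $\mathcal D(\mathbf L)$ for $j = 1,4$. This reduces, exactly as in Step 1 of the proof of Proposition \ref{spectrum}, to an inhomogeneous ODE for $g_1$ on $(0,R)$ of the form of \eqref{homogeneous spectral equation} at $\lambda = j$ with right-hand side proportional to the first component of $\mathbf f_j^*$, together with $g_2 = (j+2)g_1 + f_{j,1}^*$. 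Passing to the conformal variables used in Step 1, one obtains an inhomogeneous Heun-type equation whose homogeneous solutions are, at $\lambda = 1,4$, the explicit profiles $f(\rho;1)$ and $f(\rho;4)$, together with a second, singular Frobenius solution at $\rho = 0$. The hard part will be to rule out any element of $C^\infty[0,1]$ solving the inhomogeneous equation: this is done by a variation-of-parameters computation combined with a Frobenius analysis near $\rho = 0$ and $\rho = 1$ showing that the particular solution is forced to inherit the singular behavior of one of the homogeneous solutions at one of the endpoints, obstructing the smoothness required for membership in $\mathcal D(\mathbf L) \subset \mathcal H_R$.

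Once $\range \mathbf P_j = \langle \mathbf f_j^*\rangle$ is established, the semigroup identity is immediate: any $\mathbf P_j \mathbf f$ is a scalar multiple of the eigenfunction $\mathbf f_j^*$, on which $\mathbf L$ acts as multiplication by $j$, so applying the commutation property and the elementary fact that $\mathbf S(s)\mathbf f_j^* = e^{js}\mathbf f_j^*$ (obtained by differentiating $s \mapsto e^{-js}\mathbf S(s)\mathbf f_j^*$ and using that $\mathbf f_j^* \in \mathcal D(\mathbf L)$) yields $\mathbf S(s)\mathbf P_j \mathbf f = e^{js}\mathbf P_j\mathbf f$ for all $s \geq 0$ and $\mathbf f \in \mathcal H_R$.
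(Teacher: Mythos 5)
Your proposal follows the same overall architecture as the paper: abstract Riesz-projection theory for boundedness, commutation with the semigroup, and transversality; then an ODE argument to rule out Jordan blocks once $\ker(j\mathbf I-\mathbf L)=\langle\mathbf f_j^*\rangle$ is known from Proposition~\ref{spectrum}. The semigroup identity at the end is also the same. Two points deserve comment.

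First, a genuine difference in route: you propose to pass to the conformal variable $\rho$ and analyze an inhomogeneous Heun-type equation on $[0,1]$, mirroring Step~1 of the proof of Proposition~\ref{spectrum}. The paper instead works directly with the inhomogeneous ODE \eqref{inhom spectral eqn} in the hyperboloidal variable $y$, restricted to $(0,\tfrac12)$, with the explicit fundamental system $\phi_j=f^*_{j,1}$ and a second solution $\psi_j$ built from an antiderivative $I_j$ of the first-order coefficient. Both routes should work in principle, but the paper's choice lets it read off the asymptotics of $\psi_j$ and the Wronskian at $y=0$ and $y=\tfrac12$ in closed form, which is exactly what is needed next.

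Second, and more importantly, the crux of the argument is left vague. You say the particular solution ``is forced to inherit the singular behavior of one of the homogeneous solutions at one of the endpoints.'' This is not automatic: the variation-of-parameters solution \emph{is} regular at the left endpoint (after fixing $c_2^{(j)}=0$), and regularity at the other endpoint $y=\tfrac12$ is equivalent to the solvability condition
\[
\int_0^{1/2}\frac{\phi_j(\rho)}{W(\phi_j,\psi_j)(\rho)}\frac{G_j(\rho)}{c_{12}(\rho)}\,d\rho=0.
\]
The actual work is to show this integral is nonzero. The paper does this by a sign argument for $j=4$ (the integrand has a definite sign) and by explicit evaluation for $j=1$. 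Without identifying this integral condition and establishing that it fails, the Jordan-block exclusion is not complete. Finally, a small algebraic slip: since $\tilde{\mathbf L}=\tilde{\mathbf L}_7-2\mathbf I+\mathbf L'$, the first component of $(j\mathbf I-\mathbf L)\mathbf g=\mathbf f_j^*$ gives $g_2=(j+2)g_1-f^*_{j,1}$, not $+f^*_{j,1}$.
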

				\begin{proof}
					Boundedness, transversality, and commuting with semigroup follow from abstract theory, see \cite{K95} and \cite{EN00}. In the following, we handle both cases $j=1$ and $j=4$ simultaneously until the very end at which point the arguments slightly diverge.
					
					We aim to show $\range\mathbf P_j=\langle\mathbf f_j^*\rangle$. The inclusion $\langle\mathbf f_j^*\rangle\subseteq\range\mathbf P_j$ follows from abstract theory, see \cite{K95}. For the reverse inclusion, observe that $\mathbf P_j$ decomposes $\mathcal H_R$ as $\mathcal H_R=\range\mathbf P_j\oplus\range(\mathbf I-\mathbf P_j)$ and the operator $\mathbf L$ decomposes into the parts $\mathbf L|_{\range\mathbf P_j}$ and $\mathbf L|_{\range(\mathbf I-\mathbf P_j)}$ acting on $\range\mathbf P_j$ and $\range(\mathbf I-\mathbf P_j)$ respectively. The spectra of these operators are
						$$
							\sigma(\mathbf L|_{\range\mathbf P_j})=\{j\},\;\sigma(\mathbf L|_{\range(\mathbf I-\mathbf P_j)})=\sigma(\mathbf L)\setminus\{j\}.
						$$
					We claim that $\range\mathbf P_j$ is finite-dimensional. To see this, suppose that $\dim\range\mathbf P_j=\infty$. Then, Theorem 5.28 of \cite{K95} implies that $j\in\sigma_e(\mathbf L)$. Since $\mathbf L'$ is compact and the essential spectrum is stable under compact perturbations, we also have that $j\in\sigma_e(\mathbf L-\mathbf L')$. This is clearly a contradiction since $\mathbf L-\mathbf L'=\mathbf L_7-2\mathbf I$ and $\sigma(\mathbf L_7-2\mathbf I)\subseteq\{z\in\mathbb C:\Re z\leq-\frac{3}{2}\}$. 
					
					Thus, the part $\mathbf L|_{\range\mathbf P_j}$ acts on a finite-dimensional Hilbert space with $\sigma(\mathbf L|_{\range\mathbf P_j})=\{j\}$. Consequently, $j\mathbf I-\mathbf L|_{\range\mathbf P}$ is nilpotent since $0$ is its only spectral point and is an eigenvalue. So, there exists a minimal $\ell_j\in\mathbb N$ with $(j\mathbf I-\mathbf L|_{\range\mathbf P_j})^{\ell_j}\mathbf f=\mathbf 0$ for all $\mathbf f\in\range\mathbf P_j$. If $\ell_j=1$, then the reverse inclusion follows. 
					
					Suppose $\ell_j\neq1$. Then there exists a nonzero $\mathbf f_j=(f_{j,1},f_{j,2})\in\range\mathbf P_j\subset H^6_\text{rad}(\mathbb B^7_R)\times H^5_\text{rad}(\mathbb B^7_R)\subset C^2(0,R)\times C^1(0,R)$ such that $\mathbf f_j\in\ker(j\mathbf I-\mathbf L|_{\range\mathbf P_j})\subseteq\ker(j\mathbf I-\mathbf L)$. By Lemma \ref{spectrum}, we have $\ker(j\mathbf I-\mathbf L)=\langle\mathbf f_j^*\rangle$. Thus, $\mathbf f_j$ solves the equation
						$$
							\alpha \mathbf f_j^*=(j\mathbf I-\mathbf L)\mathbf f_j.
						$$
					for some $\alpha\in\mathbb C\setminus\{0\}$. Without loss of generality, we take $\alpha=1$. Consequently, the first component of  $\mathbf f_j$ solves the ODE
						\begin{equation}
							 f_{j,1}''(y)+p_j(y)f_{j,1}'(y)+q_j(y)f_{j,1}(y)=\frac{G_j(y)}{c_{12}(y)} \label{inhom spectral eqn}
						\end{equation}
					where
						$$
							p_j(y)=\frac{c_{11}(y)+(j+2)c_{21}(y)}{c_{12}(y)},
						$$
						$$
							q_j(y)=\frac{(j+2)(c_{20}(y)-j-2)+V(y)}{c_{12}(y)},
						$$
					and
						$$
							G_j(y)=c_{21}(y) [f^*_{j,1}]'(y)+(c_{20}(y)-4-2j)f_{j,1}^*(y). \label{Gj}
						$$
					Let $I_j(y)$ be an antiderivative of $p_j(y)$. For instance, the explicit functions
						$$
							I_1(y)=\log\Bigg(\frac{y^6(1-4 y^2)\sqrt{y^2+2} \sqrt{y^2+2\sqrt{y^2+2}+3}}{(y^2+1) \sqrt{3 \sqrt{y^2+2}-y+4} \sqrt{3 \sqrt{y^2+2}+y+4}}\Bigg)
						$$
					and
						$$
							I_4(y)=\log\Bigg(\frac{y^6 (1-4y^2)^4 \sqrt{y^2+2} \sqrt{y^2+2 \sqrt{y^2+2}+3}}{(y^2+1)\big(8 y^2+24 \sqrt{y^2+2}+34\big)^2}\Bigg)
						$$
					suffice. We obtain a fundamental system for the homogeneous equation given by
						\begin{align*}
							&\phi_j(y):=f_{j,1}^*(y)
							\\
							&\psi_j(y):=f_{j,1}^*(y)\int_{\frac{1}{4}}^{y}\exp\big(-I_j(y')\big)f_{j,1}^*(y)^{-2}dy'
						\end{align*}
					where the lower bound of integration in $\psi_j$ is chosen arbitrarily. Furthermore, observe that 
						$$
							\exp\big(-I_j(y)\big)\simeq y^{-6}\Big(\frac{1}{2}-y\Big)^{-j}
						$$
					which implies that for the second solution we have
						$$
							|\psi_1(y)|\simeq y^{-5}\bigg|\log\Big(\frac{1}{2}-y\Big)\bigg|,\;\;\;|\psi_1'(y)|\simeq y^{-6}\Big(\frac{1}{2}-y\Big)^{-1}
						$$
					and
						$$
							|\psi_4(y)|\simeq y^{-5}\Big(\frac{1}{2}-y\Big)^{-3},\;\;\;|\psi_4'(y)|\simeq y^{-6}\Big(\frac{1}{2}-y\Big)^{-4}.
						$$
					Observe that the Wronskian is precisely $\exp\big(-I_j(y')\big)$ up to some constant multiple. This implies that we have
						$$
							|W(\phi_j,\psi_j)(y)|\simeq y^{-6}\Big(\frac{1}{2}-y\Big)^{-j}.
						$$
					Variation of parameters shows that $f_{j,1}$ must be of the form
						\begin{align*}
							f_{j,1}(y)=&c_1^{(j)}\phi_j(y)+c_2^{(j)}\psi_j(y)
							\\
							&-\phi_j(y)\int_0^y\frac{\psi_j(\rho)}{W(\phi_j,\psi_j)(\rho)}\frac{G_j(\rho)}{c_{12}(\rho)}d\rho+\psi_j(y)\int_0^y\frac{\phi_j(\rho)}{W(\phi_j,\psi_j)(\rho)}\frac{G_j(\rho)}{c_{12}(\rho)}d\rho
						\end{align*}
					for $y\in(0,\frac{1}{2})$. Taking the limit $y\to0^+$ yields $c_2^{(j)}=0$. Thus, we are left with 
						$$
							f_{j,1}(y)=c_1^{(j)}\phi_j(y)-\phi_j(y)\int_0^y\frac{\psi_j(\rho)}{W(\phi_j,\psi_j)(\rho)}\frac{G_j(\rho)}{c_{12}(\rho)}d\rho+\psi_j(y)\int_0^y\frac{\phi_j(\rho)}{W(\phi_j,\psi_j)(\rho)}\frac{G_j(\rho)}{c_{12}(\rho)}d\rho
						$$
					Based on the above asymptotics we find
						$$
							\lim_{y\to\frac{1}{2}^-}\int_0^y\frac{\psi_j(\rho)}{W(\phi_j,\psi_j)(\rho)}\frac{G_j(\rho)}{c_{12}(\rho)}d\rho
						$$
					exists. As a consequence, in order to control the third term near $y=\frac{1}{2}$, we must have
						$$
							\int_0^{\frac{1}{2}}\frac{\phi_j(\rho)}{W(\phi_j,\psi_j)(\rho)}\frac{G_j(\rho)}{c_{12}(\rho)}d\rho=0
						$$
					For $j=4$, the integrand has a definite sign. Thus, the above integral vanishing yields a contradiction. For $j=1$, the integral can be computed explicitly and is nonzero which again yields a contradiction. Thus, we must have $(j\mathbf I-\mathbf L)\mathbf f=\mathbf 0$ for all $\mathbf f\in\range\mathbf P_j$ which, with Proposition \ref{spectrum}, implies $\range\mathbf P_j\subseteq\langle\mathbf f_j^*\rangle$. 
					
					Lastly, the claim 
						$$
							\mathbf S(s)\mathbf P_j\mathbf f=e^{js}\mathbf P_j\mathbf f,\;s\geq0,\;\mathbf f\in\mathcal H_R,\;j=1,4
						$$
					is a direct consequence of $\range\mathbf P_j=\langle\mathbf f_j^*\rangle$
				\end{proof}
		
			We now state and prove the main result on the linearized equation.
				\begin{theorem} \label{linear stability}
					Let $\mathbf P:=\mathbf P_1+\mathbf P_4$. Then there exist $\omega_0>0$ and $C\geq1$ such that 
						$$
							\|\mathbf S(s)(\mathbf I-\mathbf P)\mathbf f\|_{\mathcal H_R}\leq Ce^{-\omega_0s}\|(\mathbf I-\mathbf P)\mathbf f\|_{\mathcal H_R}
						$$
					for all $s\geq0$ and all $\mathbf f\in\mathcal H_R$.
				\end{theorem}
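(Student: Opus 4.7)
The plan is to exploit the $\mathbf{L}$-invariant decomposition $\mathcal H_R = \range\mathbf P \oplus \ker\mathbf P$ furnished by the Riesz projection $\mathbf P = \mathbf P_1 + \mathbf P_4$. Since $\mathbf P$ commutes with $\mathbf S(s)$ by Proposition \ref{projection}, both summands are semigroup-invariant, and the behavior on $\range\mathbf P$ is completely described by the identities $\mathbf S(s)\mathbf P_j\mathbf f = e^{js}\mathbf P_j\mathbf f$. The whole task therefore reduces to showing that the part $\mathbf L_s := \mathbf L|_{\ker\mathbf P}$ on the closed subspace $\ker\mathbf P = \range(\mathbf I - \mathbf P)$ generates a semigroup with strictly negative growth bound.

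First, I would invoke standard spectral decomposition theory (see \cite{K95}) to conclude that $\mathbf L_s$ is a closed, densely defined generator with
\[
\sigma(\mathbf L_s) = \sigma(\mathbf L)\setminus\{1,4\}.
\]
Next, I would produce a spectral gap. By Lemma \ref{restrict to point spectrum}, the set $S_\epsilon = \sigma(\mathbf L)\cap\{\Re\lambda \geq -\tfrac{3}{2}+\epsilon\}$ consists of finitely many eigenvalues of finite algebraic multiplicity for every $\epsilon>0$. Proposition \ref{spectrum} tells us that the only such eigenvalues in $\overline{\mathbb H}$ are $1$ and $4$. Combining these two facts, I can choose $\epsilon>0$ small enough and then pick $\omega_0 \in (0,\tfrac{3}{2}-\epsilon)$ avoiding the finitely many eigenvalues of $S_\epsilon$ lying in the strip $-\tfrac{3}{2}+\epsilon \leq \Re\lambda < 0$, so that
\[
\sigma(\mathbf L)\cap\{\lambda \in \C : \Re\lambda > -\omega_0\} = \{1,4\},
\qquad
\sigma(\mathbf L_s)\subset\{\lambda \in \C : \Re\lambda \leq -\omega_0\}.
\]

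Finally, I would convert this spectral bound into a norm bound on the semigroup. This is the only substantive step, since for general $C_0$-semigroups the spectral bound need not coincide with the growth bound. However, in our setting $\mathbf L = (\mathbf L_7 - 2\mathbf I) + \mathbf L'$ with $\mathbf L_7 - 2\mathbf I$ having growth bound $-\tfrac{3}{2}$ and $\mathbf L'$ compact on $\mathcal H_R$, which is exactly the framework of Theorem B.1 of \cite{G21}. Parts (ii) and (iii) of that theorem yield a constant $C\geq 1$ such that
\[
\|\mathbf S(s)(\mathbf I-\mathbf P)\mathbf f\|_{\mathcal H_R} \leq C e^{-\omega_0 s}\|(\mathbf I-\mathbf P)\mathbf f\|_{\mathcal H_R}
\]
for all $s\geq 0$ and $\mathbf f \in \mathcal H_R$, which is the desired statement.

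The only nontrivial obstacle is the passage from spectral information to a semigroup estimate, since the general spectral mapping theorem fails for $C_0$-semigroups. In our case this is circumvented by the compactness of $\mathbf L'$, which ensures that the essential spectrum of $\mathbf L$ coincides with that of $\mathbf L_7 - 2\mathbf I$ and hence lies in $\{\Re\lambda \leq -\tfrac{3}{2}\}$; everything to the right is then isolated point spectrum of finite multiplicity, precisely the configuration handled by Theorem B.1 of \cite{G21}. Verifying that our operator fits those hypotheses is routine, so the theorem reduces to the spectral work already carried out in Proposition \ref{spectrum} and Lemma \ref{restrict to point spectrum}.
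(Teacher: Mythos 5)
Your proposal is correct and follows essentially the same route as the paper: establish a spectral gap to the left of the imaginary axis for $\sigma(\mathbf L)\setminus\{1,4\}$ via Lemma \ref{restrict to point spectrum} and Proposition \ref{spectrum}, then transfer the spectral bound to a semigroup decay estimate on $\range(\mathbf I-\mathbf P)$ by invoking Theorem B.1 of \cite{G21} (the paper cites part (iii); your appeal to both (ii) and (iii) is harmless). The extra discussion of the Riesz decomposition and of why a spectral mapping theorem is needed is accurate and simply makes explicit what the paper compresses into two sentences.
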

				\begin{proof}
					Lemma \ref{restrict to point spectrum} and Proposition \ref{spectrum} imply that $\sup\{\Re\lambda:\lambda\in\mathbb C\setminus S_\epsilon\}<0$. Thus, by Theorem B.1.iii of \cite{G21} we obtain the desired result. 
				\end{proof}
		
	\section{Nonlinear Stability Analysis} \label{Nonlinear Stability Analysis}
		\subsection{Well-Posedness and Decay of the Nonlinear Evolution} \label{Well-Posedness and Decay of the Nonlinear Evolution}
			We now turn our attention to the nonlinear problem
				\begin{equation}
				\begin{cases}
					\partial_s\Phi(s)=\mathbf L\Phi(s)+\mathbf N\big(\Phi(s)\big)
					\\
					\Phi(0)=\Phi_0 \label{NL problem}
				\end{cases}
				\end{equation}
			for initial data $\Phi_0$ contained in a small ball in $\mathcal H_R$. Using with the semigroup $\big(\mathbf S(s)\big)_{s\geq0}$ we appeal to Duhamel's formula and reformulate Equation \eqref{NL problem} as the integral equation
				\begin{equation}
					\Phi(s)=\mathbf S(s)\Phi_0+\int_{0}^s\mathbf S(s-s')\mathbf N\big(\Phi(s')\big)ds'. \label{Duhamel}
				\end{equation}
			As a first step, we prove a mapping property and local Lipschitz bound on the nonlinearity.
	
			\begin{lemma} \label{nonlinear bound}
				We have $\mathbf N:\mathcal H_R\to\mathcal H_R$ and satisfies the bound
					$$
						\|\mathbf N(\mathbf f)-\mathbf N(\mathbf g)\|_{\mathcal H_R}\lesssim\big(\|\mathbf f\|_{\mathcal H_R}+\|\mathbf g\|_{\mathcal H_R}\big)\|\mathbf f-\mathbf g\|_{\mathcal H_R}
					$$
				for all $\mathbf f,\mathbf g\in\mathcal H_R$.
			\end{lemma}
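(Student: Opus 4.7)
The plan is straightforward once one observes that the dimension $d=7$ and the Sobolev regularity $k=5$ on the second component place us above the Banach-algebra threshold for $\mathbb B^7$. First I would note that the only nontrivial component of $\mathbf N(\mathbf f) - \mathbf N(\mathbf g)$ is the second, and factor
\[
N(y, f_1(y)) - N(y, g_1(y)) = a(y)\bigl(f_1(y) + g_1(y)\bigr)\bigl(f_1(y) - g_1(y)\bigr),
\]
where $a(y) := (yh'(y) - h(y))^2 / (1 - h'(y)^2)$. A direct computation from $h(y) = \sqrt{2+y^2} - 2$ gives $1 - h'(y)^2 = 2/(2+y^2)$, which is smooth and strictly positive on $[0,R]$, and in fact $a \in C_e^\infty[0,R]$. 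In particular, multiplication by $a$ defines a bounded operator on $H^5_{\text{rad}}(\mathbb B^7_R)$, since $a$ extends to a smooth radial function on $\overline{\mathbb B^7_R}$.

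Next I would invoke the standard fact that $H^k(\mathbb B^d)$ is a Banach algebra under pointwise multiplication whenever $k > d/2$. With $d = 7$ and $k = 5$ the condition $5 > 7/2$ is satisfied, and therefore for radial $u, v \in H^5_{\text{rad}}(\mathbb B^7_R)$,
\[
\|uv\|_{H^5_{\text{rad}}(\mathbb B^7_R)} \lesssim \|u\|_{H^5_{\text{rad}}(\mathbb B^7_R)} \, \|v\|_{H^5_{\text{rad}}(\mathbb B^7_R)}.
\]
Applying this with $u = f_1 + g_1$ and $v = f_1 - g_1$, composing with multiplication by $a$, and using the continuous embedding $H^6_{\text{rad}}(\mathbb B^7_R) \hookrightarrow H^5_{\text{rad}}(\mathbb B^7_R)$, I obtain
\[
\|N(\cdot,f_1) - N(\cdot,g_1)\|_{H^5_{\text{rad}}(\mathbb B^7_R)} \lesssim \bigl(\|f_1\|_{H^6_{\text{rad}}(\mathbb B^7_R)} + \|g_1\|_{H^6_{\text{rad}}(\mathbb B^7_R)}\bigr) \|f_1 - g_1\|_{H^6_{\text{rad}}(\mathbb B^7_R)}.
\]
In view of the definition of $\|\cdot\|_{\mathcal H_R}$ this is precisely the claimed Lipschitz estimate. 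The mapping property $\mathbf N : \mathcal H_R \to \mathcal H_R$ then follows by specializing to $\mathbf g = \mathbf 0$ and using $\mathbf N(\mathbf 0) = \mathbf 0$.

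I do not expect any substantive obstacle: the regularity level in Section \ref{Functional Setting} was engineered precisely so that $H^5$ sits strictly above the algebra threshold for $\mathbb B^7$, and the only point requiring a moment of care, namely the smoothness of the coefficient $a$ at $y=0$, reduces to the observation that $h$ is an even smooth function of $y$.
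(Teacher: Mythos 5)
Your proof is correct and follows essentially the same route as the paper's: both reduce to the Banach algebra property of $H^5_{\text{rad}}(\mathbb B^7_R)$ (valid since $5>7/2$), after observing that the smooth coefficient $a(y)=(yh'(y)-h(y))^2/(1-h'(y)^2)$ is absorbed into the implied constant. You spell out the smoothness of $a$ more explicitly than the paper does, but this is a minor elaboration rather than a different approach.
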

			\begin{proof}
				Recalling the definition of $\mathbf N$, we find
					\begin{align*}
						\|\mathbf N(\mathbf f)-\mathbf N(\mathbf g)\|_{\mathcal H_R}&=\Big\|N\big(\cdot,f_1(\cdot)\big)-N\big(\cdot,g_1(\cdot)\big)\Big\|_{H_\text{rad}^5(\mathbb B^7_R)}
						\\
						&\lesssim\big\|f_1^2-g_1^2\big\|_{H_\text{rad}^5(\mathbb B^7_R)}
						\\
						&\lesssim\|f_1+g_1\|_{H_\text{rad}^5(\mathbb B^7)}\|f_1-g_1\|_{H_\text{rad}^5(\mathbb B^7_R)}
						\\
						&\lesssim\big(\|\mathbf f\|_{\mathcal H_R}+\|\mathbf g\|_{\mathcal H_R}\big)\|\mathbf f-\mathbf g\|_{\mathcal H_R}
					\end{align*}
				where the second to third line follows from the Banach algebra property of $H_\text{rad}^5(\mathbb B^7_R)$. The claim $\mathbf N:\mathcal H_R\to\mathcal H_R$ follows from $\mathbf N(\mathbf 0)=\mathbf 0$.
			\end{proof}

		Due to the instabilities associated with the eigenvalues $\lambda=1,4$, Equation \eqref{Duhamel} will not, in general, have global solutions that decay. Instead, we consider a modified equation which allows us to correct for these instabilities and achieve global existence and decay. Upon reconnecting to the problem in physical coordinates, we will in fact show that for arbitrary, small perturbations of $u_1^*$, there is a way to adjust this perturbation and a choice of $T$ close to $1$ for which this modification vanishes and the corresponding solution converges to $u_T^*$.

			\begin{definition}
				For $\omega_0$ as in Theorem \ref{linear stability}, we define the Banach space
					$$
						\mathcal X_R:=\{\Phi\in C([0,\infty),\mathcal H_R):\|\Phi\|_{\mathcal X_R}<\infty\}
					$$
				where
					$$
						\|\Phi\|_{\mathcal X_R}:=\sup_{s>0}\Big(e^{\omega_0s}\|\Phi(s)\|_{\mathcal H_R}\Big).
					$$
				Furthermore, we define $\mathbf C_j:\mathcal X_R\times\mathcal H_R\to\range\mathbf P_j$, $j=1,4$ by
					$$
						\mathbf C_j(\Phi,\mathbf f):=\mathbf P_j\bigg(\mathbf f+\int_{0}^\infty e^{-js'}\mathbf N\big(\Phi(s')\big)ds'\bigg)
					$$
				and set $\mathbf C:=\mathbf C_1+\mathbf C_4$.
			\end{definition}

			With this, we study the modified equation
				\begin{equation}
					\Phi(s)=\mathbf S(s)\big[\mathbf f-\mathbf C(\Phi,\mathbf f)\big]+\int_{0}^s\mathbf S(s-s')\mathbf N\big(\Phi(s')\big)ds'. \label{modified Duhamel}
				\end{equation}
			For Equation \eqref{modified Duhamel}, we show that for all sufficiently small data $\mathbf f$, there exists a unique solution in the space $\mathcal X_R$ depending Lipschitz continuously on $\mathbf f$. In other words, the nonlinear problem is globally well-posed for all sufficiently small initial data and the corresponding solutions decay exponentially as $s\to\infty$.
			
			\begin{proposition} \label{mod wp}
				For all sufficiently large $c>0$ and sufficiently small $\delta>0$ and any $\mathbf f\in\mathcal H_R$ satisfying $\|\mathbf f\|_{\mathcal H_R}\leq\frac{\delta}{c}$, there exists a unique solution $\Phi_{\mathbf f}\in C([0,\infty),\mathcal H_R)$ of Equation \eqref{modified Duhamel} that satisfies $\|\Phi_{\mathbf f}(s)\|_{\mathcal H_R}\leq\delta e^{-\omega_0s}$ for all $s\geq0$. Furthermore, the solution map $\mathbf f\mapsto\Phi_{\mathbf f}$ is Lipschitz as a function from a small ball in $\mathcal H_R$ to $\mathcal X_R$.
			\end{proposition}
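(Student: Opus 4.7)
The plan is to solve \eqref{modified Duhamel} by a Banach fixed-point argument in a closed ball of $\mathcal X_R$. For fixed $\mathbf f\in\mathcal H_R$ with $\|\mathbf f\|_{\mathcal H_R}\leq\delta/c$, define
$$
\mathbf K_{\mathbf f}(\Phi)(s):=\mathbf S(s)\big[\mathbf f-\mathbf C(\Phi,\mathbf f)\big]+\int_0^s\mathbf S(s-s')\mathbf N\big(\Phi(s')\big)\,ds',
$$
on the closed ball $\mathcal X_{R,\delta}:=\{\Phi\in\mathcal X_R:\|\Phi\|_{\mathcal X_R}\leq\delta\}$, which is complete in the $\mathcal X_R$-metric. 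The aim is to show that, for $c$ sufficiently large and $\delta$ sufficiently small, $\mathbf K_{\mathbf f}$ maps $\mathcal X_{R,\delta}$ into itself and is a strict contraction there; Banach's theorem then supplies the unique fixed point $\Phi_{\mathbf f}$, and a standard argument gives Lipschitz dependence on $\mathbf f$.

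The cornerstone of the estimates is the decomposition of $\mathbf K_{\mathbf f}(\Phi)(s)$ into its $\mathbf P_1$, $\mathbf P_4$, and $(\mathbf I-\mathbf P)$ components. Using $\mathbf S(s)\mathbf P_j=e^{js}\mathbf P_j$ from Proposition \ref{projection} together with the definition of $\mathbf C_j$, a direct computation collapses the unstable pieces into convergent tail integrals
$$
\mathbf P_j\mathbf K_{\mathbf f}(\Phi)(s)=-e^{js}\mathbf P_j\int_s^\infty e^{-js'}\mathbf N\big(\Phi(s')\big)\,ds',\qquad j\in\{1,4\},
$$
so the role of the correction $\mathbf C(\Phi,\mathbf f)$ is precisely to cancel the two exponentially growing modes $e^s$ and $e^{4s}$ that would otherwise appear in $\mathbf S(s)\mathbf f$. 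For the stable component, Theorem \ref{linear stability} yields the uniform bound $\|\mathbf S(s)(\mathbf I-\mathbf P)\cdot\|_{\mathcal H_R}\leq Ce^{-\omega_0 s}\|\cdot\|_{\mathcal H_R}$.

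Combining these identities with the quadratic bound $\|\mathbf N(\Phi(s'))\|_{\mathcal H_R}\lesssim\|\Phi(s')\|_{\mathcal H_R}^2\leq\delta^2 e^{-2\omega_0 s'}$ from Lemma \ref{nonlinear bound} (using $\mathbf N(\mathbf 0)=\mathbf 0$), each of the three components of $\mathbf K_{\mathbf f}(\Phi)(s)$ decays at least like $e^{-\omega_0 s}$: the projected tails in fact behave like $e^{-2\omega_0 s}$, while the stable piece is bounded by $Ce^{-\omega_0 s}(\delta/c+C'\delta)$. One concludes
$$
\|\mathbf K_{\mathbf f}(\Phi)\|_{\mathcal X_R}\leq\tfrac{C\delta}{c}+C'\delta^2\leq\delta
$$
after choosing first $c$ large and then $\delta$ small. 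The local Lipschitz bound in Lemma \ref{nonlinear bound} produces the analogous contraction estimate $\|\mathbf K_{\mathbf f}(\Phi_1)-\mathbf K_{\mathbf f}(\Phi_2)\|_{\mathcal X_R}\lesssim\delta\|\Phi_1-\Phi_2\|_{\mathcal X_R}$. Lipschitz dependence $\mathbf f\mapsto\Phi_{\mathbf f}$ is obtained by subtracting the fixed-point identities for two data $\mathbf f_1,\mathbf f_2$ and absorbing the resulting $O(\delta)\|\Phi_{\mathbf f_1}-\Phi_{\mathbf f_2}\|_{\mathcal X_R}$ term on the left.

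No step here is genuinely hard. The routine bookkeeping consists in verifying that the tail integrals converge in $\mathcal H_R$, which is guaranteed by the gap $j+2\omega_0>0$ for $j\in\{1,4\}$; that $\mathbf K_{\mathbf f}(\Phi)\in C([0,\infty),\mathcal H_R)$, which is immediate from strong continuity of $\mathbf S(\cdot)$ and dominated convergence on the Duhamel integral; and that the Riesz projectors $\mathbf P_j\in\mathcal B(\mathcal H_R)$ from Proposition \ref{projection} deliver operator-norm constants independent of $\delta,c$. The genuine obstacle actually sits upstream of this proposition: it is the cancellation identity displayed above, which relies on the sharp spectral picture established in Proposition \ref{spectrum} and on the eigen-mode action of Proposition \ref{projection}. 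Granted those, the present proof is a standard Duhamel/contraction argument.
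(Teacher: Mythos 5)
Your proposal is correct and follows essentially the same route as the paper: define $\mathbf K_{\mathbf f}$ on the closed ball $\{\|\Phi\|_{\mathcal X_R}\leq\delta\}$, decompose into $\mathbf P_1$, $\mathbf P_4$, and $(\mathbf I-\mathbf P)$ pieces, use the cancellation identity to reduce the unstable components to convergent tail integrals, bound each piece via Lemma \ref{nonlinear bound} and Theorem \ref{linear stability}, contract for $c$ large and $\delta$ small, and extract Lipschitz dependence by subtracting the fixed-point equations. The only cosmetic slip is in the sentence announcing the bound on the stable piece, which should read $Ce^{-\omega_0 s}(\delta/c + C'\delta^2)$; your displayed final estimate already has the correct $\delta^2$.
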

			\begin{proof}
				Set 
					$$
						\mathcal Y_\delta:=\{\Phi\in\mathcal X_R:\|\Phi\|_{\mathcal X_R}\leq\delta\}
					$$ 
				and define the map
					$$
						\mathbf K_{\mathbf f}(\Phi)(s):=\mathbf S(s)\big[\mathbf f-\mathbf C(\Phi,\mathbf f)\big]+\int_{0}^s\mathbf S(s-s')\mathbf N\big(\Phi(s')\big)ds'.
					$$
				We aim to show that $\mathbf K_{\mathbf f}:\mathcal Y_\delta\to\mathcal Y_\delta$ and is a contraction. 
				
				First, observe that by Theorem \ref{linear stability} and Proposition \ref{projection} we obtain
					$$
						\mathbf P_j\mathbf K_{\mathbf f}(\Phi)(s)=-\int_s^\infty e^{j(s-s')}\mathbf P_j\mathbf N\big(\Phi(s')\big)ds'.
					$$
				From Lemma \ref{nonlinear bound} and the fact that $\mathbf N(\mathbf0)=\mathbf0$, we have the estimate
					\begin{align*}
						\|\mathbf P_j\mathbf K_{\mathbf f}(\Phi)(s)\|_{\mathcal H_R}&\lesssim e^{js}\int_s^\infty e^{-js'}\|\Phi(s')\|_{\mathcal H_R}^2ds'
						\\
						&\lesssim e^{js}\|\Phi\|_{\mathcal X_R}^2\int_s^\infty e^{-js'-2\omega_0s'}ds' \lesssim \delta^2e^{-2\omega_0s}.
					\end{align*}
				By Proposition \ref{projection}, we have $(\mathbf I-\mathbf P)\mathbf C(\Phi,\mathbf f)=\mathbf 0$ which implies
					$$
						(\mathbf I-\mathbf P)\mathbf K_{\mathbf f}(\Phi)(s)=\mathbf S(s)(\mathbf I-\mathbf P)\mathbf f+\int_{0}^s\mathbf S(s-s')(\mathbf I-\mathbf P)\mathbf N\big(\Phi(s')\big)ds'.
					$$
				By Theorem \ref{linear stability}, we obtain
					\begin{align*}
						\|(\mathbf I-\mathbf P)\mathbf K_{\mathbf f}(\Phi)(s)\|_{\mathcal H_R}&\lesssim\frac{\delta}{c}\|\mathbf f\|_{\mathcal H_R}+\int_{0}^se^{-\omega_0(s-s')}\|\mathbf N\big(\Phi(s')\big)\|_{\mathcal H_R}ds'
						\\
						&\lesssim\frac{\delta}{c}e^{-\omega_0s}+e^{-\omega_0s}\int_{0}^se^{\omega_0s'}\|\Phi(s')\|_{\mathcal H_R}^2ds'
						\\
						&\lesssim\frac{\delta}{c}e^{-\omega_0s}+\|\Phi\|_{\mathcal X_R}^2e^{-\omega_0s}\int_{0}^se^{-\omega_0s'}ds'
						\\
						&\lesssim\frac{\delta}{c}e^{-\omega_0s}+\delta^2e^{-\omega_0s}
					\end{align*}
				for all $s\geq0$. Thus, for all sufficiently large $c$ and sufficiently small $\delta$, we can ensure
					$$
						\|\mathbf K_{\mathbf f}(\Phi)(s)\|_{\mathcal H_R}\leq\delta e^{-\omega_0s}.
					$$
				Consequently, we see that $\mathbf K_{\mathbf f}:\mathcal Y_\delta\to\mathcal Y_\delta$. 
				
				We claim that $\mathbf K_{\mathbf f}$ is a contraction map. Given $\Phi,\Psi\in\mathcal Y_\delta$,
					$$
						\mathbf P_j\mathbf K_{\mathbf f}(\Phi)(s)-\mathbf P_j\mathbf K_{\mathbf f}(\Psi)(s)=-\int_s^\infty e^{j(s-s')}\mathbf P_j\Big(\mathbf N\big(\Phi(s')\big)-\mathbf N\big(\Psi(s')\big)\Big)ds'.
					$$
				By Lemma \ref{nonlinear bound}
					\begin{align*}
						\|\mathbf P_j\mathbf K_{\mathbf f}(\Phi)(s)&-\mathbf P_j\mathbf K_{\mathbf f}(\Psi)(s)\|_{\mathcal H_R}
						\\
						&\lesssim e^{js}\int_s^\infty e^{-js'}\big(\|\Phi(s')\|_{\mathcal H_R}+\|\Psi(s')\|_{\mathcal H_R}\big)\|\Phi(s')-\Psi(s')\|_{\mathcal H_R}ds'
						\\
						&\lesssim\delta\|\Phi-\Psi\|_{\mathcal X_R}e^{js}\int_s^\infty e^{-js'-2\omega_0s'}ds'
						\lesssim\delta e^{-2\omega_0s}\|\Phi-\Psi\|_{\mathcal X_R}.
					\end{align*}
				Furthermore,
					$$
						(\mathbf I-\mathbf P)\mathbf K_{\mathbf f}(\Phi)(s)-(\mathbf I-\mathbf P)\mathbf K_{\mathbf f}(\Psi)(s)=\int_0^s\mathbf S(s-s')(\mathbf I-\mathbf P)\Big(\mathbf N\big(\Phi(s')\big)-\mathbf N\big(\Psi(s')\big)\Big)ds'.
					$$
				By Theorem \ref{linear stability} and Lemma \ref{nonlinear bound}, we obtain
					\begin{align*}
						\|(\mathbf I-\mathbf P)\mathbf K_{\mathbf f}(\Phi)(s)&-(\mathbf I-\mathbf P)\mathbf K_{\mathbf f}(\Psi)(s)\|_{\mathcal H_R}
						\\
						&\lesssim\int_0^s e^{-\omega_0(s-s')}\big(\|\Phi(s')\|_{\mathcal H_R}+\|\Psi(s')\|_{\mathcal H_R}\big)\|\Phi(s')-\Psi(s')\|_{\mathcal H_R}ds'
						\\
						&\lesssim\delta\|\Phi-\Psi\|_{\mathcal X_R}e^{-\omega_0s}\int_0^s e^{-\omega_0s'}ds'
						\lesssim\delta e^{-\omega_0s}\|\Phi-\Psi\|_{\mathcal X_R}.
					\end{align*}
				Thus,
					$$
						\|\mathbf K_{\mathbf f}(\Phi)-\mathbf K_{\mathbf f}(\Psi)\|_{\mathcal X_R}\lesssim\delta\|\Phi-\Psi\|_{\mathcal X_R}
					$$
				and by considering smaller $\delta$ if necessary, we see that $\mathbf K_{\mathbf f}$ is a contraction on $\mathcal Y_\delta$. The Banach fixed point theorem implies the existence of a unique fixed point $\Phi_\mathbf f\in\mathcal Y_\delta$ of $\mathbf K_\mathbf f$. 
			
				We now claim that the solution map $\mathbf f\mapsto\Phi_\mathbf f$ is Lipschitz. Observe that
					\begin{align*}
						\|\Phi_\mathbf f-\Phi_\mathbf g\|_{\mathcal X_R}&=\|\mathbf K_\mathbf f(\Phi_\mathbf f)-\mathbf K_\mathbf g(\Phi_\mathbf g)\|_{\mathcal X_R}
						\\
						&\leq\|\mathbf K_\mathbf f(\Phi_\mathbf f)-\mathbf K_\mathbf f(\Phi_\mathbf g)\|_{\mathcal X_R}+\|\mathbf K_\mathbf f(\Phi_\mathbf g)-\mathbf K_\mathbf g(\Phi_\mathbf g)\|_{\mathcal X_R}
						\\
						&\lesssim\delta\|\Phi_\mathbf f-\Phi_\mathbf g\|_{\mathcal X_R}+\|\mathbf K_\mathbf f(\Phi_\mathbf g)-\mathbf K_\mathbf g(\Phi_\mathbf g)\|_{\mathcal X_R}.
					\end{align*}
				A direct calculation shows
					$$
						\mathbf K_\mathbf f(\Phi_\mathbf g)(s)-\mathbf K_\mathbf g(\Phi_\mathbf g)(s)=\mathbf S(s)(\mathbf I-\mathbf P)(\mathbf f-\mathbf g).
					$$
				Theorem \ref{linear stability} yields
					$$
						\|\mathbf K_\mathbf f(\Phi_\mathbf g)(s)-\mathbf K_\mathbf g(\Phi_\mathbf g)(s)\|_{\mathcal H_R}\lesssim e^{-\omega_0s}\|\mathbf f-\mathbf g\|_{\mathcal H_R}.
					$$
				Thus, we have
					$$
						\|\Phi_\mathbf f-\Phi_\mathbf g\|_{\mathcal X_R}\lesssim\delta\|\Phi_\mathbf f-\Phi_\mathbf g\|_{\mathcal X_R}+\|\mathbf f-\mathbf g\|_{\mathcal H_R}.
					$$
				Again, considering smaller $\delta$ if necessary yields the result.
			\end{proof}
		
	\section{Preparation of Hyperboloidal Initial Data}\label{Preparation of Hyperboloidal Initial Data}
		In this section, we construct the functions $(Y_1,Y_2)$ mentioned in the statement of Theorem \ref{stability on whole space}. As this is nontrivial, we begin by first motivating and outlining the main strategy. 
		 
		\subsection{Motivation and Overview for the Construction of $(Y_1,Y_2)$} \label{Motivation and Overview for the Construction of (Y_1,Y_2)}
			Our goal is to evolve data of the form $u_1^*[0]+(f,g)$ for sufficiently small, smooth, compactly supported, radial functions $f,g$ into the region $\Omega_{T,R}$ according to Equation \eqref{qwe}. As this region is not foliated by surfaces of constant physical time, this evolution must occur in two steps: first along hypersurfaces of constant physical time and then along hypersurfaces of constant hyperboloidal time. Carrying out this two-step evolution is rather nontrivial due to the fact that $\mathbf L$ has a genuine eigenvalue that does not come from a symmetry of the equation. We emphasize that this is a new difficulty compared to the previous works \cite{BDS19} and \cite{DO21}. In order to explain the nontrivial nature of this problem properly, we will first informally describe a natural approach one might naively attempt and then describe how we adapt this approach.
		
		 For the moment, let $T$ be some number close to $1$. The region $\Omega_{T,R}$ can be covered by a mix of hyperboloids and slices of constant physical time. With this in mind, a natural first step would be to solve the  quadratic wave equation, Equation \eqref{qwe}, with initial data $u[0]=u_1^*[0]+(f,g)$ for some short time using the standard Cauchy theory in physical coordinates. To continue the evolution to the rest of $\Omega_{T,R}$, one might expect to restrict this solution on some hyperboloid and evolve further using the nonlinear theory developed in Section \ref{Nonlinear Stability Analysis}. In fact, this is precisely what is done in \cite{BDS19} and \cite{DO21}. Of course, Equation \eqref{modified Duhamel} is not the quadratic wave equation due to the correction term. So, one might hope that there exists at least one choice of $T$ for which the correction term, $\mathbf C$, vanishes. If this were possible, then solutions of Equation \eqref{modified Duhamel} would in fact yield solutions of the quadratic wave equation in $\Omega_{T,R}$. An obstruction to this is that the correction term is a sum of two terms, one for each unstable eigenvalue. That is, $\mathbf C=\mathbf C_1+\mathbf C_4$ with $\mathbf C_1$ correcting for the eigenvalue $\lambda=1$ and $\mathbf C_4$ correcting for the eigenvalue $\lambda=4$. Without an additional parameter to vary, one cannot hope to guarantee the vanishing of both correction terms.
		
		Now, recall the solution $F_4^*$ of the quadratic wave equation linearized around $u_T^*$ given in Equation \eqref{Eq:Unst_Mode}. Translating to hyperboloidal similarity coordinates, we have the transformations $(F_4^*\circ\eta_T)(s,y)=e^{6s} f_{4,1}^*(y)$ and $\partial_s(F_4^*\circ\eta_T)(s,y)=e^{6s} f_{4,2}^*(y)$. The role of the correction term $\mathbf C_4$ is to remove the contribution of $\mathbf f_4^*$ from hyperboloidal initial data. Thus, it seems plausible to expect that for data of the form $u_1^*[0]+(f,g)+\alpha F_4^*[0]$, the correction term $\mathbf C_4$ might vanish for at least one choice of $\alpha$. As stated, it is not possible to guarantee this and continue the evolution of such data along hyperboloids using techniques as in \cite{BDS19} or \cite{DO21}. This is due to the fact that the physical evolution of such data cannot necessarily be contained in a single ball on a hyperboloid and, as a consequence, the nonlinear theory from Section \ref{Nonlinear Stability Analysis} cannot be applied in a meaningful way. 
		
		As a remedy, one might expect that data of the form $u_1^*[0]+(f,g)+\alpha\chi F_4^*[0]$, for some smooth cutoff function $\chi$ and some choice of $\alpha$ and $T$, might work. Though this may be possible, it appears extremely difficult to continue the evolution of such data along hyperboloids in a controllable way. The reason for this difficulty is that one proves that there are parameters $\alpha$ and $T$ for which $\mathbf C$ vanishes via a fixed point argument. In order to run this fixed point argument one needs two crucial pieces of information. On the one hand, we need to guarantee smallness and uniform control of the derivatives of the solution produced.    However, since $\chi F_4^*$ is not a true solution of the quadratic wave equation linearized around $u_1^*$, this cannot be guaranteed. Secondly, running the fixed point argument will eventually necessitate that the Riesz projection $\mathbf P_4$ applied to a portion of the data on an initial hyperboloid does not vanish. Proving this is difficult unless the data is of a rather explicit form. 
		
		With these requirements in mind, we can adapt the naive approach. Let's say that any sufficiently smooth perturbation of $u_1^*$ of unit size can be evolved using the standard Cauchy theory in physical coordinates for at least a length of time $t_0>0$. For technical reasons, we impose the condition that our perturbation have support contained in the interval $[0,r_0)$ with $r_0:=\frac{t_0}{4}$. We have two conditions which determine the proper replacement, denoted by $(Y_1,Y_2)$, for the term $\chi F_4^*[0]$ in our perturbation:

			\begin{enumerate}

				\item First, we require that $Y_1$ be the restriction of a solution $u_\text{lin}$ of the linearized equation \eqref{Eq:QWE_Linearized} at $t=0$ with $Y_2$ being its time derivative at $t=0$.
				
\item Furthermore, the restriction of $u_\text{lin}$ to a particular hyperboloidal time slice should be proportional to $\chi_{t_0}\mathbf f_4^*$ where $\chi_{t_0}$ is a specific smooth cutoff function with support determined by $t_0$. The support of $\chi_{t_0}$ is chosen precisely so that, when viewed in spacetime, its domain of influence at $t=0$ is contained within the interval $[0,r_0)$. Furthermore, the support is also chosen so that $\mathbf P_4$ applied to the previously mentioned portion of the solution does not vanish.
			\end{enumerate} 
			
		The second property guarantees the desired condition involving the Riesz projection while the first ensures the required smallness and uniform control on derivatives of the local solution obtained by solving the quadratic wave equation. The two properties are met by solving the linearized equation in two different ways; first in hyperboloidal similarity coordinates and second in physical coordinates. The condition on the support of the cutoff ensures that both ways of solving the linearized equation produce the same result in the overlapping region. Before carrying this out, we outline the construction and proof. 
		
		 Our goal will be to construct a smooth solution of Equation \eqref{Eq:QWE_Linearized}, with $T=1$, for $(t,r)\in\Lambda_{t_0}$ defined by
		 	
		 	\begin{align}\label{Def:Lambda_Region}
				\Lambda_{t_0}:=[-t_0,t_0]\times[0,\infty)\cup\{(t,r)\in\mathbb R\times[0,\infty):-r+r_0\leq t\leq r-r_0\}
			\end{align}
		satisfying the above two properties. To achieve this, one can first solve the abstract initial value problem
			$$
			\begin{cases}
				\partial_s\Phi(s)=\mathbf L\Phi(s)
				\\
				\Phi(s_0)=\chi_{t_0}\mathbf f_4^*
			\end{cases}
			$$
		for $s\geq s_0$ with
			$$
				s_0:=\log\Big(-\frac{2h(0)}{2+r_0}\Big)
			$$
		on the space $\mathcal H^k_{1/2}$ for any $k\in\mathbb N$. The number $s_0$ is chosen so that the hyperboloids $\eta_{1+\beta}(s_0,y)$, $y\geq0$, lie entirely within $\Lambda_{t_0}$ for $\beta$ sufficiently small. Furthermore, the cutoff $\chi_{t_0}$ is chosen to be non-increasing and to have support contained in the interval $[0,y_0)$ with $y_0$ to be defined later. This number is chosen precisely so that the domain of influence of $\supp(\chi_{t_0})$, when viewed in spacetime, at $t=0$ is contained in the interval $[0,r_0)$. As a consequence, one can prove that the solution is smooth and translates it to a smooth solution of the quadratic wave equation linearized around $u_1^*$ in the spacetime region $\eta_1([s_0,\infty)\times[0,\frac{1}{2}))$, see Figure \ref{eta1_region}. Let's call this solution $u_\text{lin}$.
			\begin{figure}
				\includegraphics[scale=0.75]{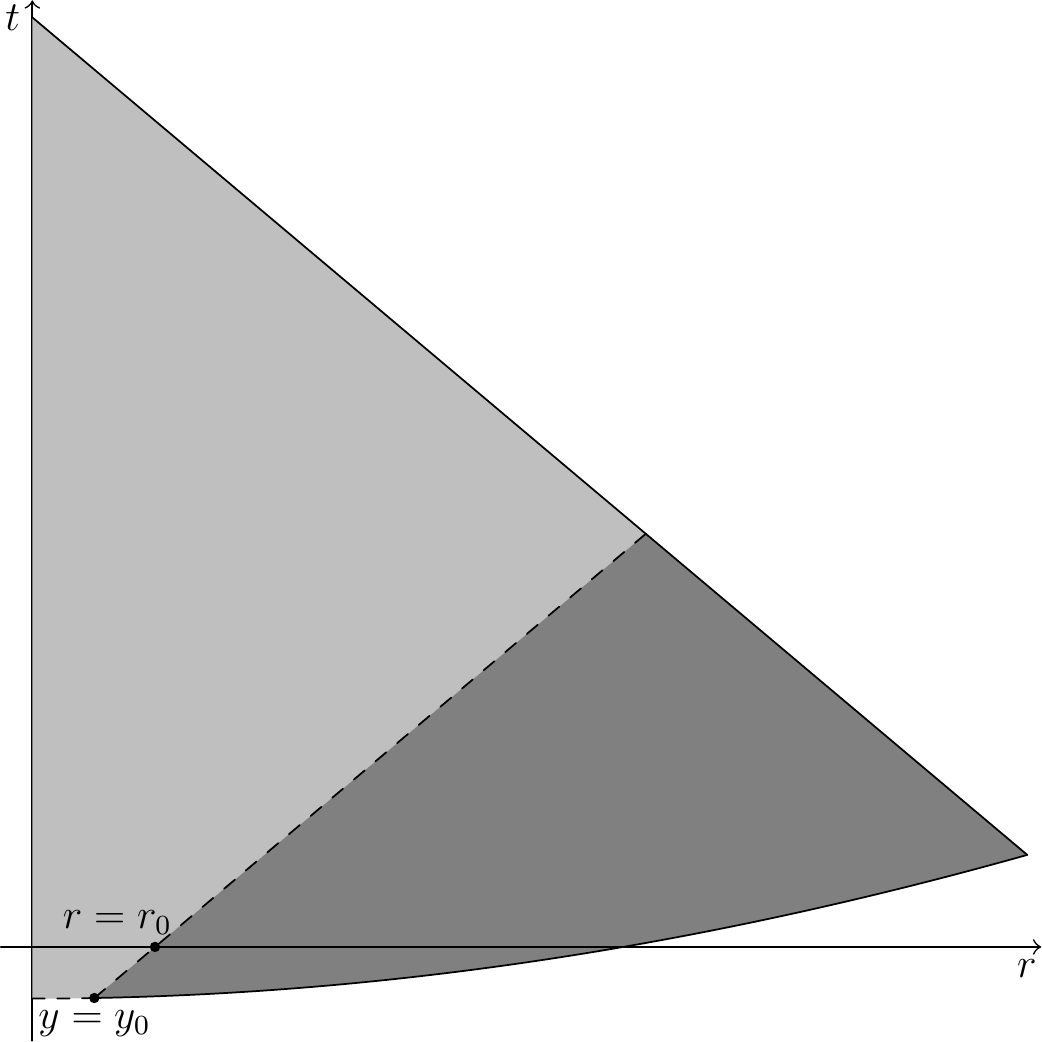}
				\centering
				\caption{A spacetime diagram depicting the region $\eta_1([s_0,\infty)\times[0,\frac{1}{2}))$. The cutoff $\chi_{t_0}$ is supported on the dashed portion of the hyperboloid $\eta_1(\{s_0\}\times[0,\frac{1}{2}))$ in the bottom left corner of the picture. The solution produced is guaranteed to vanish in the dark gray region and is potentially nonzero in the light gray region.}
				\label{eta1_region}
			\end{figure}
		
		Of course, the spacetime region $\eta_1([s_0,\infty)\times[0,\frac{1}{2}))$ does not contain all of $\Lambda_{t_0}$. In order to extend the domain of $u_\text{lin}$, we first remember that along the initial hyperboloid the cutoff $\chi_{t_0}$ is designed to vanish for $y\geq y_0$. Thus, the uniqueness of solutions of linear wave equations, see Lemma 12.8 of \cite{R09} for instance, guarantees that $u_\text{lin}$ vanishes in the dark gray portion of $\eta_1([s_0,\infty)\times[0,\frac{1}{2}))$ depicted in Figure \ref{eta1_region}. With this in mind, we smoothly extend $u_\text{lin}(0,\cdot)$ by zero, i.e., we define functions 				
			\begin{equation}
				Y_1(r):=
				\begin{cases}
					u_\text{lin}(0,r)&r\leq r_0
					\\
					0&r\geq r_0
				\end{cases} \label{U1}
			\end{equation}
		and
			\begin{equation}
				Y_2(r):=
				\begin{cases}
					\partial_0u_\text{lin}(0,r)&r\leq r_0
					\\
					0&r\geq r_0
				\end{cases}. \label{U2}
			\end{equation}
		We can then use these functions as initial data for the Cauchy problem
			$$
			\begin{cases}
				\Big(\partial_t^2-\partial_r^2-\frac{d-1}{r}\partial_r\Big)u(t,r)=2u_1^*(t,r)u(t,r)&(t,r)\in\Lambda_{t_0}
				\\
				u[0](r)=\big(Y_1(r),Y_2(r)\big)&r\in[0,\infty)
			\end{cases}
			$$
		which is guaranteed to have a unique smooth solution since $u_1^*$ is smooth in $\Lambda_{t_0}$ and $Y_1,Y_2$ are smooth, see for instance Theorem 3.2 of \cite{S08}. Since this solution and the original agree on the initial hyperboloid, they must agree wherever $\Lambda_{t_0}$ and $\eta_1([s_0,\infty)\times[0,\frac{1}{2}))$ intersect. Thus, we will have achieved extending $u_\text{lin}$ to a larger region of spacetime which, in fact, strictly contains $\Lambda_{t_0}$, see Figure \ref{extension_region}.
			\begin{figure}
				\includegraphics[scale=0.75]{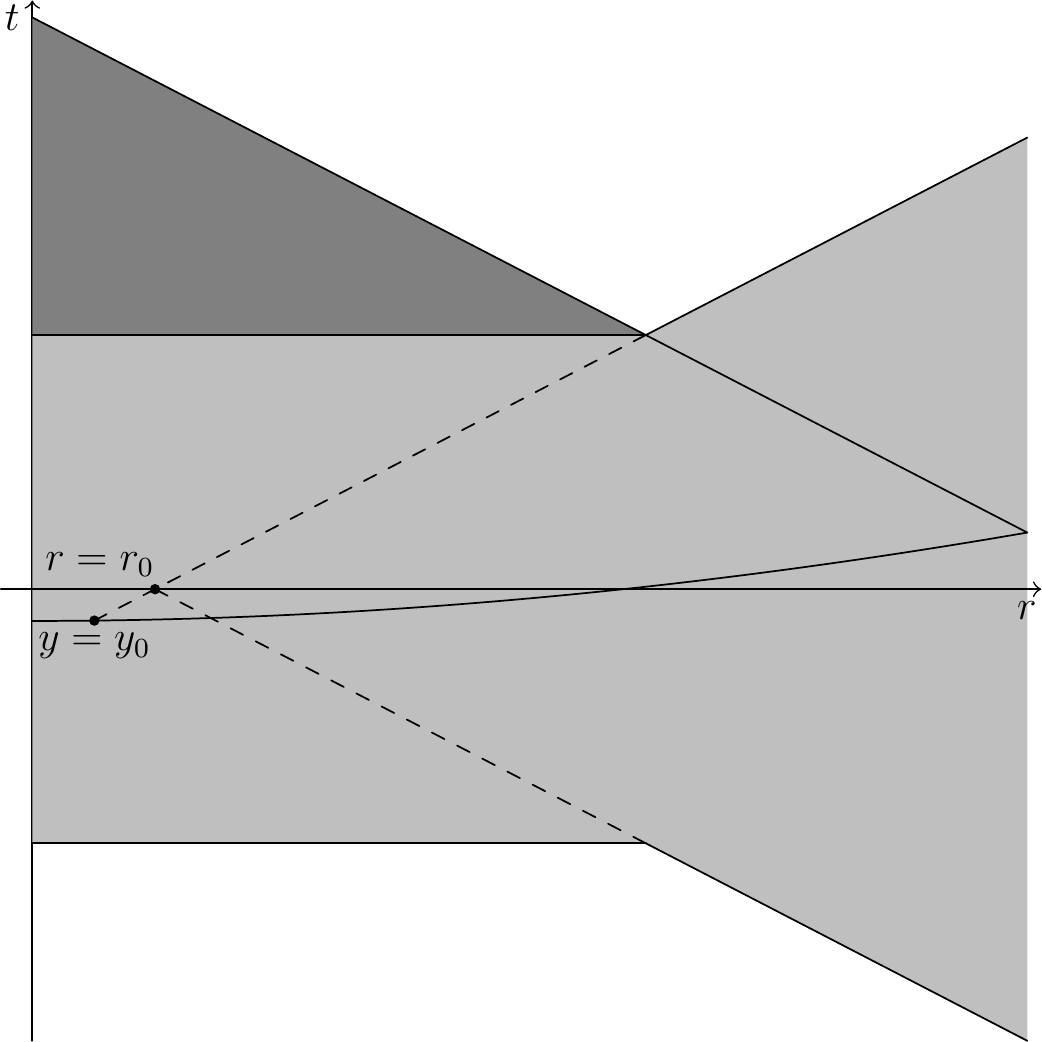}
				\centering
				\caption{A spacetime diagram depicting the domain of the smoothly extended $u_\text{lin}$. The darker region depicts a portion of where $u_\text{lin}$ was originally defined. The lighter region depicts where $u_\text{lin}$ has been extended by solving Equation \eqref{linearized CP}.}
				\label{extension_region} 
			\end{figure}
		Consequently, we have a solution of the linearized equation in $C^\infty(\Lambda_{t_0})$ which satisfies our two conditions. 

Finally, we evolve small, smooth, radial perturbations of $u_1^*$ of the form $(f,g)+\alpha(Y_1,Y_2)$ according to Equation \eqref{qwe}. Adjusting the size of $(f,g)$ and of $|\alpha|$ allows us to ensure $(f,g)+\alpha(Y_1,Y_2)$ is of unit size, i.e., data of the form $u_1^*[0]+(f,g)+\alpha(Y_1,Y_2)$ can be evolved in $\Lambda_{t_0}$ via the quadratic wave equation. Writing the solution as $u=u_1^*+\alpha u_\text{lin}+\varphi_{f,g,\alpha}$, we are able to prove the required bounds on the remainder term $\varphi_{f,g,\alpha}$. Then, by allowing $\alpha$ and $T$ to vary, we are indeed able to run the necessary fixed point argument proving the vanishing of the correction term. Thus, we are able to successfully continue the evolution into all of $\Omega_{T,R}$.
		
		\subsection{Local Existence for Perturbations of $u_1^*$ at $t=0$}
			In this section, we prove a local well-posedness result  in a truncated light cone for sufficiently smooth perturbations of $u_1^*$ of unit size. This will allow us to define the spacetime region $\Lambda_{t_0}$ which will set the stage for proving the main result. We will study strong $H^k$-solutions in light cones of the quadratic wave equation and refer the reader to \cite{BDS19} as well as to  Appendix \ref{Cauchy_Theory}  for the basic definitions. To begin, we define the following function spaces.
			
	\begin{definition}
					Let $k\in\mathbb N_0$, $T>0$, and $T'\in(0,T)$. We define a Banach space $X^k_T(T')$ which consists of functions
						$$
							u:\bigcup_{t\in[0,T']}\{t\}\times\mathbb B^7_{T-t}\to\mathbb R
						$$
					such that $u(t,\cdot)\in H^k(\mathbb B^7_{T-t})$ for each $t\in[0,T']$ and the map $t\to\|u(t,\cdot)\|_{H^k(\mathbb B^7_{T-t})}$ is continuous on $[0,T']$. Furthermore, we set
						$$
							\|u\|_{X^k_T(T')}:=\max_{t\in[0,T']}\|u(t,\cdot)\|_{H^k(\mathbb B^7_{T-t})}.
						$$
				\end{definition}
The proof of the next result is a standard fixed-point argument. However, the specific choice of the life span $t_0$ will be important later on, hence we make it explicit. Furthermore, due to the desire to use standard arguments, we are forced to switch back and forth between radial and non-radial representations of functions on spacetime. To avoid confusion, we will point out explicitly when any identifications are being made.
					
\begin{lemma} \label{local unit solution}
There exists $t_0\in(0,\frac{4}{9})$ such that for all $(f,g)\in H^{10}(\mathbb B_1^7)\times H^9(\mathbb B_1^7)$ satisfying
$$
\|f\|_{H^{10}(\mathbb B_1^7)}+\|g\|_{H^9(\mathbb B_1^7)}\leq1,
						$$
					the initial value problem
						$$
							\begin{cases}
								\big(\partial_t^2-\Delta_x\big)u(t,x)=u(t,x)^2&(t,x)\in\bigcup_{s\in[0,t_0]}\{s\}\times\mathbb B^7_{1-|s|},\label{Cauchy problem}
								\\
								u[0](x)=u_1^*[0](|x|)+\big(f(x),g(x)\big)&x\in\mathbb B_1^7,
							\end{cases}
						$$
					has a unique strong $H^{10}$-solution $u \in X^{10}_{1}(t_0)$ in the truncated light cone $\bigcup_{t\in[0,t_0]}\{t\}\times\mathbb B_{1-t}^7$. Furthermore, $\partial_0 u \in X^{9}_{1}(t_0)$ and the data to solution map is Lipschitz-continuous from $H^{10}(\mathbb B_1^7) \times H^9(\mathbb B_1^7)$ to $X^{10}_{1}(t_0) \times  X^{9}_{1}(t_0)$.
				\end{lemma}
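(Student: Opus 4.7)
The plan is to treat this as a small perturbation of the explicit (and smooth) background $u_1^*$ on the truncated backward light cone with vertex $(1,0)$. Writing $u = u_1^* + w$, a direct computation shows that $w$ satisfies
\begin{equation*}
\big(\partial_t^2-\Delta_x\big)w(t,x) = 2u_1^*(t,x)\,w(t,x) + w(t,x)^2, \qquad w[0](x) = (f(x),g(x)),
\end{equation*}
on $\bigcup_{t\in[0,t_0]}\{t\}\times\mathbb B^7_{1-t}$. For any $t_0<1$, the potential $V(t,x):=2u_1^*(t,|x|)$ is smooth and uniformly bounded with all derivatives on this truncated cone, since the only singularity of $u_1^*$ lies at $(1,0)$, which is strictly excluded by the condition $|x|<1-t$. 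Thus the equation for $w$ is a linear wave equation with smooth potential plus a quadratic source, placed in a setting where the standard Cauchy theory for strong $H^k$-solutions in truncated light cones (as recalled in Appendix \ref{Cauchy_Theory} and developed in \cite{BDS19}) applies directly.

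Using that theory, I would let $\mathbf S_V$ denote the propagator for the linear inhomogeneous Cauchy problem $(\partial_t^2-\Delta_x)w = V w + F$ in the cone, so that Duhamel's formula gives the fixed-point equation
\begin{equation*}
w(t) = \mathbf S_V(t)(f,g) + \int_0^t \mathbf S_V(t-s)\bigl(0,\,w(s)^2\bigr)\,ds =: \mathbf K(w)(t).
\end{equation*}
The aim is to apply Banach's fixed-point theorem on a small closed ball in $X^{10}_1(t_0)$. The key analytic estimate is the algebra property of $H^{10}(\mathbb B^7_{1-t})$, which holds uniformly in $t\in[0,t_0]$ because $10>7/2$ and the domains shrink (so constants can be taken as on $\mathbb B_1^7$); this yields $\|w^2\|_{H^{10}(\mathbb B_{1-t})} \lesssim \|w(t)\|_{H^{10}(\mathbb B_{1-t})}^2$, and analogously the difference bound $\|w^2-\tilde w^2\|_{H^{10}} \lesssim (\|w\|+\|\tilde w\|)\|w-\tilde w\|_{H^{10}}$. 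Combined with the linear bound $\|\mathbf S_V(t)\|\lesssim 1$ on $[0,t_0]$ from the smooth potential theory, integration in $s$ produces a factor of $t_0$ in front of the nonlinear terms. Choosing $t_0$ small enough (and in particular $t_0 < 4/9$, as required for the definition $r_0 := t_0/4$ used later in Section \ref{Preparation of Hyperboloidal Initial Data}), $\mathbf K$ is a contraction on the ball of radius, say, $2$ in $X^{10}_1(t_0)$. The fixed point $w$ gives the desired $u = u_1^* + w \in X^{10}_1(t_0)$. Regularity of $\partial_0 u$ in $X^9_1(t_0)$ follows by reading off $\partial_0 w$ from the equation (which loses one derivative), and Lipschitz dependence on $(f,g)$ follows from a standard variation of the contraction argument applied to the difference $w_{(f,g)} - w_{(\tilde f,\tilde g)}$.

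The main obstacle is bookkeeping rather than analysis: one must verify that the time-dependent domain $\mathbb B^7_{1-t}$ is correctly respected by $\mathbf S_V$ (finite propagation speed) and that the $X^{10}_1(t_0)$-norms behave continuously in $t$, so that the fixed-point iterates remain in the right space. Both are standard for smooth potentials on truncated light cones, but need to be invoked from the apparatus of Appendix \ref{Cauchy_Theory}. The secondary subtlety is the explicit quantitative choice of $t_0$: beyond the smallness needed for contraction, the upper bound $t_0 < 4/9$ is imposed so that the later construction in Section \ref{Preparation of Hyperboloidal Initial Data} (in which $r_0 := t_0/4$ controls the support of the perturbation and of the adjustment term) has enough room for the domain-of-influence arguments that match the physical and hyperboloidal pictures.
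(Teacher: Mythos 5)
Your decomposition $u = u_1^* + w$ is identical to the paper's, and the overall fixed-point strategy is the same; the genuine difference is where you place the potential term $2u_1^*w$. You absorb it into a linear propagator $\mathbf S_V$ for $(\partial_t^2-\Delta_x)w = Vw + F$, and then iterate only the quadratic term $w^2$ in Duhamel. The paper instead keeps the free wave propagators $\cos(t|\nabla|)$, $\sin(t|\nabla|)/|\nabla|$ and puts the entire expression $\mathcal N(t,x,w) = w^2 + 2u_1^*w$ into the nonlinearity. Both routes are valid, but the paper's has a concrete advantage in this context: the only propagator estimates supplied by Appendix \ref{Cauchy_Theory} (via Proposition \ref{finite speed of propagation}) are for the \emph{free} propagators on truncated light cones. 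Your approach would require an additional preliminary lemma establishing a uniform bound for $\mathbf S_V$ on shrinking cones (standard, e.g.\ via a Gronwall bootstrap off the free estimates, but not available off the shelf here), and you should not gloss over that step. A smaller remark: the paper invokes the Banach algebra property at the level of $H^9$, since the Duhamel integrand enters through $\frac{\sin((t-s)|\nabla|)}{|\nabla|}$ which raises regularity by one; your use of the $H^{10}$ algebra is harmless but loses that derivative gain for no benefit. Finally, the restriction $t_0 < 4/9$ in the paper is chosen so that $t_0$ stays well inside the interval where $\gamma_0 := \max_{s\in[0,1/2]}\gamma_7(1-s)$ and $\|u_1^*\|_{X_1^{10}(4/9)}$ are finite quantities feeding the contraction constants; your explanation in terms of the domain-of-influence bookkeeping from Section \ref{Preparation of Hyperboloidal Initial Data} is not how the paper uses it here, though both are of course compatible.
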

				\begin{proof}
					We solve the Cauchy problem
						$$
							\begin{cases}
								\big(\partial_t^2-\Delta_x\big)\varphi(t,x)=\varphi(t,x)^2+2u_1^*(t,|x|)\varphi(t,x)&(t,x)\in\bigcup_{s\in[0,t_0]}\{s\}\times\mathbb B^7_{1-|s|}
								\\
								\varphi[0](x)=\big(f(x),g(x)\big)&x\in\mathbb B_1^7
							\end{cases}
						$$
in a truncated light cone $\bigcup_{t\in[0,t_0]}\{t\}\times\mathbb B^d_{1-t}$. This yields a solution of the original Cauchy problem by setting $u(t,x)=u_1^*(t,|x|)+\varphi(t,x)$. For $t_0>0$, set 
						$$
							Y(t_0):=\{\varphi\in X_1^{10}(t_0):\|\varphi\|_{X_1^{10}(t_0)}\leq2\gamma_0\}
						$$
					where $\gamma_0:=\max_{s\in[0,\frac{1}{2}]}\gamma_7(1-s)$ and $\gamma_7$ is the continuous function from Proposition \ref{finite speed of propagation}. Define a map $\mathcal K_{f,g}$ on $Y(t_0)$ by
						$$
							\mathcal K_{f,g}(\varphi)(t)=\cos(t|\nabla|)f+\frac{\sin(t|\nabla|)}{|\nabla|}g+\int_0^t\frac{\sin\big((t-s)|\nabla|\big)}{|\nabla|}\mathcal N\big(s,\cdot,\varphi(s,\cdot)\big)ds
						$$
					where
						$$
							\mathcal N\big(t,x,\varphi(t,x)\big)=\varphi^2(t,x)+2u_1^*(t,|x|)\varphi(t,x).
						$$
					By the Banach algebra property, we infer the existence of a constant $0<C_0<\infty$ such that
						\begin{equation}
							\| u v \|_{H^{9}(\mathbb B_r^7)}\leq C_0 \|u \|_{H^{9}(\mathbb B_r^7)}\|v \|_{H^{9}(\mathbb B_r^7)} \label{banach algebra}
						\end{equation}
					for any $u, v \in H^{9}(\mathbb B_r^7)$ and $r\in[\frac{1}{2},1]$. As a consequence, using the bounds stated in Proposition \ref{finite speed of propagation}, we have
						\begin{align*}
							\|\mathcal K_{f,g}(\varphi)(t)\|_{H^{10}(\mathbb B_{1-t}^7)}\leq&\gamma_0\|f\|_{H^{10}(\mathbb B_{1}^7)}+\gamma_0\|g\|_{H^{9}(\mathbb B_{1}^7)}+\gamma_0\int_0^t\|\mathcal N(s,\cdot,\varphi(s,\cdot))\|_{H^{9}(\mathbb B_{1-s}^7)}ds
							\\
							\leq&\gamma_0+\gamma_0\int_0^t\big(\|\varphi(s,\cdot)^2\|_{H^{9}(\mathbb B_{1-s}^7)}+2\|u_1^*(s,|\cdot|)\varphi(s,\cdot)\|_{H^{9}(\mathbb B_{1-s}^7)}\big)ds
							\\
							\leq&\gamma_0+\gamma_0 C_0t_0\big(\|\varphi\|^2_{X_1^{10}(t_0)}+2\|u_1^*\|_{X_1^{10}(t_0)}\|\varphi\|_{X_1^{10}(t_0)}\big)
							\\
							\leq&\gamma_0+\gamma_0 C_0t_0\big((2\gamma_0)^2+4\|u_1^*\|_{X_1^{10}(\frac{4}{9})}\gamma_0\big)
						\end{align*}
Thus, we must have that $\mathcal K_{f,g}:Y(t_0)\to Y(t_0)$ provided $t_0$ is small enough so that the inequality
						$$
							t_0 C_0\big((2\gamma_0)^2+4\|u_1^*\|_{X_1^{10}(\frac{4}{9})}\gamma_0\big)\leq1
						$$
					holds. Similarly, given $\varphi,\psi\in Y(t_0)$, we have
						\begin{align*}
							\|\mathcal K_{f,g}(\varphi)(t)&-\mathcal K_{f,g}(\psi)(t)\|_{H^{10}(\mathbb B_{1-t}^7)}
							\\\
							\leq&\gamma_0\int_0^t\|\mathcal N(s,\cdot,\varphi(s,\cdot))-\mathcal N(s,\cdot,\psi(s,\cdot))\|_{H^{9}(\mathbb B_{1-s}^7)}ds
							\\
							\leq&\gamma_0\int_0^t\big(\|\varphi(s,\cdot)^2-\psi(s,\cdot)^2\|_{H^{9}(\mathbb B_{1-s}^7)}
+2\|u_1^*(s,|\cdot|)(\varphi(s,\cdot)-\psi(s,\cdot))\|_{H^{9}(\mathbb B_{1-s}^7)}\big)ds
							\\
							\leq&\gamma_0 C_0t_0\big(4\gamma_0+2\|u_1^*\|_{X_1^{10}(\frac{4}{9})}\big)\|\varphi-\psi\|_{X_1^{10}(t_0)}.
						\end{align*}
					Thus, $\mathcal K_{f,g}$ is Lipschitz on $Y(t_0)$ with Lipschitz constant at most $\frac{1}{2}$ provided $t_0$ is small enough so that the inequality
						$$
							\gamma_0 t_0 C_0\big(4\gamma_0+2\|u_1^*\|_{X_1^{10}(\frac{4}{9})}\big)\leq\frac{1}{2}
						$$
holds. Thus, $\mathcal K_{f,g}$ is a contraction on the closed subspace $Y(t_0)$ of the Banach space $X_1^{10}(t_0)$. As a consequence, the Banach fixed point theorem implies the existence of a unique fixed point $\varphi_{f,g}\in Y(t_0)$ of $\mathcal K_{f,g}$. This and standard arguments on unconditional uniqueness prove the existence of $u$ with the claimed properties. The statement about the time derivative follows from the Duhamel formula and the bounds stated in Proposition \ref{finite speed of propagation}. The Lipschitz dependence on the data is again standard. 
\end{proof}
				
		\subsection{Construction of an Adjustment Term}\label{Construction of an Adjustment Term}
In this section, we construct the functions $Y_1$ and $Y_2$ as described in Section \ref{Motivation and Overview for the Construction of (Y_1,Y_2)}. We begin by showing that $\mathbf P_4$ applied to suitably truncated versions of $\mathbf f_4^*$ does not vanish. Then, we take one of these suitably truncated versions of $\mathbf f_4^*$ and, starting on a specific hyperboloid, evolve it according to the quadratic wave equation linearized around $u_1^*$ into the region $\Lambda_{t_0}$. 

For the rest of the paper,  $t_0$  denotes the constant from Lemma \ref{local unit solution}. With $r_0:=\frac{t_0}{4}$, we define the number	
				\begin{equation}
					y_0:=e^{s_0}\frac{(4+\sqrt{2})r_0^2+4(2+\sqrt{2})r_0}{8(r_0+2+\sqrt{2})}. \label{yT0}
				\end{equation}
			This number is chosen by solving the equation $r-r_0=1+e^{-s_0}h(e^{s_0}r)$ for $r$ and setting $y_0=e^{s_0}r$. For reasons to be made clear soon, consider the function $F: [0,\frac{1}{2}] \to \R$ defined by
				\begin{align}\label{Function_F}
		F(y)=\frac{f_{4,1}(y)}{W(y;4)}\frac{\tilde G_4(y)}{c_{12}(y)}-\left[\frac{c_{21}(f_{4,1}^*)^2}{W(\cdot;4)c_{12}}\right]'(y)
				\end{align}

where 
				$$
					\tilde G_4(y)=c_{21}(y)[f_{4,1}^*]'(y)+\big(c_{20}(y)-12\big)f_{4,1}^*(y)
				$$
			and
				$$
					W(y;4)=\frac{\left(y^2+1\right) \left(1-4 y^2\right)^{-4 } \left(3
   \sqrt{y^2+2}-y+4\right)^{2} \left(3 \sqrt{y^2+2}+y+4\right)^{2}}{y^6
   \sqrt{y^2+2} \sqrt{y^2+2 \sqrt{y^2+2}+3}}.
				$$
			A straightforward calculation shows that $F(0) = 0$ and that there exists $\delta_0>0$ so that $F(y)>0$ for $y\in(0,\delta_0)$. Now, let $\chi_{t_0}:[0,\infty)\to[0,1]$ be a smooth cutoff function with $\chi_{t_0}(y)=1$ for $y\leq\frac{1}{2}\min\{y_0,\delta_0\}$, $\chi_{t_0}(y)=0$ for $y\geq\frac{3}{4}\min\{y_0,\delta_0\}$. With this, we state and prove the following crucial lemma.
			\begin{lemma} \label{nonvanishing of the Riesz projection}
				We have $(\mathbf P_4(\chi_{t_0}\mathbf f_4^*)|\mathbf f_4^*)_{\mathcal H_R}\neq0$.
			\end{lemma}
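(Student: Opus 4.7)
The plan is to compute $\mathbf P_4(\chi_{t_0}\mathbf f_4^*)$ explicitly via the residue representation of the Riesz projection, reduce the result to a nonzero multiple of $\int_0^{1/2}\chi_{t_0}(y)F(y)\,dy$, and then use the stated positivity of $F$ together with the shape of $\chi_{t_0}$ to conclude. Since $\range\mathbf P_4=\langle\mathbf f_4^*\rangle$ by Proposition \ref{projection}, we have $\mathbf P_4(\chi_{t_0}\mathbf f_4^*)=c\,\mathbf f_4^*$ for some scalar $c$, so $(\mathbf P_4(\chi_{t_0}\mathbf f_4^*)\,|\,\mathbf f_4^*)_{\mathcal H_R}=c\,\|\mathbf f_4^*\|_{\mathcal H_R}^2$; thus the claim is equivalent to $c\neq 0$.

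By Definition \ref{Riesz projection defn}, $c\,\mathbf f_4^*$ is the residue of $\mathbf R_{\mathbf L}(\lambda)(\chi_{t_0}\mathbf f_4^*)$ at $\lambda=4$. Writing $\mathbf h:=\mathbf R_{\mathbf L}(\lambda)(\chi_{t_0}\mathbf f_4^*)$, the system $(\lambda\mathbf I-\mathbf L)\mathbf h=\chi_{t_0}\mathbf f_4^*$ eliminates $h_2$ via $h_2=(\lambda+2)h_1-\chi_{t_0}f_{4,1}^*$, and $h_1$ satisfies the inhomogeneous version of \eqref{homogeneous spectral equation} with right-hand side
\[
F_\lambda(y)=\frac{-6\chi_{t_0}(y)f_{4,1}^*(y)+(c_{20}(y)-\lambda-2)\chi_{t_0}(y)f_{4,1}^*(y)+c_{21}(y)(\chi_{t_0}f_{4,1}^*)'(y)}{c_{12}(y)},
\]
which at $\lambda=4$ becomes $[\chi_{t_0}\tilde G_4+c_{21}\chi_{t_0}'f_{4,1}^*]/c_{12}$. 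Let $\phi_\lambda$ denote an analytic-in-$\lambda$ family of solutions of the homogeneous equation regular at $y=0$ (so $\phi_4$ is a constant multiple of $f_{4,1}^*$), and $\hat\phi_\lambda$ the analogous family regular at the singular point $y=\tfrac12$. For $\lambda$ close to but distinct from $4$ these are linearly independent, $h_1$ on $[0,\tfrac12]$ is produced by variation of parameters with Wronskian $W(\phi_\lambda,\hat\phi_\lambda)$, and it extends smoothly on $(\tfrac12,R]$ where the ODE is non-singular. At $\lambda=4$ the two families collapse, so $W(\phi_\lambda,\hat\phi_\lambda)$ vanishes, and this zero is \emph{simple} because the definite-sign obstruction integral computed in the proof of Proposition \ref{spectrum} already showed that $(4\mathbf I-\mathbf L)$ has no non-trivial generalised eigenvector. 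The resolvent thus has a simple pole at $\lambda=4$, whose residue is a nonzero constant $C_0$ times
\[
\bigg(\int_0^{1/2}\frac{f_{4,1}^*(\rho)\bigl[\chi_{t_0}(\rho)\tilde G_4(\rho)+c_{21}(\rho)\chi_{t_0}'(\rho)f_{4,1}^*(\rho)\bigr]}{W(\rho;4)\,c_{12}(\rho)}\,d\rho\bigg)\mathbf f_4^*.
\]

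Integrating the $\chi_{t_0}'$-term by parts removes the derivative on the cutoff and produces exactly the function $F$ defined just above the lemma. The boundary contributions vanish: at $\rho=\tfrac12$ because $\chi_{t_0}$ has support in $[0,\tfrac34\min\{y_0,\delta_0\}]$, which lies strictly to the left of $\tfrac12$ (the smallness of $y_0$ being visible from \eqref{yT0}); and at $\rho=0$ because $c_{21}(0)=0$, which follows at once from $h'(0)=0$. This yields
\[
c=C_0\int_0^{1/2}\chi_{t_0}(y)\,F(y)\,dy.
\]
By hypothesis, $F>0$ on $(0,\delta_0)$, and by construction $\chi_{t_0}\geq 0$ with $\chi_{t_0}\equiv 1$ on $[0,\tfrac12\min\{y_0,\delta_0\}]$ and $\supp\chi_{t_0}\subset[0,\delta_0]$. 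Hence $\chi_{t_0}F\geq 0$ on $[0,\tfrac12]$ and is strictly positive on a set of positive Lebesgue measure, so the integral is strictly positive and $c\neq 0$.

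The main obstacle is the residue computation itself: one must carefully organise the variation-of-parameters construction with the $\lambda$-dependent fundamental system $\phi_\lambda,\hat\phi_\lambda$, keep track of the Wronskian normalisation so that the residue lands in the displayed form with the specific factor $1/W(\rho;4)$, and verify $C_0\neq 0$. That last point, however, reduces to the simplicity of the zero of $W(\phi_\lambda,\hat\phi_\lambda)$ at $\lambda=4$, which was already established inside the proof of Proposition \ref{spectrum}. Once the algebraic set-up is in place, the integration by parts and positivity argument above are routine.
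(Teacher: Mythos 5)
Your proof is correct and follows essentially the same approach as the paper: compute the residue of the resolvent at $\lambda=4$ via a variation-of-parameters representation with a $\lambda$-dependent fundamental system, argue that the relevant Wronskian factor has a simple zero at $\lambda=4$, integrate the $\chi_{t_0}'$ term by parts to reduce the residue to $\int_0^{1/2}\chi_{t_0}F$, and conclude from positivity of $F$. (One small correction: the definite-sign integral ruling out a generalised eigenvector at $\lambda=4$ is carried out in the proof of Proposition \ref{projection}, not Proposition \ref{spectrum}.)
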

			\begin{proof}
				Observe that $\chi_{t_0}\mathbf f_4^*\in C_e^\infty[0,R]^2$. By Proposition \ref{projection}, we must have $\mathbf P_4(\chi_{t_0}\mathbf f_4^*)=c\mathbf f_4^*$ for some $c\in\mathbb C$. Thus, $(\mathbf P_4(\chi_{t_0}\mathbf f_4^*)|\mathbf f_4^*)_{\mathcal H_R}=c\|\mathbf f_4^*\|_{\mathcal H_R}^2$. What is nontrivial, however, is to show that $c\neq0$. To show $c\neq0$, we turn our attention to investigating the first component of this Riesz projection.
				
				For $\lambda\in\rho(\mathbf L)$ the first component of $\mathbf u:=\mathbf R_\mathbf L(\lambda)(\chi_{t_0}\mathbf f_4^*)$ solves the ODE
					\begin{align}
					\begin{split}
						 u''(y)&+\frac{c_{11}(y)+(\lambda+2)c_{21}(y)}{c_{12}(y)}u'(y)+\frac{(\lambda+2)(c_{20}(y)-\lambda-2)+ V(y)}{c_{12}(y)}u(y)
						 \\
						 =&\frac{\chi_{t_0}(y)\tilde G_\lambda(y)+c_{21}(y)\chi_{t_0}'(y)f_{4,1}^*(y)}{c_{12}(y)} \label{inhom spectral eqn}
					\end{split}
					\end{align}
				on the interval $(0,R)$ where
					$$
						\tilde G_\lambda(y)=c_{21}(y)[f_{4,1}^*]'(y)+\big(c_{20}(y)-8-\lambda\big)f_{4,1}^*(y).
					$$
				For the homogeneous equation, the Frobenius indices at the singular point $y=0$ are $\{0,-5\}$ while at $y=\frac{1}{2}$ they are $\{0,1-\lambda\}$. Denote by $\phi_1(\cdot;\lambda)$ a solution of the homogeneous version of Equation \eqref{inhom spectral eqn} taking the index $-5$ at $y=0$ and $\phi_0(\cdot;\lambda)$ a solution of the homogeneous version of Equation \eqref{inhom spectral eqn} taking the index $0$ at $y=0$. Observe that $\phi_0(\cdot;\lambda)$ must take the index $1-\lambda$ at $y=\frac{1}{2}$ since, if otherwise, $\phi_0(\cdot;\lambda)\in C^\infty[0,1]$ and this is excluded by Proposition \ref{spectrum} for $\lambda\in\rho(\mathbf L)$. The Wronskian can be expressed as $W\big(\phi_1(\cdot;\lambda),\phi_0(\cdot;\lambda)\big)(y)=C(\lambda)W(y;\lambda)$ where $C(\lambda)$ is some constant depending on $\lambda$ and 
					$$
						W(y;\lambda)=\frac{\left(y^2+1\right) \left(1-4 y^2\right)^{-\lambda } \left(3
   \sqrt{y^2+2}-y+4\right)^{\lambda /2} \left(3 \sqrt{y^2+2}+y+4\right)^{\lambda /2}}{y^6
   \sqrt{y^2+2} \sqrt{y^2+2 \sqrt{y^2+2}+3}}.
					$$
				The fact that $\lambda=4$ is a simple pole of the resolvent implies that $C(\lambda)$ must vanish to order one at $\lambda=4$.
				
				Since neither of these two fundamental solutions live in $H_\text{rad}^6(\mathbb B^7_{1/2})$, variation of parameters implies
					\begin{align*}
						[\mathbf R_{\mathbf L}(\lambda)&(\chi_{t_0}\mathbf f_4^*)]_1(y)
						\\
						=&\phi_1(y;\lambda)\int_0^y\frac{\phi_0(\tilde y;\lambda)}{W\big(\phi_0(\cdot;\lambda),\phi_1(\cdot;\lambda)\big)(\tilde y)}\frac{\chi_{t_0}(\tilde y)\tilde G_\lambda(\tilde y)+c_{21}(\tilde y)\chi_{t_0}'(\tilde y)f_{4,1}^*(\tilde y)}{c_{12}(\tilde y)}d\tilde y
						\\
						&+\phi_0(y;\lambda)\int_y^{\frac{1}{2}}\frac{\phi_1(\tilde y;\lambda)}{W\big(\phi_0(\cdot;\lambda),\phi_1(\cdot;\lambda)\big)}\frac{\chi_{t_0}(\tilde y)\tilde G_\lambda(\tilde y)+c_{21}(\tilde y)\chi_{t_0}'(\tilde y)f_{4,1}^*(\tilde y)}{c_{12}(\tilde y)}d\tilde y.
					\end{align*}
				By repeated integration by parts, one can indeed see that the above expression is in $H_\text{rad}^6(\mathbb B^7_{1/2})$.

Since $\mathbf f_4^*$ is an eigenfunction, we must have that $\phi_0(\cdot;4)$ and $\phi_1(\cdot;4)$ are multiples of $f^*_{4,1}$. Consequently, we obtain
					$$
						[\mathbf P_4(\chi_{t_0}\mathbf f_4^*)]_1(y)=Cf_{4,1}^*(y)\int_0^\frac{1}{2}\frac{f_{4,1}^*(\tilde y)}{W(\tilde y;4)}\frac{\chi_{t_0}(\tilde y)\tilde G_4(\tilde y)+c_{21}(\tilde y)\chi_{t_0}'(\tilde y)f_{4,1}^*(\tilde y)}{c_{12}(\tilde y)}d\tilde y
					$$
				for some $C\in\mathbb C\setminus\{0\}$. An integration by parts yields
					\begin{align*}
						\int_0^\frac{1}{2}&\frac{f_{4,1}^*(\tilde y)}{W(\tilde y;4)}\frac{\chi_{t_0}(\tilde y)\tilde G_4(\tilde y)+c_{21}(\tilde y)\chi_{t_0}'(\tilde y)f_{4,1}^*(\tilde y)}{c_{12}(\tilde y)}d\tilde y
=\int_0^\frac{1}{2}\chi_{t_0} F(y) d\tilde y,
					\end{align*}
with $F$ defined in \eqref{Function_F}. By definition of $\chi_{t_0}$, the integrand is positive within $\supp(\chi_{t_0})$ which implies
					$$
						[\mathbf P_4(\chi_{t_0}\mathbf f_4^*)]_1(y)=c f_{4,1}^*(y)
					$$ 
				for some $c\neq0$. This implies the claim.
			\end{proof}

			\begin{lemma} \label{adjustment term}
There exists a smooth, radial solution $u$ of Equation \eqref{Eq:QWE_Linearized} with $T=1$ in the spacetime region $\Lambda_{t_0}$ with the following two properties:
					\begin{enumerate}
						\item $(u\circ\eta_1)(s_0,y)=e^{-2s_0}\chi_{t_0}(y)f_{4,1}^*(y)$ and $\partial_0(u\circ\eta_1)(s_0,y)=e^{-2s_0}\chi_{t_0}(y)f_{4,2}^*(y)$ for all $y\in[0,\infty)$ and
						\item $u|_{t=0}$ and $\partial_0u|_{t=0}$ have support contained in the interval $[0,r_0)$.
					\end{enumerate}
			\end{lemma}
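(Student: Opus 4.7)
The plan, following Section~\ref{Motivation and Overview for the Construction of (Y_1,Y_2)}, is to build $u$ in two stages that are glued via unique continuation for the linear wave equation. For the first stage I would apply the semigroup $\mathbf S$ from Lemma~\ref{well-posed} to the cutoff eigenfunction by setting
\begin{equation*}
\Phi(s):=e^{-4s_0}\,\mathbf S(s-s_0)\bigl(\chi_{t_0}\mathbf f_4^*\bigr),\qquad s\geq s_0,
\end{equation*}
on $\mathcal H^k_{1/2}$ for $k$ arbitrarily large. Since $\chi_{t_0}\mathbf f_4^*\in C_e^\infty[0,\tfrac12]^2\subset \mathcal D(\mathbf L^n)$ for every $n$, iterating $\partial_s^n\Phi=\mathbf L^n\Phi$ and invoking Sobolev embedding at arbitrarily high regularity gives $\Phi\in C^\infty([s_0,\infty)\times[0,\tfrac12))$. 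Setting $v(s,y):=e^{2s}[\Phi(s)]_1(y)$ (so that $\partial_s v=e^{2s}[\Phi]_2$, using that $[\mathbf L\mathbf f]_1=f_2-2f_1$), the function $u_{\text{lin}}:=v\circ\eta_1^{-1}$ is a smooth radial solution of \eqref{Eq:QWE_Linearized} with $T=1$ on the hyperboloidal region $\eta_1([s_0,\infty)\times[0,\tfrac12))$. Property (1) of the lemma follows by direct evaluation at $s=s_0$; the prefactor $e^{-4s_0}$ is chosen precisely to produce the $e^{-2s_0}$ demanded by the statement.

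The second ingredient is finite speed of propagation. Since $\chi_{t_0}$ vanishes on $[\tfrac34\min\{y_0,\delta_0\},\tfrac12]$, interpreting $u_{\text{lin}}$ as a solution of a linear scalar wave equation with smooth potential $-2u_1^*$ and invoking the uniqueness principle (e.g.~\cite[Lemma~12.8]{R09}) forces $u_{\text{lin}}$ to vanish outside the forward causal future of $\eta_1(\{s_0\}\times[0,\tfrac34\min\{y_0,\delta_0\}])$. The definition \eqref{yT0} of $y_0$ is engineered so that $\eta_1(s_0,y_0)$ lies on the outgoing null ray through $(0,r_0)$, whence intersecting this causal future with the slice $t=0$ yields
\begin{equation*}
u_{\text{lin}}(0,r)=\partial_0u_{\text{lin}}(0,r)=0\qquad\text{for all }r\geq r_0.
\end{equation*}
Consequently the zero extensions $(Y_1,Y_2)$ defined in \eqref{U1}--\eqref{U2} are smooth radial functions on $[0,\infty)$ supported in $[0,r_0)$.

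To extend to all of $\Lambda_{t_0}$, I would solve the linear Cauchy problem
\begin{equation*}
\begin{cases}
\bigl(\partial_t^2-\partial_r^2-\tfrac{6}{r}\partial_r\bigr)\tilde v(t,r)=2u_1^*(t,r)\tilde v(t,r),& (t,r)\in[-t_0,t_0]\times[0,\infty),\\
\tilde v[0](r)=\bigl(Y_1(r),Y_2(r)\bigr),& r\in[0,\infty),
\end{cases}
\end{equation*}
via standard Cauchy theory for the linear radial wave equation with smooth time-dependent potential (\cite[Theorem~3.2]{S08}); this applies because $u_1^*$ is smooth on the strip $[-t_0,t_0]\times[0,\infty)$ (recall $t_0<1$), so a unique smooth radial solution $\tilde v$ exists. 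Finite speed of propagation and the compact support of $(Y_1,Y_2)$ in $[0,r_0)$ force $\tilde v$ to vanish where $r\geq|t|+r_0$, so gluing $\tilde v$ on the strip with $u\equiv0$ on $\{(t,r)\in\Lambda_{t_0}:r\geq|t|+r_0\}$ defines a smooth radial function $u$ on all of $\Lambda_{t_0}$ satisfying \eqref{Eq:QWE_Linearized}. On the overlap of $\Lambda_{t_0}$ with $\eta_1([s_0,\infty)\times[0,\tfrac12))$ both $u$ and $u_{\text{lin}}$ are smooth solutions of the same linear equation with matching Cauchy data at $t=0$, so they coincide by uniqueness and property (1) is inherited from $u_{\text{lin}}$. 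Property (2) is built into the construction of $(Y_1,Y_2)$.

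The main technical obstacle is the finite speed of propagation analysis in stage two: one must verify that $y_0$ in \eqref{yT0} exactly solves $r-r_0=1+e^{-s_0}h(e^{s_0}r)$ (an algebraic check using the definition of $\eta_1$ and $h$), and then argue carefully that the forward causal future of the subset of the initial hyperboloid where $\chi_{t_0}$ is nonzero avoids $\{0\}\times[r_0,\infty)$. All remaining steps reduce to standard semigroup-theoretic regularity arguments and classical Cauchy theory for the linear wave equation.
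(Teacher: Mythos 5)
Your proposal follows essentially the same route as the paper's proof: solve the abstract Cauchy problem $\partial_s\Phi=\mathbf L\Phi$ on $\mathcal H^k_{1/2}$ with initial data proportional to $\chi_{t_0}\mathbf f_4^*$, bootstrap regularity via the invariance of $C_e^\infty[0,\tfrac12]^2$ under $\mathbf L$ and Sobolev embedding at arbitrarily high $k$, transform back to physical variables through $v(s,y)=e^{2s}[\Phi(s)]_1(y)$ and $u_{\text{lin}}=v\circ\eta_1^{-1}$, invoke finite speed of propagation together with the choice of $y_0$ in \eqref{yT0} to get vanishing of $u_{\text{lin}}[0]$ for $r\geq r_0$, define $(Y_1,Y_2)$ by zero extension, and finally extend $u_{\text{lin}}$ to all of $\Lambda_{t_0}$ by solving the linear Cauchy problem \eqref{linearized CP} and appealing to uniqueness. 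Your computation $\partial_s v=e^{2s}[\Phi]_2$ using $[\mathbf L\mathbf f]_1=f_2-2f_1$ is correct and is the reason both components of property (1) come out simultaneously.

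The one genuine deviation is the prefactor $e^{-4s_0}$ you insert into the initial data. The paper's proof takes $\Phi(s_0)=\chi_{t_0}\mathbf f_4^*$ without any prefactor, which yields $(u_{\text{lin}}\circ\eta_1)(s_0,y)=e^{2s_0}\chi_{t_0}(y)f_{4,1}^*(y)$ rather than the $e^{-2s_0}$ asserted in the lemma statement; this unscaled normalization is also exactly what Lemma \ref{ID operator} later uses via the identity $e^{-2s}\bigl(u_{\text{lin}}\circ\eta_1,\partial_s(u_{\text{lin}}\circ\eta_1)\bigr)|_{s=s_0}=\chi_{t_0}\mathbf f_4^*$. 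So you have in effect spotted and repaired a sign typo in the exponent appearing in the lemma statement (or, viewed from the other side, you are proving the statement as literally written, at the cost of carrying an extra constant $e^{-4s_0}$ into the expansion in Lemma \ref{ID operator}, which is harmless since it only rescales the eigendirection). The second small difference, solving the linearized Cauchy problem on the strip $[-t_0,t_0]\times[0,\infty)$ and gluing with zero on the outside wedge of $\Lambda_{t_0}$ rather than solving directly on $\Lambda_{t_0}$, is an equivalent reorganization and is fine.

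Two minor cautions on wording: the finite-speed-of-propagation step should be phrased in terms of the domain of dependence of the complement of $\supp\chi_{t_0}$ on the initial hyperboloid within the region $\eta_1\bigl([s_0,\infty)\times[0,\tfrac12)\bigr)$ where $u_{\text{lin}}$ is defined, rather than as vanishing outside a ``forward causal future'' (which on its own does not cover the intersection with $\{t=0\}$); and one should explicitly note that $\eta_1(s_0,0)=(-\tfrac{r_0}{2},0)$ so the initial hyperboloid really does lie inside the strip where the subsequent Cauchy theory applies. Both points are implicit in the paper's Figures~\ref{eta1_region} and \ref{extension_region} and do not affect the correctness of your argument.
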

			\begin{proof}
Let $k\in\mathbb N$, $k\geq6$ and consider the abstract initial value problem
					$$
					\begin{cases}
						\partial_s\Phi(s)=\mathbf  L\Phi(s)
						\\
						\Phi(s_0)=\chi_{t_0}\mathbf  f_4^*
					\end{cases}
					$$
				on the space $\mathcal H_{1/2}^k$. For $k=6$, we have that the unique solution in $C([s_0,\infty),\mathcal H^6_{1/2})$ is given by 
					$$
						\Phi(s):=\mathbf S(s-s_0)(\chi_{t_0}\mathbf  f_4^*) \quad \text{for } s\geq s_0
					$$
				where $\big(\mathbf S(s)\big)_{s\geq0}$ is the semigroup from Lemma \ref{well-posed}. For $k>6$, Lemma \ref{generation} along with the bounded perturbation theorem imply that $\mathbf L$ is also the generator of a semigroup on $\mathcal H_{1/2}^k$ which we will denote by $\big(\mathbf S_k(s)\big)_{s\geq0}$. Observe that for any $k\geq6$, $\mathbf S(s)|_{\mathcal H_{1/2}^k}=\mathbf S_k(s)$ for all $s\geq0$ following the argument of Lemma 3.5 of \cite{CGS21}. Since $\chi_{t_0}\mathbf  f_4^*\in C_e^\infty[0,R]^2$, we have as a consequence that $\Phi$ is the unique solution in $C([s_0,\infty),\mathcal H^k_{1/2})$ for any $k\geq6$. Consequently, $\Phi(s)\in\mathcal H^k_{1/2}$ for any $k\in\mathbb N$, $k\geq6$ and $s\geq s_0$ which, by Sobolev embedding, implies $\Phi(s)(|\cdot|)\in C^\infty(\mathbb B_{1/2}^7)^2$ for all $s\geq s_0$.
				
Since $\chi_{t_0}\mathbf  f_4^*\in C_e^\infty[0,R]^2$, Theorem 6.1.5 of \cite{P83} implies that $\Phi\in C^1([s_0,\infty),\mathcal H^k_{1/2})$ and
					\begin{equation}
						\partial_s\Phi(s)=\mathbf  S(s-s_0)\big(\mathbf  L(\chi_{t_0}\mathbf  f_4^*)\big). \label{derivative of eqn}
					\end{equation}
				Thus, $\partial_s\Phi(s)(|\cdot|)\in C^\infty(\mathbb B_{1/2}^7)^2$ for all $s\geq s_0$. From Equation \eqref{derivative of eqn}, we find that $\partial_s\Phi\in C^1([s_0,\infty),\mathcal H^k_{1/2})$. Thus, by an inductive argument, we conclude that for all $m\in\mathbb N$, $\Phi\in C^m([s_0,\infty),\mathcal H^k_{1/2})$ with $\partial_s^m\Phi(s)=\mathbf S(s-s_0)\big(\mathbf  L^m(\chi_{t_0}\mathbf  f_4^*)\big)$. Again by Sobolev embedding, $\partial_s^m\Phi(s)(|\cdot|)\in C^\infty(\mathbb B_{1/2}^7)^2$ for all $m\in\mathbb N$ and $s\geq s_0$. Since all $s$ and $y$ derivatives exist to arbitrary order and are continuous, we conclude that $\Phi\in C^\infty([s_0,\infty)\times[0,\frac{1}{2}))^2$.
				
				Upon defining $v(s,y):=e^{2s}\phi_1(s,y)$, we have that 
	\[u_\text{lin}(t,r):=\big(v\circ\eta_1^{-1}\big)(t,r)\]
is a smooth solution of Equation \eqref{Eq:QWE_Linearized} with $T=1$ in the spacetime region $\eta_1([s_0,\infty)\times[0,\frac{1}{2}))$ satisfying the first of the two claimed properties. To begin extending the domain of $u_\text{lin}$ we define functions 
					$$
						Y_1(r):=
						\begin{cases}
							u_\text{lin}(0,r)&r\leq r_0,
							\\
							0&r\geq r_0,
						\end{cases} 
					$$
				and
					$$
						Y_2(r):=
						\begin{cases}
							\partial_0u_\text{lin}(0,r)&r\leq r_0,
							\\
							0&r\geq r_0.
						\end{cases}.
					$$
			Clearly, for $r>r_0$ all derivatives of $Y_1$ and $Y_2$ vanish. For $r<r_0$, all derivatives of $Y_1$ and $Y_2$ exist and are continuous since, for such $r$, both functions are the composition of smooth functions. All $r$-derivatives of $u_\text{lin}(0,\cdot)$ from the left vanish and $u_\text{lin}(0,r_0)$ by the choice of cutoff $\chi_{t_0}$ implying smoothness of $Y_1$ and $Y_2$ at $r=r_0$.
				
				Now, consider the Cauchy problem 
					\begin{equation}
						\begin{cases}
							\Big(\partial_t^2-\partial_r^2-\frac{d-1}{r}\partial_r\Big)u(t,r)=2u_1^*(t,r)u(t,r)&(t,r)\in\Lambda_{t_0}
							\\
							u[0](r)=\big(Y_1(r),Y_2(r)\big)&r\in[0,\infty) \label{linearized CP}
						\end{cases}.
					\end{equation}
				Since $u_1^*\in C^\infty(\Lambda_{t_0})$ and is radial along with $Y_1(|\cdot|),Y_2(|\cdot|)\in C^\infty(\mathbb R^7)$, the Cauchy problem \eqref{linearized CP} has a unique radial solution $u\in C^\infty(\Lambda_{t_0})$. Furthermore, since the data satisfies $\supp(Y_1,Y_2)\subseteq[0,r_0)$ and is specified at $t=0$, finite speed of propagation implies that $u=0$ in the spacetime region $\{(t,r)\in\mathbb R\times[0,\infty):r_0-r\leq t\leq r_0+r,\;r\geq r_0\}$. Finally, uniqueness of solutions to linear wave equations allows us to conclude that the solution we have just produced satisfies $u(t,r)=u_\text{lin}(t,r)$ for $(t,r)\in\eta_1([s_0,\infty)\times[0,\frac{1}{2}))$, i.e., the solution $u$ smoothly extends $u_\text{lin}$. Consequently, we have a function $u_\text{lin}\in C^\infty(\Lambda_{t_0})$ which solves Equation \eqref{Eq:QWE_Linearized} and satisfies 
		\[ (u_\text{lin}\circ\eta_1)(s_0,y)=e^{-2s_0}\chi_{t_0}(y)f_{4,1}^*(y)\text{ and }\partial_0(u_\text{lin}\circ\eta_1)(s_0,y)=e^{-2s_0}\chi_{t_0}(y)f_{4,2}^*(y).\] 
			\end{proof}
		
		\section{Evolution of Physical Initial Data} \label{Evolution of Physical Initial Data}
		
In this section, we evolve sufficiently small, smooth, radial perturbations of $u_1^*$ adjusted by some multiple of $u_\text{lin}$ in the spacetime region $\Lambda_{t_0}$ according to the quadratic wave equation. In this region, we will obtain uniform control of sufficiently many derivatives of the evolution in order to continue it via the hyperboloidal formulation. For convenience, we define
\[
					\mathcal B_{\delta,M_0}:=\{(f,g)\in C^{\infty}(\B^7)^2\text{ radial}:\supp(f,g)\subset \B^7_{r_0},\|(f,g)\|_{H^{10}(\mathbb R^7)\times H^{9}(\mathbb R^7)}\leq \tfrac{\delta}{M_0^2}\}
\]
			for $\delta,M_0>0$. Furthermore, due to the need to switch between the radial and non-radial picture, we also define the spacetime region
				\begin{align}\label{Def:Rotated_Lambda_Region}
					\overline\Lambda_{t_0}:=[-t_0,t_0]\times\R^7\cup\{(t,x)\in\mathbb R\times\R^7:-|x|+r_0\leq t\leq |x|-r_0\}.
				\end{align}
			
			\begin{lemma} \label{local adjusted CP}
				Let $t_0>0$ be as in Lemma \ref{local unit solution}. For all sufficiently small $\delta>0$ and sufficiently large $M_0>0$, we have that for all $(f,g)\in\mathcal B_{\delta,M_0}$ and $|\alpha|\leq\frac{\delta}{M_0}$, the initial value problem
					\begin{equation}
					\begin{cases}
						\big(\partial_t^2-\Delta_x\big)u(t,x)=u(t,x)^2&(t,x)\in\overline\Lambda_{t_0}
						\\
						u[0](x)=u_1^*[0](|x|)+\alpha u_\text{\normalfont{lin}}[0](|x|)+\big(f(|x|),g(|x|)\big) \label{adjusted CP}&x\in\R^7
					\end{cases}
					\end{equation}
				has a unique radial solution $u_{f,g,\alpha}\in C^\infty(\overline\Lambda_{t_0})$ of the form $$u_{f,g,\alpha}(t,x)=u_1^*(t,|x|)+\alpha u_\text{\normalfont{lin}}(t,|x|)+\varphi_{f,g,\alpha}(t,x)$$ which depends continuously on $\alpha$ and
					\begin{align}
						\sup_{(t,x)\in\overline\Lambda_{t_0}}|\partial^\kappa\varphi_{f,g,\alpha}(t,x)|\lesssim\frac{\delta}{M_0^2} \label{uniform control}
					\end{align}
				for all multi-indices $\kappa\in\mathbb N_0^8$ with $|\kappa|\leq6$.
			\end{lemma}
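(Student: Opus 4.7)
The plan is to set $\varphi:=u-u_1^*-\alpha u_\text{lin}$ and derive a perturbative equation for $\varphi$, then solve it by a contraction mapping in a space adapted to the truncated light cone, following closely the strategy of Lemma \ref{local unit solution}. Using $\Box u_1^*=(u_1^*)^2$ and $\Box u_\text{lin}=2u_1^*u_\text{lin}$ (Lemma \ref{adjustment term}), expansion of $(u_1^*+\alpha u_\text{lin}+\varphi)^2$ shows that $u$ solves the quadratic wave equation if and only if $\varphi$ solves
\[
\Box\varphi=2\big(u_1^*+\alpha u_\text{lin}\big)\varphi+\varphi^2+\alpha^2 u_\text{lin}^2,\qquad \varphi[0]=(f,g).
\]
Here the ingredients on the right-hand side are controlled: $u_1^*$ and $u_\text{lin}$ are smooth on $\overline\Lambda_{t_0}$ (the blowup point $(1,0)$ lies outside $\overline\Lambda_{t_0}$ since $t_0<4/9<1$), the source term $\alpha^2 u_\text{lin}^2$ has size $O(\delta^2/M_0^2)$, and the forcing $(f,g)$ has size $\delta/M_0^2$.

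Next I would solve the equation in the strip $[-t_0,t_0]\times\R^7$ by a Duhamel fixed point argument on the ball $\{\varphi\in X_1^{10}(t_0): \|\varphi\|_{X_1^{10}(t_0)}\leq C\delta/M_0^2\}$, using the map
\[
\varphi\mapsto \cos(t|\nabla|)f+\tfrac{\sin(t|\nabla|)}{|\nabla|}g+\int_0^t\tfrac{\sin((t-s)|\nabla|)}{|\nabla|}\Big[2(u_1^*+\alpha u_\text{lin})\varphi+\varphi^2+\alpha^2 u_\text{lin}^2\Big]\,ds.
\]
The energy estimates of Proposition \ref{finite speed of propagation} together with the Banach algebra bound \eqref{banach algebra} handle the nonlinearity exactly as in Lemma \ref{local unit solution}; the only new pieces are the linear term $2\alpha u_\text{lin}\varphi$ (which is of size $|\alpha|\|u_\text{lin}\|_{X^{10}}\|\varphi\|_{X^{10}}\lesssim\delta/M_0\cdot\delta/M_0^2$, hence harmless once $M_0$ is large) and the source $\alpha^2 u_\text{lin}^2$, which gives a contribution of size $(\delta/M_0)^2\leq\delta/M_0^2$ provided $\delta\leq 1$. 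The linear coupling $2u_1^*\varphi$ is absorbed either by a Gronwall step or by shrinking $t_0$ further; since $t_0$ only needs to satisfy finitely many smallness conditions and the perturbation is now small, the argument closes. Negative times $t\in[-t_0,0]$ are handled by the same argument applied to the time-reversed problem. For $(t,x)\in\overline\Lambda_{t_0}$ with $|t|>t_0$, we have $|x|\geq|t|+r_0$, which lies outside the light cone of $\B^7_{r_0}\supseteq\supp(f,g,u_\text{lin}[0])$; finite speed of propagation then forces $\varphi\equiv 0$ there, so $\varphi$ extends trivially to all of $\overline\Lambda_{t_0}$.

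Smoothness of $\varphi$ on $\overline\Lambda_{t_0}$ follows from a persistence-of-regularity argument: iterating the fixed point in $X_1^{k}(t_0)$ for each $k\geq 10$ (using that $f,g\in C_c^\infty$, and that $u_1^*,u_\text{lin}$ are smooth on $\overline\Lambda_{t_0}$) yields $\varphi\in X_1^{k}(t_0)$ for all $k$, and hence $\varphi\in C^\infty(\overline\Lambda_{t_0})$ by Sobolev embedding and a bootstrap on $\partial_t\varphi$ from the PDE. The uniform bound \eqref{uniform control} is obtained from the Sobolev embedding $H^{10}(\B^7_{1-t})\hookrightarrow C^6(\overline{\B^7_{1-t}})$ applied to the $X_1^{10}$-estimate on $\varphi$; inside the strip this covers all of $\overline\Lambda_{t_0}\cap([-t_0,t_0]\times\R^7)$, and outside the strip $\varphi$ vanishes identically. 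Continuity in $\alpha$ is a consequence of the Lipschitz dependence of the fixed point on parameters, which is built into the contraction estimate above.

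The main obstacle I anticipate is bookkeeping the size of the linear potential term $2(u_1^*+\alpha u_\text{lin})\varphi$ at the $H^{10}$ level: since $u_1^*$ is not small, the constant in front of $\|\varphi\|_{X^{10}}$ in the contraction estimate is $O(t_0)$ rather than $O(\delta)$, so the smallness $M_0\gg 1$ has to be used carefully to ensure the resulting bound on $\varphi$ scales as $\delta/M_0^2$ rather than merely as $\delta/M_0$. Concretely, one separates the contraction estimate (which needs $t_0$ small, already ensured by Lemma \ref{local unit solution}) from the a priori bound on the size of the solution (which benefits from the smallness of the data after one absorbs the $u_1^*\varphi$ contribution into the Duhamel iterate via Gronwall); this two-step argument is standard but requires some care to obtain the quadratic gain in $M_0$.
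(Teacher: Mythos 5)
Your proposal reproduces the paper's proof essentially step for step: peel off $u_1^*+\alpha u_{\text{lin}}$, solve $\Box\varphi=\varphi^2+2(u_1^*+\alpha u_{\text{lin}})\varphi+\alpha^2 u_{\text{lin}}^2$ with data $(f,g)$ by Duhamel contraction in $X_1^{10}(t_0)$ on a ball of radius $\sim\delta/M_0^2$ using Proposition~\ref{finite speed of propagation} and the Banach algebra bound~\eqref{banach algebra}, run the time-reversed argument for negative $t$, invoke finite speed of propagation and upgrade of regularity to get $C^\infty(\overline\Lambda_{t_0})$, and land on~\eqref{uniform control} via Sobolev embedding plus the equation for higher time derivatives. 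One minor slip: shrinking $t_0$ is not available at this stage, since $r_0=t_0/4$, $\overline\Lambda_{t_0}$, and $u_{\text{lin}}$ are all already fixed in terms of $t_0$; no Gronwall step is needed either, because the $t_0$-smallness condition required to absorb the term $2u_1^*\varphi$ is precisely the one already imposed in Lemma~\ref{local unit solution}, and the new $u_{\text{lin}}$-dependent constants are absorbed by taking $\delta$ smaller.
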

			\begin{proof}
				Since $u_\text{lin}[0]$ and $(f,g)$ are supported in the interval $[0,r_0)$, we can ensure that for all $\delta>0$ sufficiently small and $M_0>0$ sufficiently large, $\|\alpha u_\text{lin}[0]+(f,g)\|_{H^{10}(\mathbb R^7)\times H^9(\mathbb R^7)}\leq1$ holds. Thus, by applying Lemma \ref{local unit solution}, Equation \eqref{adjusted CP} has a unique strong $H^{10}$-solution in the truncated light cone up to time $t_0$. It remains to show that our solution is of the stated form. To that end, we solve the equation
					\begin{align}
					\begin{split}
						\big(\partial_t^2-\Delta_x\big)\varphi(t,x)=\varphi(t,x)^2+2\big(u_1^*(t,|x|)+\alpha u_\text{lin}(t,|x|)\big)\varphi(t,x)+\alpha^2u_\text{lin}(t,|x|)^2 \label{lin CP}
					\end{split}
					\end{align}
				for $(t,x)\in\bigcup_{s\in[0,t_0]}\{s\}\times\mathbb B^7_{1-t}$ with initial data
					$
						\varphi[0]=(f,g).
					$
				 A strong $H^{10}$-solution of \eqref{lin CP} yields a strong $H^{10}$-solution of the original Cauchy problem by setting $u(t,x)=u_1^*(t,|x|)+\alpha u_\text{lin}(t,|x|)+\varphi(t,x)$. Set 
					$$
						Y'(t_0):=\bigg\{\varphi\in X_1^{10}(t_0):\|\varphi\|_{X_1^{10}(t_0)}\leq2\gamma_0\frac{\delta}{M_0^2}\bigg\}.
					$$
				Define a map $\mathcal K_{f,g,\alpha}$ on $Y'(t_0)$ by
					$$
						\mathcal K_{f,g,\alpha}(\varphi)(t):=\cos(t|\nabla|)f+\frac{\sin(t|\nabla|)}{|\nabla|}g+\int_0^t\frac{\sin\big((t-s)|\nabla|\big)}{|\nabla|}\mathcal N_\alpha\big(s,\cdot,\varphi(s,\cdot)\big)ds,\;t\in[0,t_0]
					$$
				where
					$$
						\mathcal N_\alpha\big(t,x,\varphi(t,x)\big):=\varphi^2(t,x)+2(u_1^*(t,|x|)+\alpha u_\text{lin}(t,|x|))\varphi(t,x)+\alpha^2u_\text{lin}^2(t,|x|).
					$$
				Similar to the proof of Lemma \ref{local unit solution}, we have
					\begin{align*}
						\|\mathcal K_{f,g,\alpha}&(\varphi)(t)\|_{H^{10}(\mathbb B_{1-t}^7)}
						\\
						\leq&\gamma_0\|f\|_{H^{10}(\mathbb B_{1}^7)}+\gamma_0\|g\|_{H^{9}(\mathbb B_{1}^7)}+\gamma_0\int_0^t\|\mathcal N_\alpha(s,\cdot,\varphi(s,\cdot))\|_{H^{9}(\mathbb B_{1-s}^7)}ds
						\\
						\leq&\gamma_0\frac{\delta}{M_0^2}+\gamma_0 C_0\int_0^t\bigg(\|\varphi(s,\cdot)\|^2_{H^{9}(\mathbb B_{1-s}^7)}
						\\
						&+2(\|u_1^*(s,|\cdot|)\|_{H^{9}(\mathbb B_{1-s}^7)}+|\alpha|\|u_\text{lin}(s,|\cdot|)\|_{H^{9}(\mathbb B_{1-s}^7)})\|\varphi(s,\cdot)\|_{H^{9}(\mathbb B_{1-s}^7)}
						\\
						&+\alpha^2\|u_\text{lin}(s,|\cdot|)\|^2_{H^{9}(\mathbb B_{1-s}^7)}\bigg)ds
						\\
						\leq&\gamma_0\frac{\delta}{M_0^2}+\gamma_0 C_0t_0\bigg(\|\varphi\|^2_{X_1^{10}(t_0)}+2(\|u_1^*\|_{X_1^{10}(\frac{4}{9})}+|\alpha|\|u_\text{lin}\|_{X_1^{10}(t_0)})\|\varphi\|_{X_1^{10}(t_0)}
						\\
						&+\alpha^2\|u_\text{lin}\|^2_{X_1^{10}(t_0)}\bigg)
						\\
						\leq&\gamma_0\frac{\delta}{M_0^2}+\gamma_0 C_0t_0\bigg((2\gamma_0\frac{\delta}{M_0^2})^2+2\left(\|u_1^*\|_{X_1^{10}(\frac{4}{9})}+\frac{\delta}{M_0}\|u_\text{lin}\|_{X_1^{10}(t_0)}\right )2\gamma_0\frac{\delta}{M_0^2}
						\\
						&+\big(\frac{\delta}{M_0}\big)^2\|u_\text{lin}\|_{X_1^{10}(t_0)}^2\bigg)
					\end{align*}
				where the constant $C_0$ is the same constant as in Equation \eqref{banach algebra}. Thus, it follows that $\mathcal K_{f,g,\alpha}:Y'(t_0)\to Y'(t_0)$ provided the inequality
					$$
						C_0t_0\Big((2\gamma_0)^2\frac{\delta}{M_0^2}+2\big(\|u_1^*\|_{X_1^{10}(\frac{4}{9})}+\frac{\delta}{M_0}\|u_\text{lin}\|_{X_1^{10}(t_0)}\big)2\gamma_0+\delta\big(\|u_\text{lin}\|_{X_1^{10}(t_0)}\big)^2\Big)\leq1
					$$
				holds. Recalling that $t_0>0$ was chosen so that the inequality
					$$
						C_0t_0\big((2\gamma_0)^2+4\|u_1^*\|_{X_1^{10}(\frac{4}{9})} \gamma_0\big)\leq1
					$$
				held, we observe that by considering smaller $\delta$ if necessary, we can ensure that the desired inequality is satisfied. Similarly, given $\varphi,\psi\in Y'(t_0)$, 
					\begin{align*}
						\|\mathcal K_{f,g,\alpha}&(\varphi)(t)-\mathcal K_{f,g,\alpha}(\psi)(t)\|_{H^k(\mathbb B_{1-t}^7)}
						\\
						\leq&\gamma_0\int_0^t\|\mathcal N_\alpha(s,\cdot,\varphi(s,\cdot))-\mathcal N_\alpha(s,\cdot,\psi(s,\cdot))\|_{H^{9}(\mathbb B_{1-s}^7)}ds
						\\
						\leq&\gamma_0\int_0^t\bigg(\|(\varphi(s,\cdot)-\psi(s,\cdot))^2\|_{H^{9}(\mathbb B_{1-s}^7)}
						\\
						&+2\|(u_1^*(s,|\cdot|)+\alpha u_\text{lin}(s,|\cdot|))(\varphi(s,\cdot)-\psi(s,\cdot))\|_{H^{9}(\mathbb B_{1-s}^7)}\bigg)ds
						\\
						\leq&\gamma_0 C_0t_0\Big(4\gamma_0\frac{\delta}{M_0^2}+2\big(\|u_1^*\|_{X_1^{10}(\frac{4}{9})}+\frac{\delta}{M_0}\|u_\text{lin}\|_{X_1^{10}(t_0)}\big)\Big)\|\varphi-\psi\|_{X_1^{10}(t_0)}.
					\end{align*}
				Thus $\mathcal K_{f,g,\alpha}$ is Lipschitz on $Y'(t_0)$ with Lipschitz constant at most $\frac{1}{2}$ provided that the inequality
					$$
						C_0\gamma_0 t_0\Big(4\gamma_0\frac{\delta}{M_0^2}+2\big(\|u_1^*\|_{X_1^{10}(\frac{4}{9})}+\frac{\delta}{M_0}\|u_\text{lin}\|_{X_1^{10}(t_0)}\big)\Big)\leq\frac{1}{2}
					$$
				holds. Again, recalling that $t_0>0$ was chosen so that, in addition, the inequality
					$$
						C_0\gamma_0 t_0\big(4\gamma_0+2\|u_1^*\|_{X_1^{10}(t_0)}\big)\leq\frac{1}{2}
					$$
				held, we observe that by considering smaller $\delta$ if necessary, we can ensure that the desired inequality is satisfied. Thus, $\mathcal K_{f,g,\alpha}$ is a contraction on the closed subspace $Y'(t_0)$ of the Banach space $X_1^{10}(t_0)$. The Banach fixed point theorem implies the existence of a unique fixed point, namely $\varphi_{f,g,\alpha}\in Y'(t_0)$, of $\mathcal K_{f,g,\alpha}$. 
				
				By the same argument, we obtain a solution for $(t,x)\in\bigcup_{s\in[-t_0,0]}\{s\}\times\mathbb B^d_{1-|t|}$ thereby extending the domain of $\varphi_{f,g,\alpha}$ to $(t,x)\in\bigcup_{s\in[-t_0,t_0]}\{s\}\times\mathbb B^d_{1-|t|}$. Moreover, by upgrade of regularity, see Theorem 2.14 of \cite{BDS19}, we infer that $\varphi_{f,g,\alpha}$ is smooth. Finite speed of propagation together with the fact that $(f,g)$ are compactly supported imply that $\varphi_{f,g,\alpha} \in C^\infty(\overline\Lambda_{t_0})$. From Sobolev embedding, and using the equation to estimate time derivatives of higher order, we infer that 
\[  \sup_{(t,x) \in \overline\Lambda_{t_0}} | \partial^{\kappa} \varphi_{f,g,\alpha} (t,x)| \lesssim \frac{\delta}{M_0^2}  \]
for spacetimes derivatives with  $\kappa \in \N_0^8$, $|\kappa| \leq 6$. Since the initial data and coefficients in the equation are radial, the same is true for $\varphi_{f,g,\alpha}$. Finally, setting 
 \[u_{f,g,\alpha}(t,x):=u_1^*(t,|x|)+\alpha u_\text{lin}(t,|x|)+\varphi_{f,g,\alpha}(t,x)\]
and using the information on the support of $\varphi_{f,g,\alpha}$ and $u_\text{lin}$ yields $u_{f,g,\alpha}\in C^\infty(\overline{\Lambda_{t_0}})$. Continuous dependence on $\alpha$ follows from a standard argument.
			\end{proof}
			
		From this point forward, we will abuse notation and not distinguish the functions $u_{f,g,\alpha}$ and $\varphi_{f,g,\alpha}$ from their radial representatives. More precisely, we will think of $u_{f,g,\alpha}$ and $\varphi_{f,g,\alpha}$ as functions on $\Lambda_{t_0}$ instead of functions on $\overline\Lambda_{t_0}$.
		
		\subsection{The Initial Data Operator}
			In this section, we use the solution obtained in Lemma \ref{local adjusted CP} to obtain data on a family of hyperboloids. Then, by a fixed point argument we will show that we can continue its evolution into $\Omega_{T,R}$ using the nonlinear theory developed in Section \ref{Well-Posedness and Decay of the Nonlinear Evolution} for at least one choice of $T$ and $\alpha$. First, we define a map which sends initial data to the restriction of the solution obtained in Lemma \ref{local adjusted CP} on a family of hyperboloids.
			\begin{definition}
				Let $\delta>0$ be sufficiently small and $M_0>0$ be sufficiently large so that, given $(f,g)\in\mathcal B_{\delta,M_0}$ and $\alpha\in[-\frac{\delta}{M_0},\frac{\delta}{M_0}]$, the unique solution of \eqref{adjusted CP}, $u_{f,g,\alpha}\in C^\infty(\Lambda_{t_0})$, exists. Then we set
					$$
						\mathbf U((f,g),\alpha,\beta):=e^{-2s}
						\begin{pmatrix}
							u_{f,g,\alpha}\circ\eta_{1+\beta}-u_{1+\beta}^*\circ\eta_{1+\beta}
							\\
							\partial_s(u_{f,g,\alpha}\circ\eta_{1+\beta})-\partial_s(u_{1+\beta}^*\circ\eta_{1+\beta})
						\end{pmatrix}\Bigg|_{s=s_0}.
					$$
				We call $\mathbf U$ the \textit{initial data operator}.
			\end{definition}
			We have the following mapping properties.
			\begin{lemma} \label{ID operator}
				For all $\delta>0$ sufficiently small and $M_0>0$ sufficiently large, the initial data operator $\mathbf U:\mathcal B_{\delta,M_0}\times[-\frac{\delta}{M_0},\frac{\delta}{M_0}]^2\to\mathcal H_R$ is well-defined and for any $(f,g)\in\mathcal B_{\delta,M_0}$, the map $\mathbf U((f,g),\cdot):[-\frac{\delta}{M_0},\frac{\delta}{M_0}]^2\to\mathcal H_R$ is continuous. Furthermore, there exists $\gamma_{t_0}\in\mathbb R\setminus\{0\}$ such that
					\begin{equation}
						\mathbf U((f,g),\alpha,\beta)=\gamma_{t_0}\beta\mathbf f_1^*+\alpha\chi_{t_0}\mathbf f_4^*+\mathbf V((f,g),\alpha,\beta), \label{initial data operator expansion}
					\end{equation}
				and $\mathbf V((f,g),\alpha,\beta)$ satisfies the bound
					$$
						\|\mathbf V((f,g),\alpha,\beta)\|_{\mathcal H_R}\lesssim\frac{\delta}{M_0^2}+|\alpha|^2+|\beta|^2.
					$$
			\end{lemma}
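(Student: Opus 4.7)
The plan is to expand $u_{f,g,\alpha}\circ\eta_{1+\beta}-u_{1+\beta}^{*}\circ\eta_{1+\beta}$ around $\alpha=\beta=0$ on the hyperboloid $\{s=s_0\}$, identify the leading $\alpha$- and $\beta$-contributions with $\alpha\chi_{t_0}\mathbf f_4^{*}$ and $\gamma_{t_0}\beta\mathbf f_1^{*}$ respectively, and bound everything else. The key algebraic simplification is the self-similarity identity $u_T^{*}(t,r)=u_1^{*}(t-(T-1),r)$, which combined with $\eta_T(s,y)=(T+e^{-s}h(y),e^{-s}y)$ yields $u_T^{*}\circ\eta_T=u_1^{*}\circ\eta_1$ pointwise. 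Substituting the decomposition $u_{f,g,\alpha}=u_1^{*}+\alpha u_\text{lin}+\varphi_{f,g,\alpha}$ from Lemma \ref{local adjusted CP} reduces the bracket to
\begin{equation*}
\bigl(u_1^{*}\circ\eta_{1+\beta}-u_1^{*}\circ\eta_1\bigr)+\alpha\,u_\text{lin}\circ\eta_{1+\beta}+\varphi_{f,g,\alpha}\circ\eta_{1+\beta},
\end{equation*}
and analogously for the $\partial_s$-component. By the choice of $s_0$, the hyperboloid $\eta_{1+\beta}(s_0,[0,R])$ sits inside $\Lambda_{t_0}$ for every sufficiently small $|\beta|$, so the compositions are smooth and their restrictions lie in $\mathcal H_R$; joint continuity of $\mathbf U((f,g),\cdot,\cdot)$ on $[-\frac{\delta}{M_0},\frac{\delta}{M_0}]^2$ follows from the smoothness of $\eta_{1+\beta}$ in $\beta$ and the continuous $\alpha$-dependence of $u_{f,g,\alpha}$ supplied by Lemma \ref{local adjusted CP}.

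To extract the $\beta$-mode I would Taylor expand in $\beta$: since $\eta_{1+\beta}(s,y)-\eta_1(s,y)=(\beta,0)$, the first-order term is $\beta\,(\partial_tu_1^{*})\circ\eta_1$. By Remark \ref{Rem:Y} we have $\partial_tu_1^{*}=-\partial_Tu_T^{*}|_{T=1}=432\,F_1^{*}$, and a direct computation using \eqref{Eq:Symm_Mode} together with the definition of $\eta_1$ gives $(F_1^{*}\circ\eta_1)(s,y)=-e^{3s}f_{1,1}^{*}(y)$; the analogous identity for the $\partial_s$-component uses $f_{1,2}^{*}=3f_{1,1}^{*}$. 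Multiplying by the overall $e^{-2s_0}$ produces $\gamma_{t_0}\beta\mathbf f_1^{*}$ with $\gamma_{t_0}=-432\,e^{s_0}\neq0$, up to an $O(\beta^2)$ remainder. For the $\alpha$-mode, Lemma \ref{adjustment term} gives $(u_\text{lin}\circ\eta_1)(s_0,\cdot)=e^{2s_0}\chi_{t_0}f_{4,1}^{*}$ and the corresponding formula with $f_{4,2}^{*}$ for the $\partial_s$-component, so the $e^{-2s_0}$ prefactor reproduces $\alpha\chi_{t_0}\mathbf f_4^{*}$ exactly at $\beta=0$, while a first-order Taylor expansion in $\beta$ contributes an $O(|\alpha||\beta|)$ error. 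The nonlinear piece is estimated using the uniform bound $\|\partial^{\kappa}\varphi_{f,g,\alpha}\|_{L^{\infty}(\Lambda_{t_0})}\lesssim\delta/M_0^2$ for $|\kappa|\leq 6$ from Lemma \ref{local adjusted CP}, which after restriction to the compact slice $\eta_{1+\beta}(s_0,[0,R])$ yields an $\mathcal H_R$-bound of order $\delta/M_0^2$. Assembling these contributions and absorbing $|\alpha||\beta|\lesssim|\alpha|^2+|\beta|^2$ delivers $\|\mathbf V\|_{\mathcal H_R}\lesssim\delta/M_0^2+|\alpha|^2+|\beta|^2$.

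The main technical hurdle is the consistent bookkeeping of the scaling factors $e^{\pm 2s_0}$ and $e^{3s_0}$ inherited from the rescaling $\Phi=e^{-2s}\tilde{\mathbf v}$ of Section \ref{The Quadratic Wave Equation in Hyperboloidal Similarity Coordinates} and from the eigenmode normalizations of Proposition \ref{spectrum}. Once this is organized, the only nonroutine content is showing that the coefficient $\gamma_{t_0}$ of the $\mathbf f_1^{*}$-mode is nonzero, and this is immediate from $\partial_tu_1^{*}=432\,F_1^{*}$ since $F_1^{*}$ is not identically zero; every other contribution is either a routine Taylor expansion or a direct application of Lemmas \ref{local adjusted CP} and \ref{adjustment term}.
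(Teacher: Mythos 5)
Your proposal is correct and follows essentially the same route as the paper: substitute the decomposition $u_{f,g,\alpha}=u_1^*+\alpha u_{\text{lin}}+\varphi_{f,g,\alpha}$ from Lemma~\ref{local adjusted CP}, use the $T$-independence of $u_T^*\circ\eta_T$ (which you make explicit via the translation identity $u_T^*(t,r)=u_1^*(t-(T-1),r)$, a useful clarification), Taylor expand in $\beta$ to extract the $\mathbf f_1^*$-mode, read off the $\mathbf f_4^*$-mode at $\beta=0$ from Lemma~\ref{adjustment term}, and bound the remainder $\mathbf V$ via the $O(\alpha\beta)$, $O(\beta^2)$ Taylor errors and the uniform derivative bound \eqref{uniform control} on $\varphi_{f,g,\alpha}$. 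The only cosmetic difference is that the paper splits $\alpha\, u_{\text{lin}}\circ\eta_{1+\beta}$ explicitly into $\alpha\,u_{\text{lin}}\circ\eta_1$ plus a $\beta$-difference before grouping, whereas you fold the $\beta$-difference directly into $\mathbf V$; and you record the explicit value $\gamma_{t_0}=-432\,e^{s_0}$, while the paper only asserts $\gamma_{t_0}\neq 0$ (the sign plays no role in the statement or its downstream use).
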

			\begin{proof}
				The initial data operator is well-defined since the hyperboloids $\eta_{1+\beta}(\{s_0\}\times[0,R))$, $\beta\in[-\frac{\delta}{M_0},\frac{\delta}{M_0}]$, lie entirely in $\Lambda_{t_0}$ for sufficiently small $\delta$ and sufficiently large $M_0$. Continuity of $\mathbf U((f,g),\cdot)$ follows from $u_{1+\beta}^*\in C^\infty(\Lambda_{t_0})$ and continuous dependence on $\alpha$ of $u_{f,g,\alpha}$. To see the stated expansion of $\mathbf U((f,g),\alpha,\beta)$, we insert the form of $u_{f,g,\alpha}$ and group terms as follows
					\begin{align*}
						\mathbf U((f,g),\alpha,\beta)=&
						e^{-2s}\begin{pmatrix}
							(u_1^*+\alpha u_\text{lin}+\varphi_{f,g,\alpha})\circ\eta_{1+\beta}-u_{1+\beta}^*\circ\eta_{1+\beta}
							\\
							\partial_s((u_1^*+\alpha u_\text{lin}+\varphi_{f,g,\alpha})\circ\eta_{1+\beta})-\partial_s(u_{1+\beta}^*\circ\eta_{1+\beta})
						\end{pmatrix}\Bigg|_{s=s_0}
						\\
						=&e^{-2s}\begin{pmatrix}
							u_1^*\circ\eta_{1+\beta}-u_{1+\beta}^*\circ\eta_{1+\beta}
							\\
							\partial_s(u_1^*\circ\eta_{1+\beta})-\partial_s(u_{1+\beta}^*\circ\eta_{1+\beta})
						\end{pmatrix}\Bigg|_{s=s_0}
						\\
						&+\alpha e^{-2s}\begin{pmatrix}
							u_\text{lin}\circ\eta_{1+\beta}-u_\text{lin}\circ\eta_{1}
							\\
							\partial_s(u_\text{lin}\circ\eta_{1+\beta})-\partial_s(u_\text{lin}\circ\eta_{1})
						\end{pmatrix}\Bigg|_{s=s_0}
						\\
						&+\alpha e^{-2s}\begin{pmatrix}
							u_\text{lin}\circ\eta_{1}
							\\
							\partial_s(u_\text{lin}\circ\eta_{1})
						\end{pmatrix}\Bigg|_{s=s_0}
						\\
						&+e^{-2s}\begin{pmatrix}
							\varphi_{f,g,\alpha}\circ\eta_{1+\beta}
							\\
							\partial_s(\varphi_{f,g,\alpha}\circ\eta_{1+\beta})
						\end{pmatrix}\Bigg|_{s=s_0}.
					\end{align*}
				Now, recall 
					$$
						(u_T^*\circ\eta_T)(s,y)=-\frac{24e^{2s}\big(5y^2-21h(y)^2\big)}{\big(3h(y)^2+5y^2\big)^2}
					$$
				which is clearly independent of $T$. Thus, for the first term we can write
					\begin{align*}
						u_1^*(1+\beta&+e^{-s}h(y),e^{-s}y)-u_{1+\beta}^*(1+\beta+e^{-s}h(y),e^{-s}y)
						\\
						&=\partial_Tu_1^*(T+e^{-s}h(y),e^{-s}y)|_{T=1}\beta+r_1(\beta,s,y)\beta^2
					\end{align*}
				where
					$$
						r_1(\beta,s,y)=\int_0^1\Big(\int_0^1\partial_\beta^2u_1^*(1+\beta xz+e^{-s}h(y),e^{-s}y)dz\Big)xdx.
					$$
				We have that $r_1$ is smooth and bounded since $u_1^*\in C^\infty(\Lambda_{t_0})$. Furthermore, recalling $e^{-2s}(\partial_Tu_T^*\circ\eta_T)|_{T=1}(s,y)=432e^sf_{1,1}^*(y)$ along with a similar claim for the $s$ derivative yields the first term in the stated expansion. For the second term, we can write
					$$
						u_\text{lin}(1+\beta+e^{-s}h(y),e^{-s}y)-u_\text{lin}(1+e^{-s}h(y),e^{-s}y)=r_2(\beta,s,y)\beta
					$$
				where
					$$
						r_2(\beta,s,y)=\int_0^1\partial_\beta u_\text{lin}(1+\beta x+e^{-s}h(y),e^{-s}y)dx
					$$
				which is smooth and bounded since  $u_\text{lin}\in C^\infty(\Lambda_{t_0})$. Recalling 
					$$
						e^{-2s}\begin{pmatrix}
							u_\text{lin}\circ\eta_{1}
							\\
							\partial_s(u_\text{lin}\circ\eta_{1})
						\end{pmatrix}\Bigg|_{s=s_0}
						=\chi_{t_0}\mathbf f_4^*
					$$
				yields the second claimed term in the expansion. $O(\alpha\beta)$ and $O(\beta^2)$ terms are obtained in $\mathbf V$ from $r_1$ and $r_2$ along with their $s$ derivatives. Lastly, the $O(\frac{\delta}{M_0^2})$ term in $\mathbf V$ follows from Inequality \eqref{uniform control}.
			\end{proof}
		
		\subsection{Hyperboloidal Evolution}
			At this point, we are ready to continue the evolution of the data we began to evolve in Section \ref{Evolution of Physical Initial Data}. We achieve this by evolving the data $\mathbf U\big((f,g),\alpha,\beta\big)$ according to the nonlinear theory developed in Proposition \ref{mod wp}. By a fixed point argument, we will show that there is at least one choice of $\alpha$ and $\beta$ for which the correction term vanishes. In other words, there is at least one choice of $\alpha$ and $\beta$ for which evolving $\mathbf U\big((f,g),\alpha,\beta\big)$ according to Equation \eqref{modified Duhamel} is equivalent to that of the quadratic wave equation.
			
			\begin{proposition} \label{hyperboloidal evolution}
				For all sufficiently large $M_0>0$, there exists $\delta>0$ such that for any pair $(f,g)\in\mathcal B_{\delta,M_0}$, there exists $(\alpha_{f,g},\beta_{f,g})\in[-\frac{\delta}{M_0},\frac{\delta}{M_0}]^2$ and a unique $\Phi_{f,g}\in C([s_0,\infty),\mathcal H_R)$ that satisfies
					$$
						\Phi_{f,g}(s)=\mathbf S(s-s_0)\mathbf U((f,g),\alpha_{f,g},\beta_{f,g})+\int_{s_0}^s\mathbf S(s-s')\mathbf N(\Phi_{f,g}(s'))ds', \label{Phifg eqn}
					$$
				and $\|\Phi_{f,g}(s)\|_{\mathcal H_R}\leq\delta e^{-\omega_0s}$ for all $s\geq s_0$.
			\end{proposition}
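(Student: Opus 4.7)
The plan is as follows. First, for each choice of parameters $(\alpha, \beta) \in [-\delta/M_0, \delta/M_0]^2$ (with $(f,g) \in \mathcal B_{\delta,M_0}$ fixed), I apply a time-shifted version of Proposition \ref{mod wp} to the initial data $\mathbf U((f,g), \alpha, \beta)$, which by Lemma \ref{ID operator} has norm $\lesssim \delta/M_0$ in $\mathcal H_R$. For $M_0$ large enough, this lies in the required smallness ball, so Proposition \ref{mod wp} produces a unique $\Phi_{\alpha,\beta} \in C([s_0, \infty), \mathcal H_R)$ satisfying the modified integral equation
\[ \Phi_{\alpha,\beta}(s) = \mathbf S(s-s_0)\bigl[\mathbf U((f,g),\alpha,\beta) - \widetilde{\mathbf C}(\Phi_{\alpha,\beta}, \mathbf U((f,g),\alpha,\beta))\bigr] + \int_{s_0}^s \mathbf S(s-s')\mathbf N(\Phi_{\alpha,\beta}(s'))\,ds', \]
with $\|\Phi_{\alpha,\beta}(s)\|_{\mathcal H_R} \leq \delta e^{-\omega_0(s-s_0)}$, where $\widetilde{\mathbf C}$ is the analogue of $\mathbf C$ adapted to the starting time $s_0$. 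Composing the Lipschitz solution map of Proposition \ref{mod wp} with the continuous dependence of $\mathbf U$ on $(\alpha,\beta)$ from Lemma \ref{ID operator} yields continuous (in fact Lipschitz) dependence of $\Phi_{\alpha,\beta}$ on $(\alpha,\beta)$.

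The proposition requires $\Phi_{f,g}$ to satisfy the \emph{unmodified} integral equation, so I must select $(\alpha,\beta)$ so that $\widetilde{\mathbf C}(\Phi_{\alpha,\beta}, \mathbf U((f,g),\alpha,\beta)) = \mathbf 0$. Since $\widetilde{\mathbf C}$ takes values in $\range \mathbf P_1 \oplus \range \mathbf P_4 = \langle \mathbf f_1^*\rangle \oplus \langle \mathbf f_4^*\rangle$, this reduces to a two-dimensional equation $(\mathcal F_1(\alpha,\beta), \mathcal F_4(\alpha,\beta)) = (0,0)$. Using the expansion $\mathbf U = \gamma_{t_0}\beta\, \mathbf f_1^* + \alpha\, \chi_{t_0}\mathbf f_4^* + \mathbf V$ from Lemma \ref{ID operator}, the transversality identity $\mathbf P_4 \mathbf f_1^* = 0$, and the crucial nonvanishing from Lemma \ref{nonvanishing of the Riesz projection}, namely $\mathbf P_4(\chi_{t_0}\mathbf f_4^*) = c\,\mathbf f_4^*$ with $c \neq 0$, together with Lemma \ref{nonlinear bound} to control the integrated nonlinear tail, I expect
\[ \mathcal F_1(\alpha, \beta) = \gamma_{t_0}\beta + a_1 \alpha + R_1(\alpha,\beta), \qquad \mathcal F_4(\alpha,\beta) = c\,\alpha + R_4(\alpha,\beta), \]
where $a_1 \in \mathbb R$ is defined by $\mathbf P_1(\chi_{t_0}\mathbf f_4^*) = a_1 \mathbf f_1^*$, and $R_1, R_4$ collect contributions of size $O(\delta/M_0^2 + \delta^2)$ with small Lipschitz constants in $(\alpha,\beta)$.

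The final step is to solve this triangular system by recasting it as the fixed-point problem
\[ \alpha = -c^{-1} R_4(\alpha,\beta), \qquad \beta = -\gamma_{t_0}^{-1}\bigl(a_1 \alpha + R_1(\alpha,\beta)\bigr), \]
on $[-\delta/M_0, \delta/M_0]^2$. For $M_0$ large and $\delta$ small, the a priori bounds on $R_1, R_4$ make the right-hand side map the square into itself, and the Lipschitz bounds inherited from Proposition \ref{mod wp} and Lemma \ref{ID operator} make it a contraction; Banach's fixed-point theorem then provides a unique $(\alpha_{f,g}, \beta_{f,g})$, and $\Phi_{f,g} := \Phi_{\alpha_{f,g},\beta_{f,g}}$ inherits the stated decay and satisfies the unmodified Duhamel equation. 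The main obstacle is arranging the decoupling: the argument hinges on the two nondegeneracy facts $\gamma_{t_0} \neq 0$ (from the time-translation symmetry mode, captured by the blowup-time correction $\beta$) and $c \neq 0$ (from Lemma \ref{nonvanishing of the Riesz projection}, which is precisely why $\chi_{t_0}$ was designed with support in $(0, \delta_0)$ where $F > 0$); without either, the upper-triangular structure above breaks down and the fixed-point argument cannot close.
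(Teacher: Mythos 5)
Your overall strategy matches the paper's: invoke the modified fixed-point theory of Proposition~\ref{mod wp} on the initial data $\mathbf U((f,g),\alpha,\beta)$, reduce the vanishing of the correction term to a two-dimensional equation for $(\alpha,\beta)$, use the expansion from Lemma~\ref{ID operator} and the nondegeneracies $\gamma_{t_0}\neq 0$ and $(\mathbf P_4(\chi_{t_0}\mathbf f_4^*)|\mathbf f_4^*)_{\mathcal H_R}\neq 0$ to isolate the leading triangular part, and close with a fixed-point argument.

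There is, however, a genuine gap in the final step. You invoke Banach's fixed-point theorem, which requires a contraction, and to get that you assert that ``composing the Lipschitz solution map of Proposition~\ref{mod wp} with the continuous dependence of $\mathbf U$ on $(\alpha,\beta)$ from Lemma~\ref{ID operator} yields continuous (in fact Lipschitz) dependence.'' That parenthetical claim does not follow: composing a Lipschitz map with a merely continuous one is only continuous, and Lemma~\ref{ID operator} establishes \emph{only} continuity of $(\alpha,\beta)\mapsto\mathbf U((f,g),\alpha,\beta)$, not a Lipschitz bound (the dependence on $\alpha$ runs through the nonlinear solution $u_{f,g,\alpha}$ of Lemma~\ref{local adjusted CP}, for which only continuous dependence is claimed). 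Consequently the Lipschitz constant of $(\alpha,\beta)\mapsto(R_1,R_4)(\alpha,\beta)$ is not controlled, and the contraction estimate you need does not close from the stated lemmas. The paper's proof avoids this exactly by using Brouwer's fixed-point theorem in place of Banach's: since the target is a compact convex square in $\mathbb R^2$, continuity of $\phi_{f,g}$ together with the a priori smallness $|\mathbf A_{t_0}^{-1}\phi_{f,g}|\lesssim \delta/M_0^2+\delta^2$ suffices to map the square into itself and produce a fixed point. If you want to keep Banach (which would additionally yield uniqueness of $(\alpha_{f,g},\beta_{f,g})$, a statement the proposition does not require), you would first have to strengthen Lemmas~\ref{local adjusted CP} and \ref{ID operator} to establish Lipschitz dependence on $\alpha$ and $\beta$, which is additional work the paper deliberately sidesteps.
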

			\begin{proof}
				First, let $\delta>0$ be sufficiently small and $M_0>0$ sufficiently large so that the initial data operator is well-defined. Furthermore, the expansion of the initial data operator, Equation \eqref{initial data operator expansion}, implies that $\big\|\mathbf U\big((f,g),\alpha,\beta)\big)\big\|_{\mathcal H_R}\lesssim\frac{\delta}{M_0}$ for all $(f,g)\in\mathcal B_{\delta,M_0}$ and $(\alpha,\beta)\in[-\frac{\delta}{M_0},\frac{\delta}{M_0}]^2$. Thus, we require $M_0\gtrsim c$ so that $\big\|\mathbf U\big((f,g),\alpha,\beta)\big)\big\|_{\mathcal H_R}\leq\frac{\delta}{c}$ for $\delta$ and $c$ as in Proposition \ref{mod wp}. Thus, for $(f,g)\in\mathcal B_{\delta,M_0}$ and $(\alpha,\beta)\in[-\frac{\delta}{M_0},\frac{\delta}{M_0}]^2$, Proposition \ref{mod wp} implies the existence of a unique $\Phi_{f,g,\alpha,\beta}\in C([s_0,\infty),\mathcal H_R)$ that satisfies
					\begin{align*}
						\Phi_{f,g,\alpha,\beta}(s)=&\mathbf S(s-s_0)\bigg[\mathbf U\big((f,g),\alpha,\beta\big)-\mathbf C\Big(\Phi_{f,g,\alpha,\beta},\mathbf U\big((f,g),\alpha,\beta\big)\Big)\bigg]
						\\
						&+\int_{s_0}^s\mathbf S(s-s')\mathbf N(\Phi_{f,g,\alpha,\beta}(s'))ds'
					\end{align*}
				with the stated decay. If $\mathbf C\Big(\Phi_{f,g,\alpha,\beta},\mathbf U\big((f,g),\alpha,\beta\big)\Big)=\mathbf0$, then we are done. To this end, define the function $\Gamma_{f,g}:[-\frac{\delta}{M_0},\frac{\delta}{M_0}]^2\to\mathbb R^2$ by $\Gamma_{f,g}=\Big(\Gamma^{(1)}_{f,g},\Gamma^{(4)}_{f,g}\Big)$ where
					$$
						\Gamma^{(1)}_{f,g}(\alpha,\beta)=\bigg(\mathbf C_1\Big(\Phi_{f,g,\alpha,\beta},\mathbf U\big((f,g),\alpha,\beta\big)\Big)\bigg|\mathbf f_1^*\bigg)_{\mathcal H_R}
					$$
					$$
						\Gamma^{(4)}_{f,g}(\alpha,\beta)=\bigg(\mathbf C_4\Big(\Phi_{f,g,\alpha,\beta},\mathbf U\big((f,g),\alpha,\beta\big)\Big)\bigg|\mathbf f_4^*\bigg)_{\mathcal H_R}
					$$
				Proposition \ref{mod wp} and continuity of the initial data operator implies continuity of $\Gamma_{f,g}$. According to the expansion of the initial data operator and transversality of the Riesz projections, there exists a nonzero constant $\tilde\gamma_{t_0}$ such that
					$$
						\Gamma^{(1)}_{f,g}(\alpha,\beta)=\tilde\gamma_{t_0}\beta+\alpha(\mathbf P_1(\chi_{t_0}\mathbf f_4^*)|\mathbf f_1^*)_{\mathcal H_R}+\phi^{(1)}_{f,g}(\alpha,\beta)
					$$
				and
					$$
						\Gamma^{(4)}_{f,g}(\alpha,\beta)=\alpha(\mathbf P_4(\chi_{t_0}\mathbf f_4^*)|\mathbf f_4^*)_{\mathcal H_R}+\phi^{(4)}_{f,g}(\alpha,\beta)
					$$
				where $\phi_{f,g}=\Big(\phi^{(1)}_{f,g}(\alpha,\beta),\phi^{(4)}_{f,g}(\alpha,\beta)\Big):[-\frac{\delta}{M_0},\frac{\delta}{M_0}]^2\to\mathbb R^2$ is continuous and satisfies the estimate $|\phi_{f,g}|\lesssim\frac{\delta}{M_0^2}+\delta^2$.
			
				The equation $\Gamma_{f,g}(\alpha,\beta)=0$ is equivalent to the existence of a fixed point of the map $(\alpha,\beta)\mapsto\mathbf A_{t_0}^{-1}\phi_{f,g}(\alpha,\beta)$ where 
					$$
						\mathbf A_{t_0}=-\begin{pmatrix}(\mathbf P_1(\chi_{t_0}\mathbf f_4^*)|\mathbf f_1^*)_{\mathcal H_R}&\tilde\gamma_{t_0}\\(\mathbf P_4(\chi_{t_0}\mathbf f_4^*)|\mathbf f_4^*)_{\mathcal H_R}&0\end{pmatrix}.
					$$ 
				This matrix is invertible since $\tilde\gamma_{t_0},(\mathbf P_4(\chi_{t_0}\mathbf f_4^*)|\mathbf f_4^*)_{\mathcal H_R}\neq0$, the second following from Lemma \ref{nonvanishing of the Riesz projection}. Denoting by $\|\cdot\|_{M_2(\mathbb C)}$ the matrix norm on $M_2(\mathbb C)$, we have that
					\begin{align*}
						|\mathbf A_{t_0}^{-1}\phi_{f,g}(\alpha,\beta)|&\leq\|\mathbf A_{t_0}^{-1}\|_{M_2(\mathbb C)}|\phi_{f,g}(\alpha,\beta)|
						\\
						&\lesssim\Big(\frac{\delta}{M_0^2}+\delta^2\Big).
					\end{align*}
				Thus, for $M_0$ sufficiently large, one can take $\delta\lesssim\frac{1}{2M_0}$ in order to show that this map sends $[-\frac{\delta}{M_0},\frac{\delta}{M_0}]^2$ to itself. Thus, the Brouwer fixed point theorem implies the existence of a fixed point.
			\end{proof}
		
		\subsection{Proof of the Main Result}
			Fix $M_0>0$ sufficiently large and let $\delta>0$ be as in Proposition \ref{hyperboloidal evolution}. By Proposition \ref{hyperboloidal evolution}, we have that for all $(f,g)\in\mathcal B_{\delta,M_0}$, there exist $(\alpha,\beta)\in[-\frac{\delta}{M_0},\frac{\delta}{M_0}]^2$ and a unique $\Phi=(\phi_1,\phi_2)\in C([s_0,\infty),\mathcal H_R)$ solving Equation \eqref{Phifg eqn}. Observe that 
				$$
					\mathbf U((f,g),\alpha,\beta):=e^{-2s}
					\begin{pmatrix}
						u_{f,g,\alpha}\circ\eta_{1+\beta}-u_{1+\beta}^*\circ\eta_{1+\beta}
						\\
						\partial_s(u_{f,g,\alpha}\circ\eta_{1+\beta})-\partial_s(u_{1+\beta}^*\circ\eta_{1+\beta})
					\end{pmatrix}\Bigg|_{s=s_0}
				$$
			belongs to $C^\infty(\mathbb B_R^7)^2$. Thus, an inductive argument analogous to the proof of Lemma \ref{adjustment term} shows that $\Phi$ is indeed smooth and a classical solution of Equation \eqref{NL eqn}. By setting $T=1+\beta$, we see that the function $\tilde u$ defined by the equation
				$$
					(\tilde u\circ\eta_T)(s,y)=(u_T^*\circ\eta_T)(s,y)+e^{2s}\phi_1(s)(y)
				$$
			solves Equation \eqref{qwe} on $\eta_T([s_0,\infty)\times[0,R))$ and further satisfies
				$$
					\partial_s(\tilde u\circ\eta_T)(s,y)=\partial_s(u_T^*\circ\eta_T)(s,y)+e^{2s}\phi_2(s)(y).
				$$
			Setting $u(t,x)=\tilde u(t,|x|)$, we infer by uniqueness in Lemma \ref{local adjusted CP} and finite speed of propagation, Theorem 2.12 of \cite{BDS19} for instance, that $u\in C^\infty(\Omega_{T,R})$ and solves the Cauchy problem \eqref{qwe adjusted CP}. Lastly, the stated convergence follows from the decay of $\Phi_{f,g}$.
	
	\appendix	
		\section{Explicit Expressions for Proposition \ref{spectrum}} \label{explicit}
			$C_n(\lambda)=\frac{P_1(n,\lambda)}{P_2(n,\lambda)}$ and $\varepsilon_n(\lambda)=\frac{P_3(n,\lambda)}{P_2(n,\lambda)}$ where
				$$
					P_1(n,\lambda)=-845000000 n^2 (n+1)^3 (2 n+11) (\lambda +2 n+2) (\lambda +2 n+8),
				$$
				\begin{align*}
					P_2(n,\lambda)=&\big(1287\lambda ^2+52 (192 \lambda ^2+4125 \lambda +9500) n^2+2000 (48 \lambda+299) n^3+104000 n^4
					\\
					&+\lambda  (1521 \lambda -44000) n\big)\big(52 (246\lambda ^2+5125 \lambda +23000)
					\\
					&+4 (2496 \lambda ^2+125625 \lambda+728000) n^2
					\\
					&+6000 (16 \lambda +169) n^3+104000 n^4+(21489 \lambda ^2+673000\lambda +3198000) n\big),
				\end{align*}
			and
				\begin{align*}
					P_3(n,\lambda)=&-33462 \lambda ^2 (117 \lambda ^2-4000 \lambda -4000)-4000 (74344 \lambda^2+1012375 \lambda +14196000) n^6
					\\
					&+1000 (57408 \lambda ^3-1058203 \lambda^2-34820500 \lambda -237276000) n^5
					\\
					&-8 (292032 \lambda ^4-66226875 \lambda^3-854777500 \lambda ^2+11226312500 \lambda +52728000000) n^4
					\\
					&-10 (2021409\lambda ^4-113476350 \lambda ^3-1831007400 \lambda ^2
					\\
					&\;\;\;\;\;\;+9749350000 \lambda+33360600000) n^3
					\\
					&-5 (6739551 \lambda ^4-166553400 \lambda ^3-1369066800\lambda ^2
					\\
					&\;\;\;\;\;\;+8543600000 \lambda +19468800000) n^2
					\\
					&+78000000 (4 \lambda -65) n^7-325\lambda  (22113 \lambda ^3-1579680 \lambda ^2
					\\
					&\;\;\;\;\;\;+11482600 \lambda +14080000) n.
				\end{align*}
			Also, $\delta_5(\lambda)=\frac{R_1(\lambda)}{R_2(\lambda)}$ where
				\begin{align*}
					R_1(\lambda)=&-597051 \lambda ^{12}-43222410 \lambda ^{11}+5068245600 \lambda ^{10}+633420595440\lambda ^9
					\\
					&+23910688879632 \lambda ^8+308544639036000 \lambda ^7-3181221429731200\lambda ^6
					\\
					&-155692128689456640 \lambda ^5-2167560072357216256 \lambda^4-15251720333529661440 \lambda ^3
					\\
					&-55976373542617907200 \lambda^2-95372978774016000000 \lambda -51994908426240000000
				\end{align*}
			and
				\begin{align*}
					R_2(\lambda)=&2 (64623 \lambda ^2+4285625 \lambda +38025000) 
					\\
					&\times\big(81 \lambda ^{10}+19710\lambda ^9+1886400 \lambda ^8+92781360 \lambda ^7+2577603408 \lambda ^6+41940364000\lambda ^5
					\\
					&+401332867200 \lambda ^4+2206815715840 \lambda ^3+6537890727936 \lambda^2+8994221424640 \lambda 
					\\
					&+3845731123200\big).
				\end{align*}	
	\section{The wave propagators in light cones }\label{Cauchy_Theory}

Recall that the solution of the Cauchy problem
				$$
					\begin{cases}
						\big(\partial_t^2-\Delta_x\big)u(t,x)=0&(t,x)\in\mathbb R^{1+d}
						\\
						u[0](x)=\big(f(x),g(x)\big)&x\in\mathbb R^d
					\end{cases}
				$$
			is given by the wave propagators
				$$
					u(t,\cdot)=\cos(t|\nabla|)f+\frac{\sin(t|\nabla|)}{|\nabla|}g
				$$
			for $f,g\in\mathcal S(\mathbb R^d)$ where $\phi(|\nabla|)f=\mathcal F^{-1}(\phi(|\cdot|)\mathcal Ff)$ for $\phi\in C(\mathbb R)$. In backward light cones, they satisfy the following estimates.
				\begin{proposition} \label{finite speed of propagation}
					Let $d\geq3$. Then there exists a continuous function $\gamma_d:[0,\infty)\to[1,\infty)$ such that
	\begin{align*}
							\big\|\partial_t^\ell\cos(t|\nabla|)f\big\|_{\dot{H}^k(\mathbb B_{T-t}^d)} & \leq\|f\|_{\dot{H}^{k+\ell}(\mathbb B_{T}^d)} \\
								\bigg\|\partial_t^\ell\frac{\sin(t|\nabla|)}{|\nabla|}f\bigg\|_{\dot{H}^k(\mathbb B_{T-t}^d)} & \leq\|f\|_{\dot{H}^{k+\ell-1}(\mathbb B_{T}^d)},
	\end{align*}
as well as 					
	\begin{align*}
							\big\|\partial_t^\ell\cos(t|\nabla|)f\big\|_{L^2(\mathbb B_{T-t}^d)} & \leq\gamma_d(T)\|f\|_{\dot{H}^{1+\ell}(\mathbb B_{T}^d)}, \\
							\bigg\|\partial_t^\ell\frac{\sin(t|\nabla|)}{|\nabla|}f\bigg\|_{L^2(\mathbb B_{T-t}^d)} & \leq\gamma_d(T)\|f\|_{\dot{H}^{\ell}(\mathbb B_{T}^d)},
	\end{align*}
					for all $f\in\mathcal S(\mathbb R^d)$, $T>0$, $t\in[0,T)$, $k\in\mathbb N$, and $\ell\in\mathbb N_0$.
				\end{proposition}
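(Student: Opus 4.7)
The plan is to prove this via classical local-energy estimates for the free wave equation in backward light cones, combined with Sobolev embedding on $\mathbb R^d$ to handle the $L^2$ bound. The cornerstone is the divergence identity: for a smooth solution $u$ of $(\partial_t^2 - \Delta)u = 0$, the energy density $e = \tfrac12(|\partial_t u|^2 + |\nabla u|^2)$ and the momentum vector $p = -(\partial_t u)\nabla u$ satisfy $\partial_t e + \nabla\cdot p = 0$. Integrating this over the truncated backward light cone $\{(s,y):0 \leq s \leq t,\ |y| \leq T-s\}$ and applying the divergence theorem, the flux through the null lateral boundary is pointwise non-negative by a Cauchy--Schwarz computation, giving
\[ \|u(t,\cdot)\|_{\dot H^1(\mathbb B_{T-t}^d)}^2 + \|\partial_t u(t,\cdot)\|_{L^2(\mathbb B_{T-t}^d)}^2 \leq \|u(0,\cdot)\|_{\dot H^1(\mathbb B_T^d)}^2 + \|\partial_t u(0,\cdot)\|_{L^2(\mathbb B_T^d)}^2. \]

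For the first two families of estimates (those on $\dot H^k$ norms), I apply this to $u = \cos(t|\nabla|)f$ and $u = \frac{\sin(t|\nabla|)}{|\nabla|}g$, whose initial data are $(f, 0)$ and $(0, g)$ respectively. Since spatial derivatives commute with the wave operator, the estimate extends to arbitrary $k \geq 1$ by applying it to $\partial^\beta u$ for spatial multi-indices $\beta$ with $|\beta| = k - 1$ and summing. Time derivatives are converted to spatial ones via the identities $\partial_t^{2m} u = \Delta^m u$ and $\partial_t^{2m+1} u = \Delta^m \partial_t u$, valid on solutions of the free equation, which at $t = 0$ transfer the $\ell$ derivatives onto the initial datum, producing the $\dot H^{k+\ell}$ norms on the right-hand side.

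The main obstacle, and where both the restriction $d \geq 3$ and the $T$-dependent constant $\gamma_d(T)$ enter, is the $L^2$ estimate. For $\ell = 0$ I would combine the Sobolev embedding $\dot H^1(\mathbb R^d) \hookrightarrow L^{2d/(d-2)}(\mathbb R^d)$ (which requires $d \geq 3$) with H\"older's inequality on $\mathbb B_{T-t}^d$ to obtain $\|u(t,\cdot)\|_{L^2(\mathbb B_{T-t}^d)} \lesssim (T-t)\|u(t,\cdot)\|_{\dot H^1(\mathbb R^d)}$, then use global energy conservation to bound the latter by $\|f\|_{\dot H^1(\mathbb R^d)}$. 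To replace $\mathbb R^d$ by $\mathbb B_T^d$ on the right, I would invoke finite speed of propagation: modifying $f$ outside $\mathbb B_T^d$ does not affect $u$ in the truncated cone, so I can work with a Schwartz extension of $f|_{\mathbb B_T^d}$ whose $\dot H^1(\mathbb R^d)$ norm is controlled through standard trace/extension theory for the ball, producing a constant $\gamma_d(T)$ that depends continuously on $T$. The higher-$\ell$ $L^2$ estimates then reduce to the $\ell = 0$ case via the same time-to-space derivative conversion already used above.
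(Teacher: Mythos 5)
The paper does not prove this result; it cites \cite{BDS19} and moves on, so there is no in-paper argument to compare against. Your overall plan --- the flux identity for the energy density integrated over a truncated backward light cone, commuting with derivatives for higher $k$ and $\ell$, and Sobolev plus extension for the $L^2$ layer --- is the standard one, but two steps do not survive scrutiny. For the $\dot{H}^k$ estimates you cannot get constant $1$ by converting $\partial_t^{2m}$ to $\Delta^m$ and then estimating $\|\Delta^m f\|_{\dot{H}^k(\mathbb B_T^d)}$ by $\|f\|_{\dot{H}^{k+2m}(\mathbb B_T^d)}$: on the Fourier side $|\xi|^{4m}\geq\sum_{|\kappa|=2m}\xi^{2\kappa}$, so on $\mathbb R^d$ the reverse inequality holds, and on a ball Cauchy--Schwarz only gives a $(d,m)$-dependent constant. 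Likewise, summing the flux bound over spatial $\beta$ with $|\beta|=k-1$ produces $\sum_{|\kappa|=k}\#\{j:\kappa_j\geq1\}\,\|\partial^\kappa u\|^2$ rather than $\|u\|_{\dot{H}^k}^2$, again a $(d,k)$-dependent factor. You therefore obtain a constant $C(d,k,\ell)$ uniform in $T$ and $t$ --- which is all the paper actually uses downstream --- but not the literal constant $1$ asserted in the statement. The more substantive gap is in the $L^2$ step: there is no extension of $f|_{\mathbb B_T^d}$ to a Schwartz $\tilde f$ with $\|\tilde f\|_{\dot{H}^1(\mathbb R^d)}\lesssim_T\|f\|_{\dot{H}^1(\mathbb B_T^d)}$, since the right side vanishes whenever $f$ is constant on the ball while the left side cannot vanish for a nonzero Schwartz function. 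Extension operators control only the inhomogeneous $H^1(\mathbb B_T^d)$ norm, so an $L^2(\mathbb B_T^d)$ term necessarily survives on the right; indeed, read literally, the stated $L^2$ bound for $\cos(t|\nabla|)f$ at $\ell=0$ fails on a Schwartz $f$ that equals $1$ on $\mathbb B_T^d$, so this is an imprecision in the statement itself that your proof should not try to reproduce.

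A cleaner, dimension-independent route to the $L^2$ layer avoids Sobolev embedding and extensions entirely: write $\partial_t^\ell u(t) = \partial_t^\ell u(0) + \int_0^t\partial_s^{\ell+1}u(s)\,ds$, take $L^2(\mathbb B_{T-t}^d)$ norms using $\mathbb B_{T-t}^d\subset\mathbb B_{T-s}^d$ for $s\leq t$, and bound $\|\partial_s^{\ell+1}u(s)\|_{L^2(\mathbb B_{T-s}^d)}$ by the flux identity applied to $\partial_s^\ell u$. Minkowski's integral inequality then yields $\gamma_d(T)\lesssim 1+T$ with no restriction on $d$, and produces $\|f\|_{H^{1+\ell}(\mathbb B_T^d)}$ on the right, which is exactly what is invoked in the fixed-point arguments later in the paper.
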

For the proof, we refer the reader to \cite{BDS19}. By density, these estimates hold for data in homogeneous Sobolev spaces as well. 
%\nocite{*}
% \singlespacing
% \bibliographystyle{alpha}
\bibliographystyle{plain}
\bibliography{qwe_bibfile}

\end{document}